\theoremstyle{definition}
\newtheorem{theorem}{Theorem}
\newtheorem{definition}[theorem]{Definition}
\newtheorem{corollary}[theorem]{Corollary}
\newtheorem{proposition}[theorem]{Proposition}
\newtheorem{example}[theorem]{Example}
\newtheorem{lemma}[theorem]{Lemma}
\newtheorem{observation}[theorem]{Observation}
\newtheorem{fact}[theorem]{Fact}
\newtheorem{question}[theorem]{Open Question}
\newcommand{\uint}{{[0, 1]}}
\newcommand{\Cantor}{{\{0,1\}^\mathbb{N}}}
\newcommand{\Baire}{\mathbb{N}^\mathbb{N}}
\newcommand{\hide}[1]{}
\newcommand{\Varf}{\text{Var}_f}
\newcommand{\KO}{\ensuremath{\mathcal{O}}}
\newcommand{\CK}{\omega_1^\mathrm{CK}}
\newcommand{\cdf}{\operatorname{cdf}}
\newcommand{\uhr}{\upharpoonright}
\newcommand{\concentrate}{\operatorname{Concentrate}}
\definecolor{greenish}{rgb}{0.0,0.6,0.2}
\definecolor{lightgreenish}{rgb}{0.0,0.8,0.2667}
\definecolor{lightblueish}{rgb}{0.85,0.95,1}
\begin{document}

\title{Luzin's (N) and randomness reflection}

\author{
Arno Pauly
\institute{Department of Computer Science\\Swansea University\\Swansea, UK\\}
\email{Arno.M.Pauly@gmail.com}
\and
Linda Westrick
\institute{Department of Mathematics\\Penn State University\\University Park, PA, US\\}
\email{westrick@psu.edu}
\and
Liang Yu
\institute{Mathematical Department\\Nanjing University\\Nanjing City, China\\}
\email{yuliang.nju@gmail.com}
}

\def\titlerunning{Randomness reflection}
\def\authorrunning{A.~Pauly, L.~Westrick \& L.~Yu }
\maketitle

\begin{abstract}
{We show that a computable function $f:\mathbb R\rightarrow\mathbb R$ has Luzin's property (N) if
and only if it reflects $\Pi^1_1$-randomness, if
and only if it reflects $\Delta^1_1(\KO)$-randomness,
and if and only if it reflects $\KO$-Kurtz randomness,
but reflecting Martin-L\"of randomness or
weak-2-randomness does not suffice.
Here a function $f$ is said to reflect a randomness notion
$R$ if whenever $f(x)$ is $R$-random, then $x$
is $R$-random as well.  If additionally $f$ is known to have
bounded variation, then we show $f$ has Luzin's (N)
if and only if it reflects weak-2-randomness, and if
and only if it reflects $\emptyset'$-Kurtz randomness.}
This links classical real analysis with algorithmic randomness.
\end{abstract}

\section{Introduction}
We revisit a notion from classic real analysis, namely Luzin's property (N), from the perspective of computability theory. A function $f : \mathbb{R} \to \mathbb{R}$ has Luzin's (N), if the image of any (Lebesgue) null set under $f$ has again measure $0$.
{This concept was studied extensively by Luzin in his thesis}
\cite{Luzin1951}.
For functions with bounded variation, this notion is just equivalent to absolutely continuous functions -- but already for general continuous functions, Luzin's (N) is a somewhat intricate property. A formal result amounting to this was obtained by Holick\'y, Ponomarev, Zaj\'j\v{c}ek and Zelen\'y, showing that the set of functions with Luzin's (N) is $\Pi^1_1$-complete in the space of continuous functions \cite{holicky}.

From a computability-theoretic perspective, Luzin's (N) is readily seen to be some kind of randomness reflection: By contraposition, it states that whenever $f[A]$ has positive measure (i.e.~contains a random point for a suitable notion of randomness), then $A$ has positive measure, too (i.e.~contains a random point). It thus seems plausible that for some suitable randomness notion, Luzin's (N) for computable functions is equivalent to saying that whenever $f(x)$ is random, then so is $x$. Our main finding (Theorem \ref{theo:main}) is that this is indeed the case, and that $\Pi^1_1$-randomness is such a suitable randomness notion. An indication that this is a non-trivial result is that our proof uses ingredients such as Friedman's conjecture (turned into a theorem by Martin \cite{friedman3,martin4,yuliang3}).

While the exploration of how randomness interacts with function application, and the general links to real analysis, has a long tradition (see e.g.~the survey by Rute \cite{rute}), the concepts of randomness preservation (if $x$ is random, so is $f(x)$) and no-randomness-from-nothing (if $y$ is random, then there is some random $x \in f^{-1}(y)$) have received far more attention than randomness reflection. Our results not only fill this gap, but may shed a light on why randomness reflection has been less popular: As the most natural notion of randomness reflection turns out to be $\Pi^1_1$-randomness reflection, we see that studying higher randomness is essential for this endeavour.

Our theorems and proofs generally refer to computability. However, we stress that since the results relativize, one can obtain immediate consequences in classic real analysis. An example of this is Corollary \ref{corr:banach}, which recovers a theorem by Banach. More such examples  can be found in Section {8}, where, by applying  relativized computability method,  we are able to prove some results in classical analysis. While we are not aware of such consequences that would advance the state of the art in real analysis, it is {plausible} that future use of our techniques could accomplish this.

\paragraph*{Overview of our paper}
In Section \ref{sec:higherrandomness} we do not discuss randomness reflection at all, but rather prove a result in higher randomness of independent interest. Theorem \ref{theo:milleryu} is of the form ``if a somewhat random $X$ is hyp-computed by a very random $Y$, then $X$ is already very random". It is the higher randomness analog of \cite[Theorem 4.3]{yuliang} by Miller and the third author. This result is a core ingredient of our main theorem.

Section \ref{sec:mlr} contains the main theorem of our paper, the equivalence of Luzin's (N) for computable functions with $\Pi^1_1$-randomness reflection and with $\Delta^1_1(\KO)$-randomness reflection. We consider higher Kurtz randomness in Section \ref{sec:kurtz}, and show that for continuous functions $f:\mathbb R\rightarrow \mathbb R$, Luzin's (N) is equivalent to the reflection of $\KO$-Kurtz randomness, and separate this from $\Delta^1_1$-Kurtz randomness reflection.
In Subsection \ref{subsec:questions} we discuss the open questions raised by our main theorem: Just because Luzin's (N) is equivalent to reflection of several higher randomness notions does not mean that it cannot be also equivalent to randomness reflection for some ``lower'' notions. For Martin-L\"of-randomness reflection and weak-2-randomness reflection, we provide a separation from Luzin's (N) in Section \ref{sec:separating}.

In Sections \ref{sec:boundedvariation} and \ref{sec:measures} we consider Luzin's (N) for more restricted classes of functions, namely functions with bounded variation and strictly increasing functions. Here Luzin's (N) turns out to be equivalent to weak-2-randomness reflection, but we can still separate it from several other randomness-reflection-notion. These investigations tie in to a project by Bienvenu and Merkle \cite{merkle3} regarding how two computable measures being mutually absolutely continuous (i.e.~having the same null sets) relates to randomness notions for these measures coinciding.

In Section \ref{sec:generichardness} we take a very generic look at the complexity of randomness reflection, and show that the $\Pi^1_1$-hardness established for Luzin's (N) in \cite{holicky} applies to almost all other randomness reflection notions, too.

Section \ref{sec:digression} contains a brief digression about functions where the image of null sets is small in some other sense (countable or meagre). We prove these  classical analysis results via various classical and higher computability methods.

We then conclude in Section \ref{sec:outlook} with a discussion of how this line of investigation could be continued in the future.

\section{Randomness and hyperarithmetic reductions}
\label{sec:higherrandomness}

Throughout, we assume familiarity with the theory of algorithmic
randomness and higher randomness in particular.  A standard
references for the former are \cite{downeyhirschfeldt_ARC} {and }\cite{Niesbook09}. {For the latter, readers may refer to }\cite{CY15book}.
We use standard computability-theoretic notation.  The Lebesgue
measure is denoted by $\lambda$.

Our goal in this section is to establish the following:

\begin{theorem}
\label{theo:milleryu}
Let $Y$ be $\Delta^1_1(Z)$-random and $\Pi^1_1$-random, let $X$ be $\Delta^1_1$-random and let $X \leq_h Y$. Then $X$ is $\Delta^1_1(Z)$-random.
\end{theorem}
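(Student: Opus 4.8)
The plan is to show that $X$ is Martin-L\"of random relative to every real $W\in\Delta^1_1(Z)$; this is equivalent to $X$ being $\Delta^1_1(Z)$-random, since every $\Delta^1_1(Z)$ null set is contained in a $W$-Martin-L\"of-null set for a suitable $W\in\Delta^1_1(Z)$. The idea is first to replace the hyperarithmetic reduction $X\leq_h Y$ by a \emph{tame} reduction $X\le_T Y\oplus\emptyset^{(\alpha)}$ with $\alpha$ a standard computable ordinal, and then to transfer randomness through this tame reduction by applying Miller and Yu's \cite[Theorem 4.3]{yuliang} relative to $\emptyset^{(\alpha)}$.

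For the taming step I would use both hypotheses on $Y$. The set $\{W : \omega_1^W>\CK\}$ is $\Pi^1_1$ --- it equals $\{W : \exists e\ (\{e\}^W\text{ is a well order not order-isomorphic to any computable well order})\}$ --- and has measure $0$ by a classical theorem of Sacks (see \cite{CY15book}); since $Y$ is $\Pi^1_1$-random, $\omega_1^Y=\CK$. Hence from $X\leq_h Y$ we get $X\le_T H^Y_a$ for some $a\in\KO^Y$ with $|a|^Y=\alpha<\CK$ a standard computable ordinal, and, passing to a standard notation and using Spector's uniqueness theorem relativized to $Y$, we may take $a\in\KO$, so $H^Y_a\equivT Y^{(\alpha)}$. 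Moreover $Y$, being $\Delta^1_1$-random, is in particular $2$-random and hence generalized low; so by effective transfinite recursion along the notation for $\alpha$, using $(A\oplus B)'\equivT A'\oplus B'$ at successors and the fact that $Y$ factors out of an effective join at limits, one gets $Y^{(\alpha)}\le_T Y\oplus\emptyset^{(\alpha)}$. Altogether $X\le_T Y\oplus\emptyset^{(\alpha)}$ for a fixed $\alpha<\CK$.

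Now fix an arbitrary $W\in\Delta^1_1(Z)$. Since $X$ is $\Delta^1_1$-random it is $\emptyset^{(\alpha)}$-Martin-L\"of-random, and since $Y$ is $\Delta^1_1(Z)$-random it is $(W\oplus\emptyset^{(\alpha)})$-Martin-L\"of-random --- and, more generally, satisfies any condition demanded of the ``very random'' side of \cite[Theorem 4.3]{yuliang} relative to $\emptyset^{(\alpha)}$, such conditions being consequences of $\Delta^1_1(Z)$-randomness. Applying \cite[Theorem 4.3]{yuliang} relative to $\emptyset^{(\alpha)}$ to the reduction $X\le_T Y\oplus\emptyset^{(\alpha)}$ (with ``somewhat random'' $X$ and ``very random'' $Y$, both over the base $\emptyset^{(\alpha)}$), we conclude that $X$ is $(W\oplus\emptyset^{(\alpha)})$-Martin-L\"of-random, in particular $W$-Martin-L\"of-random. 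As $W\in\Delta^1_1(Z)$ was arbitrary, $X$ is $\Delta^1_1(Z)$-random.

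The crux, and the genuinely new ingredient beyond \cite{yuliang}, is the taming step: converting an a priori unbounded hyperarithmetic reduction into a tame jump-reduction indexed by a \emph{standard} ordinal. This is exactly where $\Pi^1_1$-randomness of $Y$ is indispensable --- without $\omega_1^Y=\CK$, the real $Y$ could hyperarithmetically encode a randomness deficiency of $X$ into the ill-founded part of its own jump hierarchy, out of reach of every $\Delta^1_1(Z)$ test on $Y$. The remaining work is bookkeeping: the limit-stage manipulations in the jump-commutation lemma, and keeping $W\oplus\emptyset^{(\alpha)}$ inside $\Delta^1_1(Z)$ so that the randomness of $Y$ applies to it; the measure-theoretic heart of the transfer is inherited from \cite{yuliang} and only needs relativization (and, should \cite[Theorem 4.3]{yuliang} be stated only for iterated jumps of $\emptyset$, a transfinite iteration along $\KO$, which raises no new difficulty).
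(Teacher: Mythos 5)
Your proof is correct, but it takes a genuinely different route from the paper's. Both arguments use the $\Pi^1_1$-randomness of $Y$ in the same way and for the same purpose: by Sacks, $\omega_1^{\mathrm{CK},Y}=\omega_1^{\mathrm{CK}}$, so the reduction $X\leq_h Y$ can be witnessed as $X=\varphi_e(Y^{(\alpha)})$ with $\alpha\in\KO$ rather than $\alpha\in\KO^Y$. After that the paths diverge. The paper does \emph{not} pass to $X\leq_T Y\oplus\emptyset^{(\alpha)}$; instead it reproves the Miller--Yu argument directly in the higher setting: the analog of their Lemma 4.2 shows that for $\Delta^1_1$-random $X$ the sets $\{W\mid \varphi_e(W^{(\alpha)})\in[X_{\uhr n}]\}$ have measure below $2^{-n+c}$ (the relevant majorization sets form a $\emptyset^{(\alpha+2)}$-ML test), and then from an arbitrary $\hat Z$-ML test with $\hat Z\in\Delta^1_1(Z)$ it builds a $\Delta^1_1(Z)$ null set that $Y$ must avoid. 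Your version instead inserts the jump-commutation step $Y^{(\alpha)}\leq_T Y\oplus\emptyset^{(\alpha)}$ so that the classical Miller--Yu theorem, relativized to $\emptyset^{(\alpha)}$, can be applied as a black box. That step is valid here --- it is exactly the result the paper itself cites elsewhere as \cite[Corollary 4.3]{CY15}, applicable since $Y$ is $\Pi^1_1$-random --- but be aware that it is a genuine theorem and not pure bookkeeping: your derivation via ``$2$-random hence generalized low'' plus transfinite recursion needs uniformity of the successor-step reductions to get through limit stages, which is precisely what Chong and Yu's proof supplies. The trade-off is clear: your route reuses the classical theorem verbatim at the cost of importing a nontrivial jump-inversion lemma for random reals; the paper's route avoids that lemma entirely at the cost of redoing the measure-theoretic core with $\Delta^1_1(Z)$ tests in place of arithmetic ones. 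The remaining steps of your argument (that $\emptyset^{(\alpha)}$ and $W\oplus\emptyset^{(\alpha)}$ lie in $\Delta^1_1(Z)$, and the appeal to Sacks' characterization of $\Delta^1_1(Z)$-randomness as $\hat Z$-randomness for all $\hat Z\in\Delta^1_1(Z)$) are exactly as in the paper.
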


This is a higher-randomness counterpart to \cite[Theorem 4.3]{yuliang}, and the proof proceeds by adapting both this and \cite[Lemma 4.2]{yuliang}. We will use the theorem in the following form:

\begin{corollary}
Let $Z \geq \KO$. If $X$ is $\Delta^1_1$-random, $Y$ is $\Delta^1_1(Z)$-random and $X \leq_h Y$, then $X$ is $\Delta^1_1(Z)$-random.
\end{corollary}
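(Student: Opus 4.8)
The plan is to derive the corollary directly from Theorem~\ref{theo:milleryu}. That theorem assumes the ``very random'' real $Y$ is simultaneously $\Delta^1_1(Z)$-random and $\Pi^1_1$-random, and the only thing the corollary adds is the observation that, once $Z \geq \KO$, the second assumption is redundant. So the task reduces to checking that every $\Delta^1_1(Z)$-random real is $\Pi^1_1$-random whenever $Z \geq \KO$, and then invoking Theorem~\ref{theo:milleryu} verbatim.

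For the first point I would appeal to the standard theory of higher randomness (see e.g.~\cite{CY15book}): there is a largest $\Pi^1_1$ null set, and it is contained in a $\Delta^1_1(\KO)$ null set; equivalently, $\Delta^1_1(\KO)$-randomness implies $\Pi^1_1$-randomness (the content being that $\KO$ suffices to enumerate the $\Pi^1_1$ null sets in a measure-theoretically effective way). Since hyperarithmetic reducibility is transitive and $\leq_T$ implies $\leq_h$, the hypothesis $Z \geq \KO$ gives $\Delta^1_1(\KO) \subseteq \Delta^1_1(Z)$; in particular the $\Delta^1_1(\KO)$ null set covering the largest $\Pi^1_1$ null set is also a $\Delta^1_1(Z)$ null set. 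Hence any $\Delta^1_1(Z)$-random real avoids that set, and therefore avoids every $\Pi^1_1$ null set, i.e.\ is $\Pi^1_1$-random.

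Now assume the hypotheses of the corollary: $Z \geq \KO$, $X$ is $\Delta^1_1$-random, $Y$ is $\Delta^1_1(Z)$-random, and $X \leq_h Y$. By the previous paragraph $Y$ is also $\Pi^1_1$-random, so $Y$, $X$ and $Z$ satisfy all the hypotheses of Theorem~\ref{theo:milleryu}, which then yields that $X$ is $\Delta^1_1(Z)$-random, as required. I do not expect any genuine obstacle here: the only ingredient beyond bookkeeping is the cited implication that $\Delta^1_1(\KO)$-randomness implies $\Pi^1_1$-randomness, and the only care needed is to attribute that fact correctly; everything else is a direct application of the theorem already established in this section.
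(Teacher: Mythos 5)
Your proposal is correct and matches the paper's (implicit) argument: the paper leaves the corollary unproved but uses exactly this reasoning inline in the proof of Theorem \ref{theo:main} (``Note that $y$ is $\Pi^1_1$-random because $r\geq_T \KO$''), the point being that $\Delta^1_1(Z)$-randomness for $Z \geq_T \KO$ implies $\Delta^1_1(\KO)$-randomness, which implies $\Pi^1_1$-randomness (via Fact \ref{fact:delta11randomnonKO} together with Stern's characterization of $\Pi^1_1$-randomness as $\Delta^1_1$-randomness plus $x \not\geq_h \KO$), so Theorem \ref{theo:milleryu} applies directly.
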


\begin{lemma}
\label{lemma:42}
Fix $\alpha \in \KO$ and $e \in \mathbb{N}$. If $X$ is $\Delta^1_1$-random, then:
$$\exists c \ \forall n \ \lambda(\{Y \mid \varphi_e(Y^{(\alpha)}) \in [X_{\upharpoonright n}]\}) < 2^{-n+c}$$
\begin{proof}
Analogous to the proof of \cite[Lemma 4.2]{yuliang}. Let $\mathcal{H}_\sigma = \{Y \mid \varphi_e(Y^{(\alpha)}) \in [\sigma]\}$, and then let $F_i = \{\sigma \mid \lambda(\mathcal{H}_\sigma) > 2^{-|\sigma| + i}\}$. By construction, the $\mathcal{H}_{\sigma}$ are uniformly $\Delta^0_{\alpha+1}$ (as subsets of $\Cantor$), and so the sets $F_i$ are uniformly $\Delta^0_{\alpha+2}$ (as subsets of $2^{<\omega}$).

A counting argument shows that $\lambda([F_i]) < 2^{-i}$: Pick a prefix free set $D \subseteq F_i$ with $[D] = [F_i]$. Then: $$1 \geq \lambda(\bigcup_{\sigma \in D} \mathcal{H}_\sigma) = \sum_{\sigma \in D} \lambda(\mathcal{H}_\sigma) \geq \sum_{\sigma \in D} 2^{-|\sigma| + i} = 2^{i}\lambda([F_i])$$

We see that $([F_i])_{i \in \mathbb{N}}$ is a Martin-L\"of test relative to $\emptyset^{\alpha+2}$. Since $X$ is $\Delta^1_1$-random, there has to be some $c \in \mathbb{N}$ with $X \notin [F_c]$. This in turn means that $\forall n \in \mathbb{N} \ X_{\upharpoonright n} \notin F_c$, which by definition of $F_c$ is the desired claim.
\end{proof}
\end{lemma}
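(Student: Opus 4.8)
The plan is to convert the statement into an avoidance property for a Martin-Löf test living at a fixed hyperarithmetic level, which $X$ must satisfy because it is $\Delta^1_1$-random. For each finite string $\sigma$ I would write $\mathcal{H}_\sigma = \{Y \mid \varphi_e(Y^{(\alpha)}) \in [\sigma]\}$ for the preimage of the cylinder $[\sigma]$ under the map $Y \mapsto \varphi_e(Y^{(\alpha)})$, so that the goal becomes exactly: find a constant $c$ with $\lambda(\mathcal{H}_{X \uhr n}) < 2^{-n+c}$ for all $n$. The crucial structural observation is that $\varphi_e(Y^{(\alpha)})$ is (at most) a single point, so for pairwise incomparable strings the sets $\mathcal{H}_\sigma$ are pairwise disjoint; this disjointness is what makes the preimage measures behave additively and lets a counting argument succeed.

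Next I would isolate the strings whose preimage is anomalously large. Put $F_i = \{\sigma \mid \lambda(\mathcal{H}_\sigma) > 2^{-|\sigma|+i}\}$, the strings whose preimage overshoots the cylinder's own measure $2^{-|\sigma|}$ by a factor of $2^i$. Choosing a prefix-free antichain $D \subseteq F_i$ with $[D] = [F_i]$ and invoking disjointness, I would bound
$$1 \geq \lambda\Big(\bigcup_{\sigma \in D} \mathcal{H}_\sigma\Big) = \sum_{\sigma \in D} \lambda(\mathcal{H}_\sigma) \geq \sum_{\sigma \in D} 2^{-|\sigma|+i} = 2^i \lambda([F_i]),$$
whence $\lambda([F_i]) < 2^{-i}$ and $([F_i])_{i}$ forms a Martin-Löf test at whatever effective level the $F_i$ occupy. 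If $X$ passes this test, then some level $[F_c]$ omits every initial segment of $X$, i.e. $X \uhr n \notin F_c$ for all $n$, which is exactly the desired bound once one absorbs the gap between the strict inequality defining $F_c$ and the strict inequality in the claim into the constant (replacing $c$ by $c+1$).

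The main obstacle, and the only genuinely delicate part, is the complexity bookkeeping needed to certify that $([F_i])_{i}$ really is a test a $\Delta^1_1$-random real must pass. Here I would track that membership in $\mathcal{H}_\sigma$ is decided by running $\varphi_e$ against the $\alpha$-th jump of $Y$, so the classes $\mathcal{H}_\sigma$ are uniformly $\Delta^0_{\alpha+1}$ as subsets of $\Cantor$; computing their measures and comparing against the dyadic threshold $2^{-|\sigma|+i}$ costs one further jump, making the $F_i$ uniformly $\Delta^0_{\alpha+2}$ as subsets of $2^{<\omega}$. Consequently $([F_i])_{i}$ is a Martin-Löf test relative to the fixed iterated jump $\emptyset^{(\alpha+2)}$. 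Since $\alpha \in \KO$ is a computable ordinal, $\emptyset^{(\alpha+2)}$ is hyperarithmetic, and $\Delta^1_1$-randomness of $X$ is precisely what guarantees that $X$ passes every Martin-Löf test relative to a hyperarithmetic oracle; this delivers the required $c$. I would follow the template of \cite[Lemma 4.2]{yuliang} throughout, the new content being the replacement of a single oracle jump by the transfinite jump $Y^{(\alpha)}$ and the verification that the effective level of the test stays within the hyperarithmetic hierarchy.
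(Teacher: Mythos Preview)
Your proposal is correct and follows essentially the same argument as the paper: define $\mathcal{H}_\sigma$ and $F_i$ identically, use the same prefix-free counting bound to show $\lambda([F_i]) < 2^{-i}$, and invoke $\Delta^1_1$-randomness to pass the resulting $\emptyset^{(\alpha+2)}$-ML test. You are slightly more explicit than the paper in spelling out the disjointness of the $\mathcal{H}_\sigma$ for incomparable $\sigma$ and in noting the $c \mapsto c+1$ adjustment for the strict inequality, but these are refinements of the same proof rather than a different route.
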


\begin{fact}[Sacks \cite{Sacks69}]
\label{fact:twodelta11randomness}
{$\Delta^1_1(Z)$-randomness (defined by being contained in no $\Delta^1_1(Z)$-null sets) is equivalent to being $\hat{Z}$-random for every $\hat{Z} \in \Delta^1_1(Z)$.}
\end{fact}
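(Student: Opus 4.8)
The plan is to prove both directions by contraposition, using the standard identification of $\Delta^1_1(Z)$ on $\Cantor$ with the effective Borel sets whose codes lie in $\mathrm{HYP}(Z)$ — equivalently, the sets that are $\Sigma^0_\xi$ relative to $Z$ for some $Z$-computable ordinal $\xi < \omega_1^{\mathrm{CK},Z}$.

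One direction is immediate: if $X$ fails to be $\hat Z$-Martin-L\"of random for some $\hat Z \in \Delta^1_1(Z)$, as witnessed by a test $(U_n)_n$, then $X$ lies in the $\Pi^0_2(\hat Z)$ null set $\bigcap_n U_n$; since $\hat Z \leq_h Z$ this set is $\Delta^1_1(Z)$, so $X$ is not $\Delta^1_1(Z)$-random.

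For the converse, suppose $X$ lies in a $\Delta^1_1(Z)$ null set $N$, which we may take to be $\Sigma^0_\xi$ relative to $Z$ for some $\xi < \omega_1^{\mathrm{CK},Z}$. The core is the effective-measure-theoretic fact that a null $\Sigma^0_\xi(Z)$ set is contained in $\bigcap_n U_n$ for a Martin-L\"of test $(U_n)_n$ relative to some oracle $\leq_h Z$ (concretely, a jump $Z^{(\eta)}$ with $\eta$ a fixed finite amount above $\xi$). I would prove this by transfinite induction on $\xi$: the base case uses that each closed piece of a null $\Sigma^0_2$ set is itself null, so one can search computably for clopen approximants of arbitrarily small measure to it, the search provably terminating by nullity; the successor and limit steps push the oracle up, using that whether the measure of a rank-$\xi$ effective Borel set lies below a given rational is decided by a jump of $Z$ not much above level $\xi$. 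Since $\xi$ is $Z$-computable, the resulting oracle is hyperarithmetic in $Z$, so $N$ is covered by a Martin-L\"of test relative to some $\hat Z \in \Delta^1_1(Z)$, and hence $X$ is not $\hat Z$-random.

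The main obstacle is this transfinite induction: the bookkeeping on how many extra jumps measure-approximation costs, and especially the treatment of limit levels so that the oracle used stays hyperarithmetic in $Z$. This is, however, a well-worn part of classical effective measure theory and underlies Sacks's \cite{Sacks69} development; alternatively, one may simply invoke that from a $\Delta^1_1(Z)$ Borel code of a null set one can $\mathrm{HYP}(Z)$-uniformly produce open covers of arbitrarily small measure, which is precisely what the argument needs.
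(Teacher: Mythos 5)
The paper offers no proof of this Fact; it is stated with a citation to Sacks, and your outline is precisely the standard argument behind that citation: the easy direction via closure of $\Delta^1_1(Z)$ under arithmetic operations relative to members of $\mathrm{HYP}(Z)$, and the hard direction via the effective-measure lemma that a null set with a hyperarithmetic-in-$Z$ Borel code of rank $\xi$ is covered by a Martin-L\"of test relative to a jump $Z^{(\eta)}$ with $\eta<\omega_1^{\mathrm{CK},Z}$, proved by transfinite induction on the code. Your sketch is correct in outline (the base case for null $\Pi^0_1(Z)$ classes and the jump-counting at successor and limit stages are exactly the points Sacks handles), so there is nothing to compare against in the paper beyond confirming you have reconstructed the intended classical proof.
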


\begin{lemma}
\label{lemma:milleryu}
Fix $\alpha \in \KO$ and $e \in \mathbb{N}$. If $X = \varphi_e(Y^{(\alpha)})$, $X$ is $\Delta^1_1$-random and $Y$ is $\Delta^1_1(Z)$-random, then $X$ is {$\Delta^1_1(Z)$}-random.
\begin{proof}
We follow the proof of \cite[Theorem 4.3]{yuliang}. Let $c$ be the constant guaranteed for $X$ by Lemma \ref{lemma:42}. As in the proof of Lemma \ref{lemma:42}, let $\mathcal{H}_\sigma = \{W \mid \varphi_e(W^{(\alpha)}) \in [\sigma]\}$. Let $\mathcal{G}_\sigma = \mathcal{H}_\sigma$ if $\lambda(\mathcal{H}_\sigma) < 2^{-|\sigma| + c}$ and $\mathcal{G}_\sigma = \emptyset$ else. Note that $\mathcal{G}_\sigma$ is still uniformly $\Delta^1_1$. The choice of $c$ in particular guarantees that $Y \in \mathcal{G}_{X_{\upharpoonright n}}$ for each $n \in \mathbb{N}$.

Let {$\cap_n U_n$ denote a Martin-L\"of test relative to $\hat Z$ for some
$\hat Z \in \Delta^1_1(Z)$.}  By Fact \ref{fact:twodelta11randomness},
it suffices to show that $X\not\in \cap_n U_n$.
We set {$\mathcal{K}_i = \bigcup_{\sigma \in U_{c+i}} \mathcal{G}_\sigma$}, and $\mathcal{K} = \bigcap_{i \in \mathbb{N}} \mathcal{K}_i$. We find that $\mathcal{K}$ is $\Delta^1_1(Z)$. Moreover, we have that:
$$\lambda(\mathcal{K}_i) \leq \sum_{\sigma \in U_{c+i}} \lambda(\mathcal{G}_\sigma) \leq \sum_{\sigma \in U_{c+i}} 2^{-|\sigma| + c} \leq 2^{-i}$$
Hence, it follows that $\lambda(\mathcal{K}) = 0$, {so for some $i$, $Y \not \in \mathcal K_i$.
Then $X \not \in U_{c+i}$, because $Y \in \mathcal G_\sigma$ for all $\sigma \prec X$.}
\end{proof}
\end{lemma}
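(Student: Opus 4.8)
The plan is to follow the strategy of \cite[Theorem 4.3]{yuliang}: transfer a randomness test for $X$ into a test for $Y$ through the functional $\varphi_e(\cdot^{(\alpha)})$, and then exploit the assumed $\Delta^1_1(Z)$-randomness of $Y$ to escape it. By Fact \ref{fact:twodelta11randomness}, to show $X$ is $\Delta^1_1(Z)$-random it suffices to show that $X$ is $\hat Z$-random for each $\hat Z \in \Delta^1_1(Z)$, i.e.\ that $X$ passes every Martin-L\"of test $\bigcap_n U_n$ relative to such a $\hat Z$. So I would fix one such test and argue $X \notin \bigcap_n U_n$.

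First I would invoke Lemma \ref{lemma:42} applied to $X$ to obtain a constant $c$ with $\lambda(\mathcal{H}_{X \uhr n}) < 2^{-n+c}$ for all $n$, where $\mathcal{H}_\sigma = \{W \mid \varphi_e(W^{(\alpha)}) \in [\sigma]\}$. To gain uniform measure control I would replace $\mathcal{H}_\sigma$ by the truncated family $\mathcal{G}_\sigma$, equal to $\mathcal{H}_\sigma$ when $\lambda(\mathcal{H}_\sigma) < 2^{-|\sigma|+c}$ and equal to $\emptyset$ otherwise. Since the $\mathcal{H}_\sigma$ are uniformly $\Delta^1_1$ (indeed $\Delta^0_{\alpha+1}$), the family $\mathcal{G}_\sigma$ stays uniformly $\Delta^1_1$; and the choice of $c$ guarantees the crucial property $Y \in \mathcal{G}_{X \uhr n}$ for every $n$, because $X = \varphi_e(Y^{(\alpha)})$ forces $Y \in \mathcal{H}_{X\uhr n}$ and the truncation never discards these particular sets.

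Next I would push the test forward. Taking each $U_n$ to be a prefix-free antichain with $\lambda([U_n]) \le 2^{-n}$, I set $\mathcal{K}_i = \bigcup_{\sigma \in U_{c+i}} \mathcal{G}_\sigma$ and $\mathcal{K} = \bigcap_i \mathcal{K}_i$. The truncation then yields $\lambda(\mathcal{K}_i) \le \sum_{\sigma \in U_{c+i}} 2^{-|\sigma|+c} = 2^c\,\lambda([U_{c+i}]) \le 2^{-i}$, so $\lambda(\mathcal{K}) = 0$; and because the $\mathcal{G}_\sigma$ are $\Delta^1_1$ while $\hat Z \in \Delta^1_1(Z)$, the set $\mathcal{K}$ is $\Delta^1_1(Z)$. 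As $Y$ is $\Delta^1_1(Z)$-random, $Y \notin \mathcal{K}$, hence $Y \notin \mathcal{K}_i$ for some $i$. Finally I would conclude by contraposition: if $X \in \bigcap_n U_n$, then some $\sigma \prec X$ lies in $U_{c+i}$, whence $Y \in \mathcal{G}_\sigma \subseteq \mathcal{K}_i$, contradicting $Y \notin \mathcal{K}_i$. Therefore $X \notin \bigcap_n U_n$, as required.

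The hard part is the bookkeeping that makes the truncation serve two purposes at once. One must verify that passing from $\mathcal{H}_\sigma$ to $\mathcal{G}_\sigma$ keeps the family uniformly $\Delta^1_1$ (so that combining it with the $\Delta^1_1(Z)$ data $U_{c+i}$ produces a genuinely $\Delta^1_1(Z)$ null set that $Y$ is entitled to avoid), while simultaneously not destroying the membership $Y \in \mathcal{G}_{X \uhr n}$ delivered by Lemma \ref{lemma:42}. The measure estimate itself is routine once $U_{c+i}$ is taken prefix-free; the only subtlety is aligning the index shift by $c$ so that the geometric bound $2^{-i}$ survives the factor $2^c$ introduced by the truncation threshold.
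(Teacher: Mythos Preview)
Your proposal is correct and follows essentially the same argument as the paper: invoke Lemma~\ref{lemma:42} to obtain the constant $c$, truncate $\mathcal{H}_\sigma$ to $\mathcal{G}_\sigma$, push the $\hat Z$-test $(U_n)$ forward to the $\Delta^1_1(Z)$ null set $\mathcal{K} = \bigcap_i \bigcup_{\sigma \in U_{c+i}} \mathcal{G}_\sigma$, and use $Y \notin \mathcal{K}_i$ together with $Y \in \mathcal{G}_{X\uhr n}$ to force $X \notin U_{c+i}$. The only cosmetic difference is that you spell out the prefix-freeness of $U_n$ and the final contraposition a bit more explicitly than the paper does.
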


\begin{fact}[Sacks \cite{Sacks69}]
\label{fact:omega1ck}
If $Y$ is $\Pi^1_1$-random, then $\omega_1^{\mathrm{CK}} = \omega_1^{\mathrm{CK},Y}$.
\end{fact}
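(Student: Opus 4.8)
The plan is to reduce the statement to the claim that $N := \{Y : \omega_1^{\mathrm{CK},Y} > \CK\}$ is a $\Pi^1_1$ null set. Granting that, a $\Pi^1_1$-random $Y$ lies outside every $\Pi^1_1$ null set, hence outside $N$, so $\omega_1^{\mathrm{CK},Y} \le \CK$; since $\omega_1^{\mathrm{CK},Y} \ge \CK$ holds for every $Y$ (recursive ordinals stay $Y$-recursive), we conclude $\omega_1^{\mathrm{CK},Y} = \CK$.

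First I would check that $N$ is lightface $\Pi^1_1$. The $Y$-recursive ordinals form an initial segment of the countable ordinals, so $\omega_1^{\mathrm{CK},Y} > \CK$ holds iff some $Y$-recursive linear order is a well-order of rank $\ge \CK$, i.e.\ iff $\exists e$ such that $\varphi_e^Y$ codes a well-order into which every recursive well-order order-embeds. Now ``$\varphi_e^Y$ is a well-order'' is $\Pi^1_1$ in $Y$; the embedding clause, written $\forall a\,(a \in \KO \to \exists f\,(f \text{ embeds } <_a \text{ into } \varphi_e^Y))$, is $\Pi^1_1$ in $Y$ because ``$a \in \KO$'' is $\Pi^1_1$; and a leading number quantifier $\exists e$ does not raise the level. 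Hence $N$ is $\Pi^1_1$.

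The heart of the matter is that $\lambda(N) = 0$, which is Sacks' measure-theoretic uniformity. By countable subadditivity it is enough to fix $e$ and show $N_e := \{Y : \varphi_e^Y \text{ is a well-order with } \|\varphi_e^Y\| \ge \CK\}$ is null; assume for contradiction $\lambda(N_e) > 0$. This amounts to a measure-theoretic strengthening of the $\Sigma^1_1$-boundedness theorem: the $\Pi^1_1$-rank $Y \mapsto \|\varphi_e^Y\|$ on $\{Y : \varphi_e^Y \in \mathrm{WO}\}$ should be bounded below $\CK$ on a conull set, for otherwise the positive-measure supply of $Y$-recursive well-orders of rank $\ge\CK$ could be amalgamated — exploiting that Lebesgue measure behaves $\Sigma^1_1$-definably on $\Sigma^1_1$ sets, together with a selection/averaging over $Y \in N_e$ — into a single hyperarithmetic well-order of rank $\ge\CK$, which is impossible since every $\Delta^1_1$ well-order has rank $<\CK$ (as $\omega_1^{\mathrm{CK},X}=\CK$ for $X\in\Delta^1_1$). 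The delicate point, and the main obstacle, is that $N_e$ is only $\Pi^1_1$ and so need not contain any $\Sigma^1_1$ subset of positive measure, so boundedness cannot simply be applied to a subset; carrying out the amalgamation regardless is precisely the content of Sacks' theorem, whose argument I would follow. Once $N$ is identified as a $\Pi^1_1$ null set, the conclusion is immediate as in the first paragraph.
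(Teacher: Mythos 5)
The paper offers no proof of this statement at all --- it is quoted as a Fact with a citation to Sacks --- so there is no in-paper argument to compare against. Your overall strategy (show that $N=\{Y: \omega_1^{\mathrm{CK},Y}>\CK\}$ is a null $\Pi^1_1$ set, then use that a $\Pi^1_1$-random avoids every null $\Pi^1_1$ set) is indeed the standard packaging of the cited result, and the first and last paragraphs of your argument are fine.

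The genuine gap is in the middle step, the claim that $N$ is lightface $\Pi^1_1$, whose justification is backwards as written. In the clause $\forall a\,\bigl(a \in \KO \to \exists f\,(f \text{ embeds } <_a \text{ into } \varphi_e^Y)\bigr)$, the $\Pi^1_1$ predicate ``$a \in \KO$'' sits in the \emph{antecedent} of the implication, so it contributes its negation, which is $\Sigma^1_1$; together with the $\Sigma^1_1(Y)$ consequent and the outer number quantifier (under which $\Sigma^1_1$ is closed), the whole clause is $\Sigma^1_1$, not $\Pi^1_1$. Conjoining it with the $\Pi^1_1$ statement ``$\varphi_e^Y$ is a well-order'' leaves you with a conjunction of a $\Pi^1_1$ and a $\Sigma^1_1$ formula, which is not prima facie $\Pi^1_1$. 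The set $N$ \emph{is} $\Pi^1_1$, but establishing this requires real additional machinery: either the effective comparison lemma for well-orderings (so that, given $\varphi_e^Y \in \mathrm{WO}$, rank comparisons can be rewritten on the correct side of the hierarchy), or the route $\omega_1^{\mathrm{CK},Y}>\CK \Leftrightarrow \KO \leq_h Y$ (Sacks' Corollary 7.7, used elsewhere in this paper) combined with the facts that $\{\KO\}$ is a $\Pi^1_1$ singleton, that $\{(X,Y): X\leq_h Y\}$ is $\Pi^1_1$, and that $\Pi^1_1$ is closed under the quantifier $\exists X \in \Delta^1_1(Y)$ (Spector--Gandy). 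As for the nullity of $N$, you explicitly defer to Sacks' measure-theoretic uniformity theorem; since that is precisely what the paper cites, this deferral is acceptable, but it means the one step you do argue in detail is the one containing the error.
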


\begin{proof}[Proof of Theorem \ref{theo:milleryu}]
Since $Y$ is $\Pi^1_1$-random, we know that $X \leq_h Y$ implies the existence of some $\alpha \in \KO$ and $e \in \mathbb{N}$ such that $X = \varphi_e(Y^{(\alpha)})$ (rather than merely $\alpha \in \KO^Y$). We can thus invoke Lemma \ref{lemma:milleryu} {to conclude that $X$ is $\Delta^1_1(Z)$-random.}
\end{proof}

{As an aside,}
the requirement in Theorem \ref{theo:milleryu} that $Y$ be $\Pi^1_1$-random might be unexpected at first -- it has no clear counterpart in \cite[Theorem 4.3]{yuliang}.
{The following example,
which is not needed for anything else in the paper, shows that this assumption is necessary.}

\begin{example}
There are $\Delta^1_1$-random $X$ and $\Delta^1_1(Z)$-random $Y$ with $X \leq_h Y$ but $X$ is not $\Delta^1_1(Z)$-random. In fact, we shall chose $X = Z$, and make $X$ even $\Pi^1_1$-random.
\begin{proof}
Let $Y$ be a $\Delta^1_1$-random satisfying $Y \geq_h \KO$. The existence of such a $Y$ was shown in \cite{yuliang2}. Let $X$ be Martin-L\"of random relative to $Y \oplus \KO$ while satisfying $X \leq_h Y$. This choice ensures that $X$ is $\Pi^1_1$-random (so in particular $\Delta^1_1$-random).

By van Lambalgen's theorem {relativized to $\emptyset^{(\alpha)}$},
if both $X$ and $Y$ are $\Delta^1_1$-random, then for any $\alpha < \omega_1^{\mathrm{CK}}$ it holds that $X$ is $Y \oplus \emptyset^{(\alpha)}$-random iff $X \oplus Y$ is $\emptyset^{(\alpha)}$-random iff $Y$ is $X \oplus \emptyset^{(\alpha)}$-random. Since by choice of $X$, we know that in particular $X$ is $Y \oplus \emptyset^{(\alpha)}$-random, we conclude that $Y$ is $X \oplus \emptyset^{(\alpha)}$-random.

From (\cite[Corollary 4.3]{CY15}) it follows that for $\Pi^1_1$-random $X$ and $\beta < \omega_1^{\mathrm{CK}}$ it holds that $X^{(\beta)} \leq_{\mathrm{T}} X \oplus \emptyset^{(\beta)}$.
{(The conclusion above surely does not require full
$\Pi^1_1$-randomness of $X$, but too much precision would take us afield.)}
Together with the above, this shows that $Y$ is $X^{(\alpha)}$-random for every $\alpha < \omega_1^{\mathrm{CK}}$. Since $X$ is $\Pi^1_1$-random, by Fact \ref{fact:omega1ck} we have that $\omega_1^{\mathrm{CK}} = \omega_1^{\mathrm{CK},X}$, and thus that $Y$ is $Z$-random for any $Z \in \Delta^1_1(X)$. By Fact \ref{fact:twodelta11randomness} this establishes $Y$ to be $\Delta^1_1(X)$-random. But trivially, $X$ cannot be $\Delta^1_1(X)$-random.
\end{proof}
\end{example}

\section{Luzin's (N) and randomness reflection}
\label{sec:main}

\begin{definition}
A function satisfies Luzin's (N) iff the image of every null set is null.
\end{definition}

\begin{definition}
{For any randomness notion $R$ and a function $f$, we say that $f$ \emph{reflects $R$-randomness}
if $f(x)$ is $R$-random implies $x$ is $R$-random for all $x$ in the domain of $f$.}
\end{definition}

\subsection{Luzin's (N) and higher Martin-L\"of randomness reflection}\label{sec:mlr}

By noting that the sets of points not Martin-L\"of random relative to some oracle are canonical choices of null sets, we obtain the following:

\begin{proposition}
\label{prop:variants}
The following are equivalent for a computable function $f : \mathbb{R} \to \mathbb{R}$:
\begin{enumerate}
\item $f$ satisfies Luzin's (N)
\item $\forall p \in \Cantor \ \exists q \in \Cantor \ f(x) \in \mathrm{MLR}(q) \Rightarrow x \in \mathrm{MLR}(p)$
\item $\forall p \in \Cantor \ f(x) \in \Delta^1_1\mathrm{-random}(p) \Rightarrow x \in \mathrm{MLR}(p)$
\item $\forall p \in \Cantor \ f(x) \in \Delta^1_1\mathrm{-random}(p) \Rightarrow x \in \Delta^1_1\mathrm{-random}(p)$
\end{enumerate}
\begin{proof}
\begin{description}
\item[$1. \Leftrightarrow 2.$]
Each null set is contained in a set of the form $\mathrm{MLR}(q)^C$ for some oracle $q \in \Cantor$. Luzin's (N) is thus equivalent to saying that for any $p$ there is a $q$ with $f[\mathrm{MLR}(p)^C] \subseteq \mathrm{MLR}(q)^C$. Taking the contrapositive yields $(2)$.
\item[$2. \Rightarrow 3.$] Any $\Sigma^1_1$-null set is contained in a $\Delta^1_1$-null set (\cite{Sacks69}). Thus it suffices to choose $q$ as something hyperarithmetical in $p$.
\item[$3. \Rightarrow 1.$] Trivial.
\item[$3. \Rightarrow 4.$] Assume that $(4)$ fails, i.e.~that there is some $p \in \Cantor$ and some $x \notin \Delta^1_1\mathrm{-random}(p)$ with $f(x) \in \Delta^1_1\mathrm{-random}(p)$. But if $x \notin \Delta^1_1\mathrm{-random}(p)$, then there is some $q \leq_h p$ with $x \notin \mathrm{MLR}(q)$, but $f(x) \in \Delta^1_1\mathrm{-random}(q) = \Delta^1_1\mathrm{-random}(p)$, hence $(3)$ is violated, too.
\item[$4. \Rightarrow 3.$] Trivial.
\end{description}
\end{proof}
\end{proposition}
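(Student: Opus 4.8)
The plan is to route all four conditions through Sacks's theorems on higher null sets together with one elementary coding fact, after which every implication becomes a one-line contrapositive. First I would isolate three auxiliary facts. (i) A set $N \subseteq \Cantor$ is null iff $N \subseteq \mathrm{MLR}(q)^C$ for some $q$: the nontrivial direction comes from covering $N$ by a Martin-L\"of test and coding that test into the oracle $q$, while the other direction is just that $\mathrm{MLR}(q)^C$ is always null. (ii) If $f$ is computable then $f[\mathrm{MLR}(p)^C]$ is $\Sigma^1_1(p)$, being the continuous image of the $\Pi^0_2(p)$ set $\mathrm{MLR}(p)^C$ under a computable map, so whenever it is null it is contained in a $\Delta^1_1(p)$ null set by Sacks. (iii) By Fact~\ref{fact:twodelta11randomness}, $x$ is $\Delta^1_1(p)$-random iff $x \in \mathrm{MLR}(q)$ for every $q \leq_h p$; in particular $\Delta^1_1\mathrm{-random}(p) \subseteq \mathrm{MLR}(p)$, the set $\Delta^1_1\mathrm{-random}(p)^C$ is null (a countable union of null sets, there being only countably many $\Delta^1_1(p)$ sets), and every $\Delta^1_1(p)$ null set is disjoint from $\Delta^1_1\mathrm{-random}(p)$ directly from the definition.

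With these in hand the implications are short. For $1 \Leftrightarrow 2$: by (i) the sets $\mathrm{MLR}(p)^C$ are cofinal among null sets under inclusion and are themselves null, so Luzin's (N) is equivalent to ``for every $p$ there is $q$ with $f[\mathrm{MLR}(p)^C] \subseteq \mathrm{MLR}(q)^C$'', and contraposing the inclusion is exactly (2). For $1 \Rightarrow 3$ (the crux): assuming Luzin's (N), the set $f[\mathrm{MLR}(p)^C]$ is null and by (ii) sits inside a $\Delta^1_1(p)$ null set $M$, so $x \notin \mathrm{MLR}(p)$ forces $f(x) \in M$ and hence $f(x)$ not $\Delta^1_1(p)$-random; the contrapositive is (3). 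For $3 \Rightarrow 1$: contraposing (3) gives $f[\mathrm{MLR}(p)^C] \subseteq \Delta^1_1\mathrm{-random}(p)^C$, which is null by (iii), and since every null set lies in some $\mathrm{MLR}(p)^C$ this says $f$ sends null sets to null sets. For $3 \Leftrightarrow 4$: $4 \Rightarrow 3$ is immediate from $\Delta^1_1\mathrm{-random}(p) \subseteq \mathrm{MLR}(p)$, and for $3 \Rightarrow 4$ I would contrapose once more: if $x \notin \Delta^1_1\mathrm{-random}(p)$ then by (iii) there is $q \leq_h p$ with $x \notin \mathrm{MLR}(q)$, while $q \leq_h p$ gives $\Delta^1_1(q) \subseteq \Delta^1_1(p)$ and hence $\Delta^1_1\mathrm{-random}(p) \subseteq \Delta^1_1\mathrm{-random}(q)$, so $(x,q)$ witnesses a failure of (3).

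I do not expect a genuinely hard step: the content is in identifying the correct uniform randomness notion and in a single appeal to Sacks. The one nontrivial move is the passage in $1 \Rightarrow 3$ from the a priori merely $\Sigma^1_1(p)$ set $f[\mathrm{MLR}(p)^C]$ down to a $\Delta^1_1(p)$ null set; it is exactly Sacks's covering theorem for $\Sigma^1_1$ null sets that makes the uniform-in-$p$ conditions (3) and (4) equivalent to the purely existential condition (2). What remains is routine: verifying that $f[\mathrm{MLR}(p)^C]$ really is analytic over $p$ (using only that a computable $f : \mathbb R \to \mathbb R$ is continuous with a $p$-effective presentation) and the standard transfer between Lebesgue measure on $\mathbb R$ and the uniform measure on $\Cantor$.
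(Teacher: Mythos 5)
Your proposal is correct and follows essentially the same route as the paper: covering null sets by $\mathrm{MLR}(q)^C$, Sacks's theorem that $\Sigma^1_1(p)$ null sets are contained in $\Delta^1_1(p)$ null sets, and the characterization of $\Delta^1_1(p)$-randomness as $\mathrm{MLR}(q)$ for all $q \leq_h p$, with each implication obtained by contraposition. The only cosmetic difference is that you prove $1 \Rightarrow 3$ directly where the paper routes through $2 \Rightarrow 3$, which is immaterial once $1 \Leftrightarrow 2$ is in place.
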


\begin{corollary}
\label{corr:delta11reflectionfollows}
A computable function satisfying Luzin's (N) reflects $\Delta^1_1$-randomness
{relative to any oracle}.
\end{corollary}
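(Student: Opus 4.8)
The plan is to read the statement directly off Proposition \ref{prop:variants}. Suppose $f : \mathbb{R} \to \mathbb{R}$ is computable and has Luzin's (N); this is condition (1) of Proposition \ref{prop:variants}, so condition (4) holds as well: for every $p \in \Cantor$, if $f(x) \in \Delta^1_1\mathrm{-random}(p)$ then $x \in \Delta^1_1\mathrm{-random}(p)$ (for every $x$ in the domain of $f$). Unwinding the definition of ``reflects $R$-randomness'' with $R$ taken to be $\Delta^1_1(p)$-randomness, this is precisely the assertion that $f$ reflects $\Delta^1_1$-randomness relative to the oracle $p$. Since the quantifier over $p$ in condition (4) is universal, $f$ reflects $\Delta^1_1$-randomness relative to any oracle. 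I expect no real obstacle here, as all the content is already packaged into Proposition \ref{prop:variants}; the only thing worth flagging is that the universal quantifier over $p$ built into condition (4) is exactly what the phrase ``relative to any oracle'' demands, so no separate relativization step is needed.

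For robustness I would also record the direct argument, which simply reproves the relevant fragment of Proposition \ref{prop:variants}. Arguing by contraposition: if $x \notin \Delta^1_1\mathrm{-random}(p)$, then $x$ lies in some $\Delta^1_1(p)$ null set $A$; since $f$ is computable, $f[A]$ is $\Sigma^1_1(p)$, and it is null by Luzin's (N); by Sacks's theorem that every $\Sigma^1_1(p)$ null set is contained in a $\Delta^1_1(p)$ null set, $f(x) \in f[A]$ is contained in a $\Delta^1_1(p)$ null set, so $f(x) \notin \Delta^1_1\mathrm{-random}(p)$. This is the contrapositive of the desired implication, and it uses only ingredients already present in the proof of Proposition \ref{prop:variants}.
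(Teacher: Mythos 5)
Your proof is correct and matches the paper's: the corollary is stated there as an immediate consequence of Proposition \ref{prop:variants}, whose condition (4) is literally the assertion that $f$ reflects $\Delta^1_1$-randomness relative to every oracle $p$. Your supplementary direct argument just re-derives the relevant chain of implications from that proposition's proof, so nothing new is needed.
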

We can now ask whether reflecting $\Delta^1_1$-randomness relative to some specific oracle already suffices.

\begin{fact}[\cite{martin4,yuliang3}]
\label{fact:pi01classko}
If $A$ is an uncountable $\Delta^1_1(y)$-class such that $y\leq_h z$ for every $z\in A$, then there is some $x \in A$ with $\KO^y \leq_h x$.
\end{fact}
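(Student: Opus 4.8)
The plan is to derive the statement from the effective perfect set theorem together with a coding construction, with the hypothesis entering precisely to make the decoding hyperarithmetic. First I would apply the effective perfect set theorem to $A$: since $A$ is an uncountable $\Delta^1_1(y)$ set it is Borel, with a code that is $\Delta^1_1(y)$ and hence of Borel rank below $\omega_1^{\mathrm{CK},y}$, so the Cantor--Bendixson-style derivative analysis appropriate to that rank terminates at a stage hyperarithmetic in $y$ and isolates a nonempty closed perfect subset $P \subseteq A$ whose tree $T$ satisfies $T \leq_h y$. Here ``perfect'' means that every node of $T$ extends to a splitting node (one with two immediate successors in $T$), so that each branch $x \in [T] = P$ passes through a canonical increasing sequence of splitting nodes $\sigma^x_0 \prec \sigma^x_1 \prec \cdots$.

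Next I would code $\KO^y$ into a branch. Since $T \leq_h y$, in particular $T \leq_{\mathrm{T}} \KO^y$ (any $\Delta^1_1(y)$ real is $\leq_{\mathrm{T}} y^{(\alpha)} \leq_{\mathrm{T}} \KO^y$ for a suitable $\alpha < \omega_1^{\mathrm{CK},y}$), so one can build, recursively in $\KO^y$, a branch $x \in [T]$ which at its $n$-th splitting node goes ``left'' if $\KO^y(n) = 0$ and ``right'' otherwise. Then $x \in [T] \subseteq A$, and conversely $x \oplus T$ computes $\KO^y$: with oracle $T$ one locates the splitting nodes of $T$ along $x$ and reads off the left/right choices. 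Thus $\KO^y \leq_{\mathrm{T}} x \oplus T$.

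Finally I would close the loop via the hypothesis. Because $x \in A$ we have $y \leq_h x$, and since $T \leq_h y$ this gives $T \leq_h x$; hence $\Delta^1_1(x \oplus T) = \Delta^1_1(x)$, and $\KO^y \leq_{\mathrm{T}} x \oplus T$ then yields $\KO^y \leq_h x$. This $x$ is the required member of $A$.

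The step I expect to be the main obstacle is the first one: extracting a perfect subtree whose complexity is bounded by $y$ itself rather than only by $\KO^y$. This is exactly where it matters that $A$ is $\Delta^1_1(y)$ and not merely $\Sigma^1_1(y)$ --- for a $\Sigma^1_1(y)$ set the analogous analysis only delivers $T \leq_h \KO^y$, which would make the decoding argument above circular --- and pinning it down is essentially the content of Martin's theorem on Friedman's conjecture. The role of the extra hypothesis ``$y \leq_h z$ for all $z \in A$'' in this formulation is simply to force $T$ to be hyperarithmetic in whatever branch we construct, so that coding $\KO^y$ into the branchings of $x$ really does make $\KO^y$ hyperarithmetically recoverable from $x$ alone; absent such a hypothesis one would instead have to arrange $T$ to be a \emph{pointed} perfect tree, i.e.~one computed by each of its branches.
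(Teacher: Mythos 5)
The paper does not actually prove this statement; it imports it as a black box from Martin's work on Friedman's conjecture and Yu's later proof, so your attempt has to be judged as a from-scratch proof of that theorem. Your steps 2 and 3 (coding $\KO^y$ into the splitting nodes of a perfect subtree $T$ and decoding via $\KO^y \leq_T x \oplus T$, $T \leq_h y \leq_h x$) are fine, but step 1 is false, and it is exactly where the depth of the theorem sits. There is no effective perfect set theorem producing, from an uncountable $\Delta^1_1(y)$ set, a perfect subtree that is $\Delta^1_1(y)$. A counterexample lives already at the lightface $\Pi^0_2$ level: let $T_H$ be the Harrison tree, a recursive subtree of $\omega^{<\omega}$ whose branch set is uncountable (it is a $\Sigma^1_1$ set with a non-hyperarithmetic member, so it contains a perfect set) but has no hyperarithmetic branch, and let $A \subseteq \Cantor$ be the image of $[T_H]$ under the recursive homeomorphism $f \mapsto 0^{f(0)}10^{f(1)}1\cdots$. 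Then $A$ is an uncountable $\Pi^0_2$ set each of whose members is Turing-equivalent to a branch of $T_H$, hence not hyperarithmetic. But every infinite $\Delta^1_1$ binary tree has a $\Delta^1_1$ branch (its leftmost extendible branch is arithmetic in the tree), so $A$ contains no perfect subset with hyperarithmetic tree. Note that with $y = \emptyset$ the side hypothesis ``$y \leq_h z$ for all $z \in A$'' is vacuous, so this $A$ meets every hypothesis of the Fact; the Fact then asserts the genuinely nontrivial claim that $T_H$ has a branch $x$ with $\KO \leq_h x$, and your construction cannot reach it.

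Your own diagnosis of where the difficulty lies is therefore off target: the obstacle is not the $\Delta^1_1$-versus-$\Sigma^1_1$ distinction (the counterexample is $\Pi^0_2$), and Martin's theorem is not the assertion that the perfect kernel can be computed hyperarithmetically in $y$. What the Cantor--Bendixson/Mansfield--Solovay analysis of a $\Sigma^1_1(y)$ set actually yields is a perfect subtree recursive in $\KO^y$ --- deciding whether a node carries an uncountable part of the set is a $\Sigma^1_1(y)$ question about integers --- and, as you yourself observe, with $T \leq_T \KO^y$ the coding argument only returns $\KO^y \leq_h x \oplus \KO^y$, which is vacuous. The real content of Martin's theorem is that hyperdegrees at and above $\KO^y$ are nevertheless realized inside the set even though the perfect tree one can extract lives at the level of $\KO^y$; both Martin's argument and Yu's alternative proof need ideas well beyond a derivative analysis (uniformly pointed perfect trees and coding in the one case, higher randomness/genericity bases in the other). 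So the proposal, as written, does not prove the Fact.
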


\begin{fact}[Sacks \cite{Sacks69}]
\label{fact:delta11randomnonKO}
If $\KO \leq_h x$, then $x$ is not $\Delta^1_1(\KO)$-random.
\end{fact}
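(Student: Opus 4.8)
The plan is to capture every $x$ with $\KO \leq_h x$ inside a single $\Delta^1_1(\KO)$ null set. Since a $\Delta^1_1(\KO)$-random real avoids all $\Delta^1_1(\KO)$ null sets, it then cannot satisfy $\KO \leq_h x$, which is the contrapositive of the claim.

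First I would check that $\KO \leq_h x$ forces $\omega_1^x > \CK$. Indeed, $\KO \leq_h x$ means $\KO \leq_T x^{(\alpha)}$ for some $x$-recursive ordinal $\alpha < \omega_1^x$. From $\KO$ one can effectively produce a well-ordering of $\omega$ of order type $\geq \CK$ (for instance by summing a sequence of computable well-orders whose order types are cofinal in $\CK$, enumerated using $\KO$), and running the jump hierarchy relative to $x^{(\alpha)}$ along such a well-order witnesses $\omega_1^{x^{(\alpha)}} > \CK$. Since $\alpha < \omega_1^x$, the real $x^{(\alpha)}$ is hyperarithmetic in $x$, so every $x^{(\alpha)}$-recursive well-order is $\Delta^1_1(x)$ and thus has order type below $\omega_1^x$; hence $\omega_1^x \geq \omega_1^{x^{(\alpha)}} > \CK$.

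Next I would invoke Sacks: the set $S = \{x : \omega_1^x > \CK\}$ is $\Pi^1_1$, and $\lambda(S) = 0$ (that is, almost every real $x$ has $\omega_1^x = \CK$). Because $\KO$ is $\Pi^1_1$-complete, every $\Pi^1_1$ set is $\Sigma^0_1(\KO)$, so $S$ is in particular a $\Delta^1_1(\KO)$ set, and it is null. By the previous paragraph $\{x : \KO \leq_h x\} \subseteq S$, so any $x$ with $\KO \leq_h x$ lies in a $\Delta^1_1(\KO)$ null set and is therefore not $\Delta^1_1(\KO)$-random.

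The only nontrivial input is the measure computation $\lambda(\{x : \omega_1^x > \CK\}) = 0$; this is Sacks' theorem, and the rest is bookkeeping. A cosmetic variant that leans on a result already stated here: a $\Delta^1_1(\KO)$-random real is in particular $\Pi^1_1$-random (as $S$, being a $\Pi^1_1$ null set, is a $\Delta^1_1(\KO)$ null set), hence satisfies $\omega_1^x = \CK$ by Fact~\ref{fact:omega1ck}, which by the first step is incompatible with $\KO \leq_h x$.
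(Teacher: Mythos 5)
The paper states this Fact as a citation to Sacks and gives no proof of its own, so I can only judge your argument on its merits; it has a genuine gap at its central step. Your first paragraph is fine: $\KO \leq_h x$ does imply $\omega_1^x > \CK$, which is one direction of the Sacks equivalence the paper itself quotes later ($\omega_1^y = \CK$ iff $y \not\geq_h \KO$). The fatal step is the claim that $S = \{x : \omega_1^x > \CK\}$ is a $\Delta^1_1(\KO)$ set ``because every $\Pi^1_1$ set is $\Sigma^0_1(\KO)$.'' The $\Pi^1_1$-completeness of $\KO$ is a statement about subsets of $\omega$: every $\Pi^1_1$ set of \emph{integers} is many-one reducible to $\KO$. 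It implies nothing of the sort for $\Pi^1_1$ sets of \emph{reals}; a $\Delta^1_1(\KO)$ subset of $\Baire$ is in particular Borel, whereas, for example, the set of codes of well-orders is $\Pi^1_1$ and not Borel. Worse, for the specific set $S$ your conclusion is actually false rather than merely unjustified: relativizing the result from \cite{yuliang2} that the paper invokes in its Example in Section 2 (for every oracle $z$ there is a $\Delta^1_1(z)$-random $Y$ with $Y \geq_h \KO^z \geq_h \KO$), the set $S$ contains a $\Delta^1_1(z)$-random real for every $z$. Hence $S$, although null, is not contained in \emph{any} null Borel set, so in particular it is neither $\Delta^1_1(\KO)$ nor coverable by a single $\Delta^1_1(\KO)$ null test. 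The ``cosmetic variant'' at the end rests on the same assertion that $S$ is a $\Delta^1_1(\KO)$ null set, so it fails for the same reason.

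The upshot is that the Fact cannot be proved by exhibiting one $\Delta^1_1(\KO)$ null set that captures every $x \geq_h \KO$; one must produce, for each such $x$ separately, a $\Delta^1_1(\KO)$ null set containing it, and these covers necessarily vary with $x$ in a way that no single Borel set bounds. That is where the real content of Sacks' theorem lies: his measure-theoretic boundedness and uniformity results show, roughly, that for each index $e$ the measure of $\{w : \Phi_e^w \text{ is a well-order of type} \geq \beta\}$ is driven to $0$ by ordinals $\beta < \CK$, and the resulting covers are hyperarithmetic in $\KO$ because the relevant ordinals stay below $\CK < \omega_1^{\KO}$. Your closing remark that ``the only nontrivial input is $\lambda(S) = 0$ \ldots\ the rest is bookkeeping'' substantially understates what is needed: the passage from ``$S$ is null'' to ``every element of $S$ fails a $\Delta^1_1(\KO)$ test'' is precisely the hard part.
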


\begin{corollary}
\label{cor:delta11korandom}
If computable $f$ reflects $\Delta^1_1(\KO)$-randomness, then  for any $\Delta^1_1(\KO)$-random $y$ we find that $f^{-1}(y)$ is countable.
\end{corollary}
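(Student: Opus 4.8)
The plan is to argue by contraposition: assuming that $f$ reflects $\Delta^1_1(\KO)$-randomness and that $y$ is $\Delta^1_1(\KO)$-random, I suppose toward a contradiction that $A := f^{-1}(y)$ is uncountable, and I produce a point of $A$ that fails to be $\Delta^1_1(\KO)$-random.

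First I would check that $A$ falls under the scope of Fact \ref{fact:pi01classko}. Since $f$ is computable, $f^{-1}(y)$ is a $\Pi^0_1(y)$ class (at worst $\Delta^1_1(y)$ if $f$ is only partial), and by the standing assumption it is uncountable; moreover, for every $z \in A$ we have $f(z) = y$ with $f$ computable, so $y \leq_{\mathrm{T}} z$ and in particular $y \leq_h z$. Thus $A$ is an uncountable $\Delta^1_1(y)$-class all of whose members hyp-compute $y$, and Fact \ref{fact:pi01classko} yields some $x \in A$ with $\KO^y \leq_h x$. Next, since $\KO$ is a $\Pi^1_1$ set and $\KO^y$ is $\Pi^1_1(y)$-complete, we have $\KO \leq_m \KO^y$, hence $\KO \leq_h \KO^y \leq_h x$; by Fact \ref{fact:delta11randomnonKO} this means $x$ is not $\Delta^1_1(\KO)$-random. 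But $f(x) = y$ is $\Delta^1_1(\KO)$-random by hypothesis, so the assumption that $f$ reflects $\Delta^1_1(\KO)$-randomness forces $x$ to be $\Delta^1_1(\KO)$-random — a contradiction. Hence $f^{-1}(y)$ is countable.

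I do not expect a genuine obstacle here, since everything is a short assembly of the facts already quoted. The only place that calls for a moment's care is confirming that $f^{-1}(y)$ really does satisfy the hypotheses of Fact \ref{fact:pi01classko} — in particular that every member hyp-computes $y$, which is immediate from computability of $f$ — and noting that we only need $\KO \leq_h \KO^y$ (trivial by $\Pi^1_1$-completeness), not the stronger $\KO^y \equiv_h \KO$, so no appeal to $\omega_1^{\mathrm{CK},y} = \omega_1^{\mathrm{CK}}$ is required.
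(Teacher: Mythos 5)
Your proof is correct and follows essentially the same route as the paper: apply Fact \ref{fact:pi01classko} to the uncountable class $f^{-1}(y)$ to extract an $x$ with $\KO \leq_h x$, then use Fact \ref{fact:delta11randomnonKO} to contradict reflection. The paper simply leaves implicit the verifications you spell out (that $f^{-1}(y)$ is $\Delta^1_1(y)$ with every member hyp-computing $y$, and that $\KO \leq_h \KO^y$).
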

\begin{proof}
Assume that $y$ is $\Delta^1_1(\KO)$-random and $f^{-1}(y)$ is uncountable. Then by Fact \ref{fact:pi01classko}, there is some $x \in f^{-1}(y)$ with $\KO \leq_h x$. By Fact \ref{fact:delta11randomnonKO}, we find that $x$ is not $\Delta^1_1(\KO)$-random, contradicting that $f$ reflects $\Delta^1_1(\KO)$-randomness.
\end{proof}

\begin{observation}
\label{obs:countablefibers}
The following are equivalent for computable $f : \mathbb{R} \to \mathbb{R}$:
\begin{enumerate}
\item For almost all $y$ it holds that $f^{-1}(\{y\})$ is countable.
\item For every $\Delta^1_1$-random $y$ it holds that $f^{-1}(\{y\})$ is countable.
\end{enumerate}
\begin{proof}
The implication $2 \Rightarrow 1$ is trivial. For the other direction, note that $$\{y \mid f^{-1}(\{y\}) \textnormal{ is uncountable}\}$$ is $\Sigma^1_1$. This holds because for a $ \Sigma^1_1$-set $A$, being uncountable is equivalent to containing an element which is not hyperarithmetic relative to $A$. Due to Kleene's HYP-quantification theorem, an existential quantifier over non-hyperarithmetic elements is equivalent to an unrestricted existential quantifier. By assumption, it is a null set.
Any $\Sigma^1_1$-null set is contained in a $\Delta^1_1$-null set, so it is then contained in a $\Delta^1_1$-null set, and so cannot contain any $\Delta^1_1$-randoms.
\end{proof}
\end{observation}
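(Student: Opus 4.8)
The plan is to route both implications through a single fact: an analytic ($\Sigma^1_1$) set of positive Lebesgue measure must contain a $\Delta^1_1$-random real, since by Sacks' theorem \cite{Sacks69} every $\Sigma^1_1$-null set is contained in a $\Delta^1_1$-null set and no $\Delta^1_1$-random real lies in a $\Delta^1_1$-null set. So I would first isolate the set $B := \{ y \in \mathbb{R} : f^{-1}(\{y\}) \textnormal{ is uncountable}\}$ and argue that it is $\Sigma^1_1$; once that is done, both directions follow at once. For $2 \Rightarrow 1$ (the trivial direction): by Fact~\ref{fact:twodelta11randomness} the non-$\Delta^1_1$-random reals are the union, over the countably many hyperarithmetic oracles $\hat Z$, of the non-$\hat Z$-Martin-L\"of-random reals, and hence form a null set; if $2$ holds then $B$ is contained in this null set, which is $1$.

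The content lies in the converse, $1 \Rightarrow 2$, and the only genuine work there is showing $B \in \Sigma^1_1$. Since a computable $f : \mathbb{R} \to \mathbb{R}$ is continuous, each fiber $f^{-1}(\{y\})$ is closed, and it is $\Pi^0_1$ relative to $y$, uniformly in $y$. By the Cantor--Bendixson theorem a closed set is uncountable iff it has a nonempty perfect subset, i.e.\ iff there is a perfect tree $T$ with $[T] \subseteq f^{-1}(\{y\})$ --- and this last formulation is patently $\Sigma^1_1$ in $y$. Alternatively, staying closer to the higher-computability viewpoint of the paper, one uses that a countable $\Pi^0_1(y)$ class has Cantor--Bendixson rank below $\omega_1^{\mathrm{CK},y}$, so all of its members are hyperarithmetic in $y$, whereas an uncountable one has more than $\aleph_0$ members and therefore --- only countably many reals being $\leq_h y$ --- contains some $x \not\leq_h y$; thus $y \in B$ iff $\exists x\,(f(x) = y \wedge x \not\leq_h y)$, and this is $\Sigma^1_1$ because ``$f(x) = y$'' is $\Pi^0_1$ in $x \oplus y$ while ``$x \not\leq_h y$'' is $\Sigma^1_1$, with Kleene's HYP-quantification theorem guaranteeing that the quantifier over non-hyperarithmetic witnesses stays inside $\Sigma^1_1$.

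With $B \in \Sigma^1_1$ in hand, hypothesis $1$ says precisely that the (universally measurable) set $B$ is Lebesgue null; by Sacks' theorem $B \subseteq N$ for some $\Delta^1_1$-null set $N$, so no $\Delta^1_1$-random real lies in $B$, i.e.\ every $\Delta^1_1$-random $y$ has countable fiber, which is $2$. The step I expect to be the main obstacle --- and the one a careful reader will want spelled out --- is the claim that $B$ is $\Sigma^1_1$, that is, the effective Cantor--Bendixson analysis of the fibers (equivalently, the fact that uncountability of a fiber is witnessed by a perfect subtree, or by a member of the fiber not hyperarithmetic in $y$); granting that, the remainder is a routine application of Sacks' absorption of $\Sigma^1_1$-null sets into $\Delta^1_1$-null sets.
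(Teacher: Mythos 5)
Your proposal is correct and follows essentially the same route as the paper: the whole content is in showing that $\{y \mid f^{-1}(\{y\}) \text{ is uncountable}\}$ is $\Sigma^1_1$ via the criterion that an uncountable fiber must contain an element not hyperarithmetic in $y$ (while a countable $\Sigma^1_1(y)$ set consists only of reals $\leq_h y$), after which Sacks' absorption of $\Sigma^1_1$-null sets into $\Delta^1_1$-null sets finishes the argument exactly as in the paper.
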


\begin{theorem}
\label{theo:main}
The following are equivalent for computable $f : \mathbb{R} \to \mathbb{R}$:
\begin{enumerate}
\item $f$ satisfies Luzin's (N)
\item $f$ reflects $\Delta^1_1(\KO)$-randomness.
\item $f$ reflects $\Delta^1_1$-randomness and for almost all $y$, $f^{-1}(y)$ is countable.
\item $f$ reflects $\Pi^1_1$-randomness.
\end{enumerate}
\begin{proof}
That $(1)$ implies $(2)$ follows from Proposition \ref{prop:variants}. To see that $(2)$ implies $(1)$, we show that if $f$ reflects $\Delta^1_1(\KO)$-randomness, then it reflects $\Delta^1_1(r)$-randomness for all $r \geq_T \KO$.  {This will be enough because if $f$
reflects $\Delta^1_1(r)$-randomness for all $r \geq_T \KO$, then
for any $p \in \{0,1\}^\mathbb N$,
if $f(x) \in \mathrm{MLR}(\KO^{p\oplus \KO})$, then $f(x)$ is $\Delta^1_1(p\oplus \KO)$-random,
so $x$ is $\Delta^1_1(p \oplus \KO)$-random, so $x \in \mathrm{MLR}(p)$.
Thus $f$ satisfies
condition (2) of Proposition} \ref{prop:variants}.

So let $y$ be $\Delta^1_1(r)$-random for some $r \geq_T \KO$. By Corollary \ref{cor:delta11korandom}, $f^{-1}(y)$ is countable. So if $x \in f^{-1}(y)$, then $x \leq_h y$. Since $f$ reflects $\Delta^1_1(\KO)$-randomness, we know that $x$ is $\Delta^1_1$-random. We can thus invoke Theorem \ref{theo:milleryu} to conclude that $x$ is $\Delta^1_1(r)$-random, and have reached our goal.  (Note that $y$ is $\Pi^1_1$-random because $r\geq_T \KO$.)

To see that $(1)$ implies $(3)$ we use Proposition \ref{prop:variants} and Corollary \ref{cor:delta11korandom}. The proof that $(3)$ implies $(1)$ proceeds analogously to the proof that $(2)$ implies $(1)$, except that we conclude that $f^{-1}(\{y\})$ is countable from Observation \ref{obs:countablefibers} rather than Corollary \ref{cor:delta11korandom}.

To see that $(3)$ implies $(4)$, by Observation \ref{obs:countablefibers}, in fact the inverse image of every
$\Delta^1_1$-random point is countable.  In particular if $y$ is $\Pi^1_1$-random, then $f^{-1}(y)$ is countable.
We use the following characterization of $\Pi^1_1$-randomness due to Stern:
$y$  is $\Pi^1_1$-random if and only if $y$  is  $\Delta^1_1$-random and $\omega_1^y = \CK$  \cite{Stern1973, Stern1975, yuliang2}.
Recall also that $\omega_1^y =  \CK$ if and only if  $y \not\geq_h  \KO$ (\cite[Corollary 7.7]{sacks-hrt}).
Let $x \in f^{-1}(y)$.
By $\Delta^1_1$-randomness reflection, $x$ is $\Delta^1_1$-random.   Since  $f^{-1}(y)$ is countable,  $y \geq_h x$.
Thus  $x \not\geq_h \KO$.  Therefore  $x$ is $\Pi^1_1$-random.

The proof that $(4)$ implies $(1)$ is the same as how conditions $(2)$ or $(3)$ implied $(1)$.
Letting $y$ be $\Delta^1_1(r)$-random,
since $r\geq_T \KO$ then $y$ is $\Pi^1_1$-random.  If
$f^{-1}(y)$ were uncountable, by Fact \ref{fact:pi01classko}, it would contain some $x$ with $x \geq_h \KO$,
contradicting $\Pi^1_1$-randomness reflection.
\end{proof}
\end{theorem}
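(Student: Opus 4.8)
The plan is to establish the cycle $(1)\Rightarrow(2)\Rightarrow(1)$, $(1)\Leftrightarrow(3)$, and $(3)\Rightarrow(4)\Rightarrow(1)$, leaning on the machinery already assembled. The implications $(1)\Rightarrow(2)$ and $(1)\Rightarrow(3)$ should be essentially free: Proposition \ref{prop:variants} gives $\Delta^1_1$-randomness reflection relative to every oracle (hence in particular $\Delta^1_1(\KO)$-randomness reflection), and Corollary \ref{cor:delta11korandom} together with Observation \ref{obs:countablefibers} upgrades this to the countability of almost every fiber. The trivial directions $(3)\Rightarrow(1)$-type collapses are handled uniformly: in each case I would take $y$ to be $\Delta^1_1(r)$-random for an arbitrary $r\geq_T\KO$ — such a $y$ is automatically $\Pi^1_1$-random — and use whichever hypothesis is available, together with Fact \ref{fact:pi01classko} and Fact \ref{fact:delta11randomnonKO}, to argue that $f^{-1}(y)$ must be countable and then that each $x\in f^{-1}(y)$ is $\Delta^1_1(r)$-random. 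Running this for all such $r$ and then specializing $r$ to things of the form $\KO^{p\oplus\KO}$ recovers condition (2) of Proposition \ref{prop:variants}, hence Luzin's (N).

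The substantive implication is $(2)\Rightarrow(1)$ (and the structurally identical $(3)\Rightarrow(1)$, $(4)\Rightarrow(1)$). Here I would argue: given $f$ reflects $\Delta^1_1(\KO)$-randomness, fix $r\geq_T\KO$ and a $\Delta^1_1(r)$-random $y$. By Corollary \ref{cor:delta11korandom} the fiber $f^{-1}(y)$ is countable, so for any $x\in f^{-1}(y)$ we have $x\leq_h y$ (a countable $\Sigma^1_1$ set consists of points hyperarithmetic in its defining parameter — or more directly, a singleton fiber forces a hyp-reduction). Since $f$ reflects $\Delta^1_1(\KO)$-randomness, and $\Delta^1_1(r)$-random $y$ is in particular $\Delta^1_1(\KO)$-random, we get that $x$ is $\Delta^1_1$-random. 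Now the key input is Theorem \ref{theo:milleryu}: $x$ is $\Delta^1_1$-random, $y$ is $\Delta^1_1(r)$-random and $\Pi^1_1$-random (the latter because $r\geq_T\KO$ makes $y$ compute no hyperarithmetic analog of $\KO$... more precisely, $\Delta^1_1(\KO)$-randomness of $y$ already yields $\Pi^1_1$-randomness), and $x\leq_h y$; hence $x$ is $\Delta^1_1(r)$-random. That is exactly $\Delta^1_1(r)$-randomness reflection. The main obstacle is making sure the hypotheses of Theorem \ref{theo:milleryu} are genuinely met — in particular that $y$ is $\Pi^1_1$-random, not merely $\Delta^1_1(\KO)$-random; this is why the argument is phrased for $r\geq_T\KO$ rather than for $\KO$ itself, and it is the place where the higher-randomness content of Section \ref{sec:higherrandomness} is doing real work.

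For $(3)\Rightarrow(4)$ I would use Stern's characterization that $y$ is $\Pi^1_1$-random iff $y$ is $\Delta^1_1$-random and $\omega_1^y=\CK$, together with the fact that $\omega_1^y=\CK$ is equivalent to $y\not\geq_h\KO$. Given a $\Pi^1_1$-random $y$, Observation \ref{obs:countablefibers} makes $f^{-1}(y)$ countable, so any $x\in f^{-1}(y)$ satisfies $x\leq_h y$; then $x$ is $\Delta^1_1$-random by hypothesis, and $x\leq_h y\not\geq_h\KO$ forces $x\not\geq_h\KO$, hence $\omega_1^x=\CK$, hence $x$ is $\Pi^1_1$-random. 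Finally $(4)\Rightarrow(1)$ is again the template argument: for $r\geq_T\KO$ and $\Delta^1_1(r)$-random (hence $\Pi^1_1$-random) $y$, an uncountable $f^{-1}(y)$ would by Fact \ref{fact:pi01classko} contain an $x\geq_h\KO$, which cannot be $\Pi^1_1$-random, contradicting reflection; so the fiber is countable, every $x$ in it is hyp-below $y$ and $\Pi^1_1$-random by reflection, and one concludes as before. I expect no step here beyond the $(2)\Rightarrow(1)$ core to present real difficulty; the delicate point throughout is the bookkeeping around which oracle ($\KO$ versus a proper extension) guarantees $\Pi^1_1$-randomness of the relevant point.
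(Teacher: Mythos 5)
Your proposal is correct and follows essentially the same route as the paper: the same cycle of implications, the same use of Proposition \ref{prop:variants}, Corollary \ref{cor:delta11korandom}, Observation \ref{obs:countablefibers}, Fact \ref{fact:pi01classko}, Stern's characterization of $\Pi^1_1$-randomness, and Theorem \ref{theo:milleryu} as the engine of the $(2)\Rightarrow(1)$ direction, including the same care about why the $\Delta^1_1(r)$-random point $y$ (for $r \geq_T \KO$) is $\Pi^1_1$-random so that Theorem \ref{theo:milleryu} applies. No substantive differences from the paper's argument.
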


\subsection{A note on the countability of fibers}\label{sec:fibers}

We obtain as a corollary a reproof of a theorem by Banach \cite{banach2}, cf.~\cite[Chapter IX, Theorem 7.3]{saks}:

\begin{corollary}
\label{corr:banach}
If $f$ is continuous and satisfies Luzin's (N), then for almost all $y$ we find that $f^{-1}(y)$ is countable.
\end{corollary}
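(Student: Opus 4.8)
The plan is to derive Corollary \ref{corr:banach} directly from Theorem \ref{theo:main}, specifically from the equivalence of condition (1) with condition (3). Since $f$ is continuous and satisfies Luzin's (N), and since the results relativize, I may without loss of generality assume $f$ is computable (otherwise relativize everything below to an oracle computing $f$; the conclusion ``$f^{-1}(y)$ is countable for almost all $y$'' is unaffected by which measure-zero set of exceptions we allow). Then Theorem \ref{theo:main} applies, and condition (3) states exactly that $f$ reflects $\Delta^1_1$-randomness \emph{and} for almost all $y$, $f^{-1}(y)$ is countable. The second conjunct is precisely the desired conclusion.

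In more detail, the steps are: first, fix a continuous $f:\mathbb R\to\mathbb R$ with Luzin's (N), and let $r$ be an oracle relative to which $f$ is computable (such an $r$ exists since a continuous function on $\mathbb R$ is coded by a point in Baire space). Second, observe that ``$f$ satisfies Luzin's (N)'' relativizes to ``$f$ satisfies Luzin's (N) relative to $r$,'' which is just the same statement since Luzin's (N) is not itself a computability-theoretic notion. Third, apply the $r$-relativized form of Theorem \ref{theo:main}, whose condition (3) gives that for almost all $y$ (in fact, for every $\Delta^1_1(r)$-random $y$), the fiber $f^{-1}(y)$ is countable. Fourth, conclude that $f^{-1}(y)$ is countable for almost all $y$, since the $y$ that fail to be $\Delta^1_1(r)$-random form a null set.

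I do not expect any real obstacle here; the work has all been done in Theorem \ref{theo:main}, and this corollary is a matter of unwinding which relativized instance to invoke and noting that continuity guarantees $f$ is computable relative to some oracle. The only point requiring a sentence of care is the relativization: one should note explicitly that all the facts feeding into Theorem \ref{theo:main} (Sacks's results on $\Delta^1_1$-randomness, Fact \ref{fact:pi01classko}, Theorem \ref{theo:milleryu}) relativize, so the equivalence $(1)\Leftrightarrow(3)$ holds with every computability notion taken relative to $r$. Given that, the corollary is immediate.
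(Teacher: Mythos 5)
Your proposal is correct and matches the paper's intended argument: the paper states this corollary without an explicit proof precisely because it is the relativization of Theorem \ref{theo:main} (via the implication $(1)\Rightarrow(3)$, which rests on Proposition \ref{prop:variants} and Corollary \ref{cor:delta11korandom}) to an oracle computing $f$, exactly as you describe. Your added care about checking that the ingredients relativize is appropriate and consistent with the paper's own remark in the introduction that the results relativize.
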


{The following generalization to measurable functions was also known, but we give
a new proof.}

\begin{corollary}
\label{cor:delta11korandom2}
\begin{enumerate}
\item If $f$ satisfies Luzin's (N) and there are a continuous function $g$ and a Borel set $A$ so that $f$ agrees with $g$ on $A$, then for almost every real $y \in f(A)$ we find that $f^{-1}(y)\cap A$ is countable.
\item If $f$ satisfies Luzin's (N) and is measurable, then for almost every real $y$ we find that $f^{-1}(y)$ is countable.
\end{enumerate}
\begin{proof}
(1). Fix a real $x$ so that $g$ is computable in $x$ and $A$ is $\Delta^1_1(x)$. Assume that $y$ is $\Delta^1_1(\KO^x)$-random and $f^{-1}(y)\cap A=g^{-1}(y)\cap A$ is uncountable. Since $ A$ is $\Delta^1_1(x)$,  by Claim \ref{fact:pi01classko}, then there is some $z \in g^{-1}(y)\cap A=f^{-1}(y)\cap A $ with $\KO^x \leq_h z\oplus x$. By Fact \ref{fact:delta11randomnonKO}, we find that $z$ is not $\Delta^1_1(\KO^x)$-random, contradicting that $g$ reflects $\Delta^1_1(\KO^x)$-randomness.

(2). By Luzin's theorem, there are a sequence Borel sets $\{A_n\}_{n\in \omega}$ and continuous functions $\{g_n\}_{n\in \omega}$ so that $\mathbb{R}\setminus \bigcup_n A_n$ is null and $f$ agrees with $g_n$ over $A_n$ for every $n$. As $f$ has Luzin's (N), also $f[\mathbb{R}\setminus \bigcup_n A_n]$ is null, and thus can be ignored for our argument. For $y \notin f[\mathbb{R}\setminus \bigcup_n A_n]$, we find that $f^{-1}(y) \subseteq \bigcup_{n \in \mathbb{N}} g_n^{-1}(y)$. Since each set in the right-hand union is countable for almost all $y$ by Corollary \ref{corr:banach}, the union itself is countable for almost all $y$, proving the claim.
\end{proof}
\end{corollary}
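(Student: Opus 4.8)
The plan is to prove part (1) directly by relativizing the reasoning behind Corollary \ref{cor:delta11korandom}, and then to deduce part (2) from part (1) via the classical Luzin theorem on the continuity of measurable functions. For part (1), fix an oracle $p$ with $g \leq_T p$ and $A \in \Delta^1_1(p)$. Since $f$ and $g$ agree on $A$, we have $f^{-1}(y) \cap A = g^{-1}(y) \cap A$ for every $y$, so it suffices to show that $U := \{y : g^{-1}(y) \cap A \text{ is uncountable}\}$ is Lebesgue null; arguing exactly as in Observation \ref{obs:countablefibers} (uncountability of a $\Sigma^1_1$ class being equivalent to containing a non-hyperarithmetic member, together with Kleene's HYP quantification theorem), $U$ is $\Sigma^1_1(p)$, hence measurable.

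Suppose toward a contradiction that $U$ is not null. Since the $\Delta^1_1(\KO^p)$-random reals are conull, we may fix a $\Delta^1_1(\KO^p)$-random $y \in U$. Then $g^{-1}(y) \cap A$ is uncountable, so $\{p \oplus z : z \in g^{-1}(y) \cap A\}$ is an uncountable $\Delta^1_1(p \oplus y)$-class, and every one of its members hyp-computes $p \oplus y$ because $y = g(z) \leq_T p \oplus z$. Applying Fact \ref{fact:pi01classko} with oracle $p \oplus y$ yields some $z \in g^{-1}(y) \cap A$ with $\KO^{p \oplus y} \leq_h p \oplus z$, and hence $\KO^p \leq_h p \oplus z$ by monotonicity of the hyperjump.

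Now set $N := \{t : \KO^p \leq_h p \oplus t\}$; this is $\Sigma^1_1(\KO^p)$, and it is null: a $\Delta^1_1(\KO^p)$-random real $t$ is in particular $\Pi^1_1(p)$-random (the relativization of the fact used already in the proof of Theorem \ref{theo:main}, since every $\Pi^1_1(p)$-null set is $\Delta^1_1(\KO^p)$-null), so by the relativization of Fact \ref{fact:omega1ck} it satisfies $\omega_1^{p \oplus t} = \omega_1^p$ and therefore $p \oplus t \not\geq_h \KO^p$; thus $N$ is contained in the null set of non-$\Delta^1_1(\KO^p)$-randoms. Consequently $N \cap A$ is a null $\Sigma^1_1(\KO^p)$-set, so by Luzin's (N) for $f$ its image $f(N \cap A) = g(N \cap A)$ is Lebesgue null; being the image of a $\Sigma^1_1(\KO^p)$-set under a $\KO^p$-computable map it is $\Sigma^1_1(\KO^p)$, hence contained in a $\Delta^1_1(\KO^p)$-null set and so contains no $\Delta^1_1(\KO^p)$-random. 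But $z \in N \cap A$ gives $y = f(z) \in f(N \cap A)$, contradicting $\Delta^1_1(\KO^p)$-randomness of $y$. This proves (1). For (2), apply Luzin's theorem to pick, for each $n$, a closed set $A_n$ with $\lambda(\mathbb{R} \setminus A_n) < 2^{-n}$ on which $f$ is continuous and (by Tietze) a continuous $g_n$ extending $f \upharpoonright A_n$; then $\mathbb{R} \setminus \bigcup_n A_n$ is null, so $f(\mathbb{R} \setminus \bigcup_n A_n)$ is null by Luzin's (N), and for $y$ outside this image $f^{-1}(y) = \bigcup_n (g_n^{-1}(y) \cap A_n)$. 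Applying part (1) to each pair $(g_n, A_n)$ and discarding the union of the resulting null sets (together with $f(\mathbb{R} \setminus \bigcup_n A_n)$) leaves $f^{-1}(y)$ a countable union of countable sets.

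The step I expect to be the main obstacle is threading the oracles correctly in part (1): one cannot simply assert that $f$ or $g$ ``reflects $\Delta^1_1(\KO^p)$-randomness'', since $f$ need not be computable in $p$ and $g$ need not satisfy Luzin's (N). The resolution is to extract one concrete witness $z$ in the fiber that hyp-computes $\KO^p$, and then to use Luzin's (N) --- the only global hypothesis on $f$ --- by pushing the explicit $\Sigma^1_1(\KO^p)$-null set $N \cap A$ forward through $f$ and landing the contradiction inside a $\Delta^1_1(\KO^p)$-null set via Sacks' theorem that every $\Sigma^1_1$-null set is contained in a $\Delta^1_1$-null set, all relativized to $\KO^p$.
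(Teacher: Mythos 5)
Your proof is correct, and its skeleton matches the paper's: part (1) relativizes the argument of Corollary \ref{cor:delta11korandom}, using Fact \ref{fact:pi01classko} to extract from an uncountable fiber a point $z$ with $\KO^{p}\leq_h p\oplus z$, and part (2) decomposes $f$ via Luzin's theorem and reduces to part (1). Where you genuinely diverge is the concluding step of part (1), and your version is the more complete one. The paper closes by declaring a contradiction with ``$g$ reflects $\Delta^1_1(\KO^x)$-randomness,'' but that reflection property is never established and does not follow from the hypotheses: only $f$ is assumed to have Luzin's (N), $g$ is an arbitrary continuous function agreeing with $f$ on $A$, and $f$ itself need not be computable in the oracle, so the relativized Theorem \ref{theo:main} applies to neither function. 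You flag exactly this obstacle and route around it by exhibiting the concrete null set $N\cap A$ with $N=\{t : \KO^p\leq_h p\oplus t\}$, pushing it forward through $f$ (where Luzin's (N) genuinely is available), and noting that the image is a null $\Sigma^1_1(\KO^p)$ set, hence contained in a $\Delta^1_1(\KO^p)$ null set and so cannot contain the random $y$. The price is the extra (true, but not entirely obvious) complexity claim that $N$ is $\Sigma^1_1(\KO^p)$, which follows from the relativized equivalence between $\KO^p\leq_h p\oplus t$ and $\omega_1^{p\oplus t}>\omega_1^{p}$; the payoff is an argument with no unjustified appeal to randomness reflection. Similarly, in part (2) you cite part (1) for the countability of $g_n^{-1}(y)\cap A_n$, whereas the paper cites Corollary \ref{corr:banach}, which strictly speaking would require each $g_n$ to have Luzin's (N); again your version is the careful one.
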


{Note that the Borelness of the set $A$ in the corollary above cannot be replaced by ``arbitrary set", as it is consistent with $\mathrm{ZFC}$ that the corresponding statement is false. For example, assuming the continuums hypothesis ($\mathrm{CH}$) or the even weaker Martin's axiom ($\mathrm{MA}$) suffices to construct a counterexample. We do not know whether the following proposition can be proved within $\mathrm{ZFC}$.}

\begin{proposition}[$\mathrm{ZFC}+\mathrm{MA}$]\label{proposition: weird luzin n function}
There is a function $f : \uint \to \uint$ having Luzin's (N) and a set $A \subseteq \uint$ such that $f|_{A}$ is a computable, $f[A]$ is non-null, and for any $y \in f[A]$ the set $f^{-1}(\{y\}) \cap A$ is uncountable.
\begin{proof}
We actually need only a weaker condition than $\mathrm{MA}$ for our construction, namely the equality $\operatorname{cof}(\mathcal{L}) = \operatorname{cov}(\mathcal{L}) = \operatorname{non}(\mathcal{L})$ in Cicho\'n's diagram. Recall that $\operatorname{cof}(\mathcal{L})$ is the least cardinality of a set $R$ of null sets such that any null set is a subset of an element of $R$; $\operatorname{cov}(\mathcal{L})$ is the least cardinal $\alpha$ such that $\uint$ is a union of $\alpha$-many null sets, and $\operatorname{non}(\mathcal{L})$ is the least cardinal of a non-null set. {It is a consequence of $\mathrm{MA}$ that all these cardinals are $2^{\aleph_0}$. As they all are clearly uncountable and at most the continuum, $\mathrm{CH}$ trivially implies the same.} Let $\kappa$ denote the value of these three invariants.

First, we observe that $\kappa = \operatorname{cof}(\mathcal{L})$ means that there exists a family $(z_\alpha)_{\alpha < \kappa}$ such that a set $A \subseteq \Cantor$ is null iff $A \subseteq \mathrm{MLR}(z_\alpha)^C$ for some $\alpha < \kappa$. Next, we point out that $\kappa = \operatorname{cov}(\mathcal{L})$ means that for any $\alpha < \kappa$ and family $(w_\beta)_{\beta < \alpha}$ there exists some $u$ which is Martin-L\"of random relative to all $w_\beta$.

We start with a family $(z_\alpha)_{\alpha < \kappa}$ as above, and then choose $(x_\alpha)_{\alpha < \kappa}$ such that each $x_\alpha$ is Martin-L\"of random relative to any $z_\beta$ for $\beta \leq \alpha$. We then choose another sequence $(y_\alpha)_{\alpha < \kappa}$ such that $y_\alpha$ is Martin-L\"of random relative to any $x_\beta \oplus z_\gamma$ for $\beta, \gamma \leq \alpha$. We identify $\Cantor$ with a positive measure subset of $\uint$ (a fat Cantor set), and then define $A = \{y_\beta \oplus x_\alpha \mid \alpha \leq \beta < \kappa\}$, and $f : \uint \to \uint$ as $f(y_\beta \oplus x_\alpha) = x_\alpha$ and $f(w) = 0$ for $w \notin A$.

As $f|_A$ is just the projection, it is clear that it is computable. To see that $f[A]$ is non-null, note that if it were null, it would need to be contained in $\mathrm{MLR}(z_\alpha)^C$ for some $\alpha < \kappa$. But $x_\alpha \in f[A]$ is explicitly chosen to prevent this. For any $x_\alpha \in f[A]$, we find that $f^{-1}(\{x_\alpha\}) \cap A = \{y_\beta \oplus x_\alpha \mid \alpha \leq \beta < \kappa\}$ has cardinality $\kappa$, and $\kappa$ is uncountable.

It remains to argue that $f$ has Luzin's (N). As $f$ is constant outside of $A$, we only need to consider null sets $B \subseteq A$. Again invoking van Lambalgen's theorem, we see that any $y_\beta \oplus x_\alpha$ is Martin-L\"of random relative to $z_\gamma$ whenever $\gamma \leq \alpha \leq \beta$. As such, each null $B \subseteq A$ is contained in some $\{y_\beta \oplus x_\alpha \mid \alpha \leq \beta < \gamma\}$ for $\gamma < \kappa$. It follows that $f[B] \subseteq \{x_\alpha \mid \alpha < \gamma\}$ has cardinality strictly below $\kappa$, and hence is null due to $\kappa = \operatorname{non}(\mathcal{L})$.
\end{proof}
\end{proposition}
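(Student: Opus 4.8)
The plan is to exploit the equality $\operatorname{cof}(\mathcal{L}) = \operatorname{cov}(\mathcal{L}) = \operatorname{non}(\mathcal{L})$ in Cicho\'n's diagram, which follows from $\mathrm{MA}$ (and trivially from $\mathrm{CH}$, since all three invariants are uncountable and at most the continuum), and to restate the three invariants in randomness-theoretic terms. Writing $\kappa$ for the common value, I would first use $\operatorname{cof}(\mathcal{L}) = \kappa$ together with the fact that the sets $\mathrm{MLR}(z)^C$ are cofinal (under $\subseteq$) among null sets to fix a family $(z_\alpha)_{\alpha < \kappa}$ such that a set $A \subseteq \Cantor$ is null iff $A \subseteq \mathrm{MLR}(z_\alpha)^C$ for some $\alpha < \kappa$. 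Second, I would note that $\operatorname{cov}(\mathcal{L}) = \kappa$ means exactly that fewer than $\kappa$ null sets cannot cover $\uint$; equivalently, given any family of $<\kappa$ oracles there is a real that is Martin-L\"of random relative to all of them (since each $\mathrm{MLR}(w)^C$ is null).

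Next I would build, by transfinite recursion of length $\kappa$, two sequences $(x_\alpha)_{\alpha<\kappa}$ and $(y_\alpha)_{\alpha<\kappa}$: choose $x_\alpha$ Martin-L\"of random relative to every $z_\beta$ with $\beta \le \alpha$, and then choose $y_\alpha$ Martin-L\"of random relative to every $x_\beta \oplus z_\gamma$ with $\beta,\gamma \le \alpha$. Each step is legitimate because only $<\kappa$ oracles are involved at that stage and $\operatorname{cov}(\mathcal{L}) = \kappa$. Identifying $\Cantor$ with a positive-measure (fat Cantor) subset of $\uint$, I set $A = \{y_\beta \oplus x_\alpha \mid \alpha \le \beta < \kappa\}$ and define $f(y_\beta \oplus x_\alpha) = x_\alpha$, and $f(w)=0$ for $w \notin A$. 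Then $f|_A$ is just a projection, hence computable; $f[A] = \{x_\alpha \mid \alpha<\kappa\}$ is non-null, since if it were null it would be contained in some $\mathrm{MLR}(z_\alpha)^C$, contradicting the choice of $x_\alpha$; and for each $x_\alpha \in f[A]$ the fiber $f^{-1}(\{x_\alpha\}) \cap A = \{y_\beta \oplus x_\alpha \mid \alpha \le \beta < \kappa\}$ has cardinality $\kappa$, which is uncountable.

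The remaining task, which I expect to be the technical heart, is verifying that $f$ has Luzin's (N). Since $f$ is constant off $A$, it is enough to treat null $B \subseteq A$. Using van Lambalgen's theorem together with the construction, I would show that $y_\beta \oplus x_\alpha$ is Martin-L\"of random relative to $z_\gamma$ whenever $\gamma \le \alpha \le \beta$: indeed $y_\beta$ was chosen Martin-L\"of random over $x_\alpha \oplus z_\gamma$, and $x_\alpha$ is Martin-L\"of random over $z_\gamma$, so the join $y_\beta \oplus x_\alpha$ is Martin-L\"of random over $z_\gamma$. Now a null $B \subseteq A$ is contained in $\mathrm{MLR}(z_\gamma)^C$ for some $\gamma < \kappa$, hence $B$ can only contain points $y_\beta \oplus x_\alpha$ with $\alpha < \gamma$; therefore $f[B] \subseteq \{x_\alpha \mid \alpha < \gamma\}$ has cardinality strictly below $\kappa = \operatorname{non}(\mathcal{L})$ and is thus null, which is what we want.

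The main obstacle is the bookkeeping in the recursion and in the van Lambalgen argument: one must arrange the mutual-randomness requirements on the $x_\alpha$ and $y_\alpha$ so that the cofinal family $(z_\gamma)_{\gamma<\kappa}$ is guaranteed to "capture" every null subset of $A$ at some stage $\gamma < \kappa$, while at the same time the index bound forced on that subset makes its $f$-image have cardinality $<\kappa$. Everything else — computability of the projection, non-nullity of $f[A]$, uncountability of the fibers — is routine once the recursion is set up.
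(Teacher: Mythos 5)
Your proposal is correct and follows essentially the same route as the paper's own proof: the same randomness-theoretic reformulations of $\operatorname{cof}(\mathcal{L})$ and $\operatorname{cov}(\mathcal{L})$, the same transfinite construction of $(x_\alpha)$ and $(y_\alpha)$, the same definition of $A$ and $f$, and the same van Lambalgen argument for Luzin's (N). Your phrasing of the final containment (only the index $\alpha$ of $x_\alpha$ is bounded by $\gamma$, which suffices since $f$ projects onto that coordinate) is in fact slightly more careful than the paper's.
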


That all fibers are countable is just preservation of $h$-degrees:

\begin{observation}
\label{obs:hypdegree}
For computable $f :  \mathbb{R} \to \mathbb{R}$ the following are equivalent:
\begin{enumerate}
\item $f$ preserves $h$-degrees, i.e.~$\forall x \in \uint \ x \equiv_h f(x)$.
\item For all $y \in \mathbb{R}$, $f^{-1}(y)$ is countable.
\end{enumerate}
\end{observation}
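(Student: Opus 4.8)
The plan is to prove the two implications separately; both are short, and the only ingredient that goes beyond bookkeeping is the effective perfect set property already invoked in the proof of Observation~\ref{obs:countablefibers}.

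First I would handle $(1) \Rightarrow (2)$. Assume $x \equiv_h f(x)$ for every $x$. Fix $y$ and let $x \in f^{-1}(y)$; then $x \equiv_h f(x) = y$, so in particular $x \leq_h y$. Hence $f^{-1}(y) \subseteq \{x : x \leq_h y\}$, and the latter set is countable, so $f^{-1}(y)$ is countable. (If one is careful about the condition $(1)$ being stated for $x \in \uint$ rather than all of $\mathbb R$, one restricts attention to $f^{-1}(y) \cap \uint$, which changes nothing in the argument.)

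For $(2) \Rightarrow (1)$, note first that since $f$ is computable it is continuous and $f(x) \leq_T x$, hence $f(x) \leq_h x$ for all $x$. For the reverse inequality, fix $x$ and set $y = f(x)$. The fiber $f^{-1}(\{y\})$ is closed, and effectively closed relative to $y$ (as $f$ is computable); in particular it is a $\Sigma^1_1(y)$ set, and it is countable by hypothesis $(2)$. By the effective perfect set theorem --- a $\Sigma^1_1(y)$ set containing an element that is not hyperarithmetic in $y$ must be uncountable, which is exactly the fact used in the proof of Observation~\ref{obs:countablefibers} --- every member of $f^{-1}(\{y\})$ is hyperarithmetic in $y$. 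In particular $x \leq_h y = f(x)$, and together with the previous sentence this gives $x \equiv_h f(x)$, which is $(1)$.

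I do not expect a genuine obstacle here: the whole argument is essentially a repackaging of the principle that countable $\Sigma^1_1$ sets consist of hyperarithmetic points. The only places where mild care is needed are in confirming that $f^{-1}(\{y\})$ is $\Sigma^1_1$ (indeed $\Pi^0_1$) relative to $y$, which follows from computability of $f$ in the standard way, and in the cosmetic observation that a real number and its binary codes in $\Cantor$ agree up to computable equivalence (dyadic rationals being computable), so that the hyperdegree of a real is well defined.
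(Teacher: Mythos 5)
Your proof is correct, and it is exactly the argument the paper intends: the paper states this as an observation with no proof at all, and the two ingredients you use --- that the hyperarithmetic cone below $y$ is countable, and that a countable $\Sigma^1_1(y)$ (here even $\Pi^0_1(y)$) set consists only of points hyperarithmetic in $y$ --- are precisely the facts already deployed in the proof of Observation~\ref{obs:countablefibers}. The mismatch you note between ``$\forall x \in \uint$'' in item (1) and the domain $\mathbb{R}$ elsewhere is a typo in the paper, and your reading is the right one.
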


\subsection{Luzin's (N) and higher Kurtz randomness reflection}\label{sec:kurtz}

In his thesis \cite{Luzin1951}, {Luzin showed that if a continuous function
$f:\mathbb R \rightarrow \mathbb R$ fails to have property $(N)$, then
in fact there is a compact witness to this failure.  For the reader's
convenience, we give a proof of this fact below.}
\begin{proposition}
\label{prop:closedsetsuffice}
Let {$f : \mathbb R \to \mathbb R$} be continuous and map some null set to a non-null set. Then there is a {compact subset $A \subseteq \mathbb R$} with $\lambda(A) = 0$ and $\lambda(f(A)) > 0$.
\begin{proof}
{Observe that a function $f:\mathbb R\rightarrow \mathbb R$ satisfies Luzin's (N)
if and only if its restriction $f\upharpoonright [a,b]$ satisfies Luzin's (N) for every
closed interval $[a,b]$.  So without loss of generality, we assume
that $f$ fails Lusin's (N) because $\mu(A) = 0$  but $\mu(f(A)) = d > 0$
for some $A \subseteq [a,b]$.}  As every null set is contained in a $\mathbf{\Pi}^0_2$-null set, without loss of generality we can assume  $A = \cap_n U_n$  for some decreasing sequence of open sets $U_n$ . Each $U_n$ is itself equal to an increasing union of closed sets. The idea is by picking big enough closed $F_n \subseteq U_n$ , we can find a closed set $\bigcap_{n \in \mathbb{N}} F_n \subseteq A$  whose image still has positive measure.  How large to pick the $F_n$?  Let $F_0 \subseteq U_0$ be large enough that $\mu(f(F_0\cap A)) > d/2$.  In general, if we have found $(F_i)_{i<n}$ such that $\mu(f(\cap_{i<n} F_i \cap A)) > d/2$, then since $A \subseteq U_n$, we can find closed $F_n \subseteq U_n$ large enough that  $\mu(f(\cap_{i<n} F_i \cap F_n \cap A))>d/2$  as well.  Therefore for all  $n$, we have $\mu(f(\cap_{i<n} F_i)) > d/2$, and therefore $\mu(\cap_n f(\cap_{i<n} F_i)) \geq d/2$.  Claim:  $\cap_n f(\cap_{i<n} F_i) = f(\cap_n F_n)$.  One direction is clear.  In the other, suppose that  $y \in \cap_n f(\cap_{i<n} F_i)$.  Then $\cap_{i<n} F_n \cap f^{-1}(y)  \neq \emptyset$ for all $n$.  By compactness, $\cap_n F_n \cap f^{-1}(y) \neq \emptyset$.
\end{proof}
\end{proposition}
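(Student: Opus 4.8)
The plan is the classical ``exhaustion by closed sets'' argument. First I would reduce to a bounded picture: $f$ has Luzin's (N) if and only if each restriction $f\uhr[a,b]$ does, so I may fix a closed interval $[a,b]$ together with a set $A \subseteq [a,b]$ with $\lambda(A)=0$ but $\lambda(f(A)) = d > 0$. Enlarging $A$ only enlarges $f(A)$, so --- using that every null set is contained in a $\mathbf{\Pi}^0_2$ null set --- I may further assume $A = \bigcap_n U_n$ with the $U_n \subseteq [a,b]$ open and decreasing.

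Next I would construct, by recursion on $n$, compact sets $F_n \subseteq U_n$ so that the invariant $\lambda\big(f(\bigcap_{i\le n} F_i \cap A)\big) > d/2$ holds at every stage. Writing $U_n$ as an increasing union $\bigcup_k K_{n,k}$ of compact sets and using $A \subseteq U_n$, the sets $f(\bigcap_{i<n}F_i \cap K_{n,k} \cap A)$ increase in $k$ to $f(\bigcap_{i<n}F_i \cap A)$, whose measure exceeds $d/2$ by the invariant at the previous stage (for $n=0$ this union converges to $f(A)$, of measure $d$); by continuity of $\lambda$ from below, some $K_{n,k}$ already has image-measure $> d/2$, and I take $F_n := K_{n,k}$. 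Throughout I use that continuous images of Borel sets are analytic, hence Lebesgue measurable, so all these measures make sense.

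Then I would put $A^* := \bigcap_n F_n$. Since $F_n \subseteq U_n$ we get $A^* \subseteq A$, so $\lambda(A^*) = 0$, and $A^*$ is a closed subset of $[a,b]$, hence compact. The crux is the identity $f(A^*) = \bigcap_n f(\bigcap_{i<n}F_i)$. The inclusion $\subseteq$ is immediate. Conversely, if $y$ belongs to every $f(\bigcap_{i<n}F_i)$, then the sets $\bigcap_{i<n}F_i \cap f^{-1}(y)$ form a decreasing chain of nonempty compact sets, so by compactness their intersection $A^* \cap f^{-1}(y)$ is nonempty, i.e.\ $y \in f(A^*)$. Finally, the sets $f(\bigcap_{i<n}F_i)$ decrease, have finite measure (they lie in $f([a,b])$), and each has measure $> d/2$ since it contains $f(\bigcap_{i\le n}F_i \cap A)$; continuity of $\lambda$ from above then yields $\lambda(f(A^*)) \ge d/2 > 0$, as desired.

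The one step deserving genuine care --- and the place I would slow down --- is the recursion: one must verify that the hypothesis $A \subseteq U_n$ really guarantees that exhausting $U_n$ from inside by compact sets recovers all of the image-measure built up so far, so that passing from $U_n$ to a sufficiently large compact $F_n \subseteq U_n$ costs nothing. Once that is set up correctly, the compactness argument for the claimed identity and the two appeals to continuity of Lebesgue measure are routine.
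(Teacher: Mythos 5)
Your proposal is correct and follows essentially the same route as the paper's proof: the same reduction to a closed interval and a $\mathbf{\Pi}^0_2$ null set, the same recursive choice of compact $F_n \subseteq U_n$ maintaining $\lambda(f(\bigcap_{i\le n}F_i\cap A))>d/2$, and the same compactness argument identifying $f(\bigcap_n F_n)$ with $\bigcap_n f(\bigcap_{i<n}F_i)$. The extra details you supply (measurability of analytic images, continuity of measure from below/above) are exactly the routine justifications the paper leaves implicit.
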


{As the image of as images of compact sets under continuous functions are uniformly compact,
this shows that Luzin's (N) implies the reflection of all kinds of Kurtz randomness.
This is in contrast to the situation for Martin-L\"of randomness, because the
image of a $\mathbf \Pi^0_2$ set under a continuous $f$ is not even $\mathbf \Pi^0_2$
in general, let alone with the same oracle.}
\begin{proposition}\label{prop:allkurtzreflection}
{If $f:\mathbb R \rightarrow \mathbb R$ satisfies Luzin's (N), then $f$ reflects
Kurtz randomness relative to every oracle.}
\end{proposition}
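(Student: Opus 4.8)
The plan is to prove the contrapositive, relative to a fixed but arbitrary oracle $q$: if $x$ is not Kurtz random relative to $q$, then neither is $f(x)$. Since $f$ is computable it is computable relative to every $q$, so treating one oracle at a time costs nothing. (If one only wishes to assume $f$ continuous, the same argument runs relative to any oracle computing $f$, which is the only sensible reading of the statement in that generality.)

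First I would convert the failure of $q$-Kurtz randomness into a \emph{compact} witness. By definition there is a null $\Pi^0_1(q)$ class $C\subseteq\mathbb{R}$ with $x\in C$; fixing rationals $a<x<b$ (possible since every open interval contains a rational) and putting $C'=C\cap[a,b]$, the set $C'$ is a compact null $\Pi^0_1(q)$ class still containing $x$. Then $f(C')$ is compact because $f$ is continuous, $f(C')$ is null because $C'$ is null and $f$ has Luzin's (N), and $f(x)\in f(C')$. So once we also know that $f(C')$ is a $\Pi^0_1(q)$ class, $f(C')$ is a null $\Pi^0_1(q)$ test witnessing that $f(x)$ is not Kurtz random relative to $q$, which is exactly what is wanted.

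The only step needing genuine care — and precisely the content of the ``uniformly compact'' remark preceding the statement — is that $f(C')$ is $\Pi^0_1(q)$, i.e.\ that a $\Sigma^0_1(q)$ presentation of $\mathbb{R}\setminus f(C')$ is computable from $q$. I would first note that $C'$, being a co-c.e.\ closed subset of the computably compact interval $[a,b]$ with $a,b$ rational, is computably compact relative to $q$: to semidecide that rational open intervals $J_1,\dots,J_k$ cover $C'$, it suffices to semidecide that the compact set $[a,b]\setminus\bigcup_l J_l$ is contained in the ($q$-c.e.) open-in-$[a,b]$ set $[a,b]\setminus C'$. Then I would invoke the standard fact that the image of a computably compact set under a computable map is computably compact, uniformly: $f(C')\subseteq\bigcup_l J_l$ iff $C'\subseteq\bigcup_l f^{-1}(J_l)$, whose right-hand side is a c.e.\ open set, so the inclusion is semidecidable relative to $q$. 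Since a computably compact set is co-c.e.\ closed, $f(C')$ is a $\Pi^0_1(q)$ class, finishing the proof.

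I expect the only real friction to be bookkeeping: fixing once and for all which equivalent formulation of Kurtz randomness is in use (a null $\Pi^0_1(q)$ class to avoid, equivalently a $\Sigma^0_1(q)$ open set of measure one to hit), and keeping the reduction to a bounded $[a,b]$-localized witness tidy. No ingredient beyond Luzin's (N) and the elementary effective-topology facts above is needed; in particular, unlike the Martin-L\"of case, no control over images of $\mathbf\Pi^0_2$ sets is required, which is the whole point of the remark that distinguishes the two situations.
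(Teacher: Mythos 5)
Your proposal is correct and follows essentially the same route as the paper: take a $q$-computable compact null witness containing $x$, push it forward under $f$, use Luzin's (N) to see the image is null, and observe that the image is again a $q$-computable compact set (the "uniformly compact" fact stated just before the proposition). The paper's proof is just a terser version of yours, leaving the localization to $[a,b]$ and the effective-compactness bookkeeping implicit.
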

\begin{proof}
{ Given oracle $Z$,
suppose that $x$ is not $Z$-Kurtz random because $x \in F$, where
$F$ is a $Z$-computable compact set of measure 0.  Then $f(F)$
is also a $Z$-computable compact set, which has measure 0 because
$f$ has Luzin's (N).  Therefore, $f(x)$ is also not $Z$-Kurtz random.
}
\end{proof}
{An immediate consequence is that any $f$ with Luzin's (N)
also reflects $\Delta^1_1$-Kurtz randomness
relative to every oracle.  }
{In general, Kurtz randomness reflection for stronger oracles implies it for weaker ones.}
\begin{proposition}\label{prop:krr_implication}
{If a continuous function $f:\mathbb R \rightarrow \mathbb R$
reflects $Z$-Kurtz randomness, then $f$ reflects $X$-Kurtz randomness
for every $X \leq_T Z$.}
\end{proposition}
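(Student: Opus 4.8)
The plan is to argue by contraposition. Suppose $f$ reflects $Z$-Kurtz randomness and $X \leq_T Z$, and suppose toward a contradiction that $f$ does not reflect $X$-Kurtz randomness, so that there is a point $x$ with $f(x)$ being $X$-Kurtz random while $x$ is not $X$-Kurtz random. The naive idea---take the $X$-computable compact null witness for $x$, observe it is also $Z$-computable, and feed $x$ into the $Z$-hypothesis---does not work, because reflection of $Z$-Kurtz randomness is a statement about the $Z$-Kurtz randomness of \emph{both} $x$ and $f(x)$, whereas $f(x)$ being $X$-Kurtz random gives no handle on whether $f(x)$ is $Z$-Kurtz random ($X$-Kurtz randomness being the weaker notion). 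This mismatch is the main obstacle, and the device to circumvent it is a positive-measure case split on the image.

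In detail: fix an $X$-computable compact set $F$ with $\lambda(F) = 0$ and $x \in F$ (replacing the witnessing closed set by its intersection with a bounded interval around $x$ if necessary). Since $X \leq_T Z$, $F$ is also a $Z$-computable compact null set, so $x$ fails to be $Z$-Kurtz random as well. Now set $G := f(F)$. As in the proof of Proposition \ref{prop:allkurtzreflection}, $G$ is an $X$-computable---hence $Z$-computable---compact set, and $f(x) \in G$. If $\lambda(G) = 0$, then $G$ witnesses that $f(x)$ is not $X$-Kurtz random, contradicting our assumption; so we may assume $\lambda(G) > 0$. The set of reals that are not $Z$-Kurtz random is a countable union of $Z$-computable compact null sets, hence null, so the positive-measure set $G$ must contain some $w$ that is $Z$-Kurtz random. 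Choose $x' \in F$ with $f(x') = w$.

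This yields the contradiction: $x' \in F$ and $F$ is a $Z$-computable compact null set, so $x'$ is not $Z$-Kurtz random, yet $f(x') = w$ is $Z$-Kurtz random, contradicting that $f$ reflects $Z$-Kurtz randomness. Beyond this argument, the only ingredients are that the continuous image of an (oracle-)computable compact set is again (oracle-)computable compact and that ``$Z$-computable compact'' is monotone in the oracle---both already used, in the non-relativized and relativized forms respectively, around Proposition \ref{prop:allkurtzreflection}.
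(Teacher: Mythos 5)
Your argument is correct and is essentially the paper's own proof: both take the $X$-computable compact null witness $F\ni x$, note it is also $Z$-computable, and use $Z$-Kurtz reflection to force $\lambda(f(F))=0$, so that $f(F)$ is an $X$-computable compact null set containing $f(x)$. The only cosmetic difference is that you inline the positive-measure case (which is exactly Proposition \ref{prop:krr_implies_n}) as a case split inside a contradiction argument, where the paper cites that proposition directly.
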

\begin{proof}
{Assume that $f$ reflects $Z$-Kurtz randomness.
Suppose $x$ is not $X$-Kurtz random.  Let $P$ be an $X$-computable
compact null set with $x \in P$.  Then $f(P)$ is an $X$-computable
compact set, which is null because $P$ is also $Z$-computable.
Therefore, $f(x)$ is not $X$-Kurtz random.}
\end{proof}
{Additionally, any witness to the failure of
Luzin's (N) also provides an oracle relative to which Kurtz
randomness reflection fails.}
\begin{proposition}\label{prop:krr_implies_n}
{Suppose that $f:\mathbb R \rightarrow \mathbb R$ and $A \subseteq \mathbb R$
is a $Z$-computable compact null set with $\lambda(f(A))>0$.  Then
$f$ does not reflect $Z$-Kurtz randomness.}
\end{proposition}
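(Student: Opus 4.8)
The statement is essentially a measure-counting argument, and I would prove it directly rather than by contraposition. The goal is to exhibit a single point $x$ witnessing the failure of $Z$-Kurtz randomness reflection, i.e.\ a point $x$ that is not $Z$-Kurtz random but with $f(x)$ being $Z$-Kurtz random.

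\textbf{Step 1: Every point of $A$ is already a non-random.} Since $A$ is a $Z$-computable compact null set, by the very definition of Kurtz randomness no element of $A$ is $Z$-Kurtz random. So it suffices to find some $x \in A$ whose image $f(x)$ \emph{is} $Z$-Kurtz random; that $x$ will then be the desired witness. Note we do not need $f$ to be continuous here, nor do we need $f(A)$ to be closed --- we will only use that $\lambda(f(A)) > 0$.

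\textbf{Step 2: The non-$Z$-Kurtz-randoms form a null set.} There are only countably many $Z$-computable compact sets, and a real fails to be $Z$-Kurtz random exactly when it lies in one of those that happen to be null. Hence the set of non-$Z$-Kurtz-random reals is a countable union of null sets, so it is null.

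\textbf{Step 3: Combine.} Because $\lambda(f(A)) > 0$ while the set of non-$Z$-Kurtz-random reals is null, the set $f(A)$ must contain a $Z$-Kurtz random real $y$ (indeed a positive-measure set of them). Choose any $x \in A$ with $f(x) = y$. Then $x$ is not $Z$-Kurtz random by Step 1, but $f(x) = y$ is $Z$-Kurtz random, so $f$ does not reflect $Z$-Kurtz randomness, as claimed.

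\textbf{Main obstacle.} There is very little to obstruct here; the only point requiring care is the standard fact invoked in Step 2 that the class of non-$Z$-Kurtz-randoms is null, and the observation that $f$ need not be continuous for the argument to go through, since we exploit $f(A)$ only through its Lebesgue measure and not through any topological or definability properties.
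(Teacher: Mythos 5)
Your proof is correct and takes essentially the same approach as the paper: locate a $Z$-Kurtz random point $y \in f(A)$, pull it back to some $x \in A$, and note that $x$ cannot be $Z$-Kurtz random since it lies in the $Z$-computable compact null set $A$. The only (harmless) difference is in how the existence of such a $y$ is justified --- the paper observes that $f(A)$ is itself a $Z$-computable compact set of positive measure, while you invoke the more general fact that the non-$Z$-Kurtz-randoms form a null set, which has the small added benefit of not needing continuity of $f$ or any definability of $f(A)$.
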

\begin{proof}
{Since $f(A)$ is also $Z$-computable and has positive measure, it must contain
 some $Z$-Kurtz} random $y$. There is some $x \in A$ with $f(x) = y$,
 but since $A$ is a {$Z$-computable compact} null set, it cannot contain
 any {$Z$-Kurtz} randoms. Hence, $f$ does not reflect {$Z$-Kurtz} randomness.
\end{proof}
{We can thus characterize Luzin's (N) in terms
of Kurtz randomness reflection.}
\begin{theorem}
\label{theo:luzinandkurtz}
The following are equivalent for computable {$f : \mathbb R \to \mathbb R$:}
\begin{enumerate}
\item $f$ has Luzin's (N).
\item For every $\KO$-computable compact set $A$ with $\lambda(A) = 0$ also $\lambda(f(A)) = 0$.
\item $f$ reflects $\KO$-Kurtz randomness.
\end{enumerate}
\begin{proof}
\begin{description}
\item[$1. \Rightarrow 2.$] Trivial.
\item[$2. \Rightarrow 1.$] We observe that given computable {$f : \mathbb R \to \mathbb R$} and number $n$, the set $$\{A \subseteq \mathbb [-n,n] \text{ compact } \mid \lambda(A) = 0 \wedge \lambda(f(A)) \geq 2^{-n}\}$$ is a $\Pi^0_2$-subset of {the Polish space of compact
subsets of $[-n,n]$}. By Proposition \ref{prop:closedsetsuffice}, if $f$ fails Luzin's (N), this set is non-empty for some $n$. If it is non-empty, {it must have an $\KO$-computable element by Kleene's basis theorem.}
\item[$1. \Rightarrow 3.$] {By Proposition} \ref{prop:allkurtzreflection}.
\item[$3. \Rightarrow 2.$] {By Proposition} \ref{prop:krr_implies_n}.
\end{description}
\end{proof}
\end{theorem}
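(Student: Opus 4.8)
The plan is to close the loop among the three conditions, reading off all but one of the arrows directly from the oracle-uniform propositions already established, and doing the only substantive work in the remaining arrow $(2)\Rightarrow(1)$. Explicitly, I would prove $(1)\Rightarrow(2)$ trivially, since Luzin's (N) sends \emph{every} null set to a null set and hence a fortiori every $\KO$-computable compact null set; $(1)\Rightarrow(3)$ by instantiating Proposition \ref{prop:allkurtzreflection} at the oracle $\KO$; and $(3)\Rightarrow(2)$ by the contrapositive of Proposition \ref{prop:krr_implies_n} at $\KO$: a $\KO$-computable compact null $A$ with $\lambda(f(A))>0$ would witness a failure of $\KO$-Kurtz randomness reflection. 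Combined with $(2)\Rightarrow(1)$, these give the three equivalences.

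The content is $(2)\Rightarrow(1)$, i.e.\ manufacturing a $\KO$-computable witness to the failure of Luzin's (N) from an arbitrary one. By Proposition \ref{prop:closedsetsuffice}, if $f$ fails Luzin's (N) there is already a compact null $A$ with $\lambda(f(A))>0$; since $A$ is compact it lies inside some $[-n,n]$, and $\lambda(f(A))\geq 2^{-n}$ once $n$ is taken large enough. The idea is then to realise the desired object as a member of a nonempty, effectively presented class and invoke a basis theorem. For each $n$, inside the computable Polish space $\mathcal K([-n,n])$ of compact subsets of $[-n,n]$ consider $C_n=\{A:\lambda(A)=0\ \wedge\ \lambda(f(A))\geq 2^{-n}\}$. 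Here $\lambda(A)=0$ is $\Pi^0_2$ in $A$ (for each $k$ there is a finite union $U$ of rational open intervals of total length $<2^{-k}$ with $A\subseteq U$, and $A\subseteq U$ is $\Sigma^0_1$ for a coded open $U$); and since $f$ is computable the hyperspace map $A\mapsto f(A)$ is computable, so $\lambda(f(A))\geq 2^{-n}$ is again arithmetic, at the $\Pi^0_2$ level. Thus each $C_n$ is $\Pi^0_2$ uniformly in $n$, and by the previous remark $C_n\neq\emptyset$ for some $n$. A nonempty $\Pi^0_2$ subset of a computable Polish space is in particular $\Sigma^1_1$, so Kleene's basis theorem supplies a member of $C_n$ computable from $\KO$: a $\KO$-computable compact null set with non-null image, contradicting (2). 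Hence (2) forces (1).

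I expect the main obstacle to be the complexity bookkeeping in $(2)\Rightarrow(1)$: checking that both ``$\lambda(A)=0$'' and ``$\lambda(f(A))\geq 2^{-n}$'' are genuinely $\Pi^0_2$ predicates on the hyperspace rather than one level higher — which depends on the correct presentation of $\mathcal K([-n,n])$ and on the computability of $A\mapsto f(A)$ — and on invoking the form of the basis theorem that places the witness at $\KO$ and not merely at some $\Delta^1_1$ or higher-jump level. The passage from the witness of Proposition \ref{prop:closedsetsuffice} to one living in a single $[-n,n]$ is harmless precisely because that witness is already compact, so no image measure is lost. Everything outside $(2)\Rightarrow(1)$ is a one-line appeal to an already-proved, oracle-parametrised statement.
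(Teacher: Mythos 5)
Your proposal is correct and follows essentially the same route as the paper: the three easy arrows are read off from Propositions \ref{prop:allkurtzreflection} and \ref{prop:krr_implies_n}, and $(2)\Rightarrow(1)$ is obtained by combining Proposition \ref{prop:closedsetsuffice} with the observation that the witnesses form a nonempty $\Pi^0_2$ subclass of the hyperspace of compact subsets of $[-n,n]$, to which Kleene's basis theorem applies. Your additional bookkeeping on why the two defining predicates are $\Pi^0_2$ is a correct elaboration of what the paper leaves implicit.
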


{We also see that $\Delta^1_1$-Kurtz randomness reflection does not suffice
for a characterization.}

\begin{lemma}
\label{lemma:kurtzreflectionsigma11}
Reflecting $\Delta^1_1$-Kurtz randomness is a $\Sigma^1_1$-property of continuous {$f : \mathbb R \to \mathbb R$}.
\begin{proof} {By Proposition}  \ref{prop:krr_implies_n},
reflecting $\Delta^1_1$-Kurtz randomness is equivalent to the statement that for any $\Delta^1_1$ {compact} set $A$ with $\lambda(A) = 0$ {we have} $\lambda(f(A)) = 0$. By Kleene's HYP quantification theorem \cite{kleene4,kleene5}, a universal quantification over $\Delta^1_1$ can be replaced by an existential quantification over Baire space. That $\lambda(A) = 0$ implies $\lambda(f(A)) = 0$ is a $\Delta^0_3$-statement for given $f$ and $A$.
\end{proof}
\end{lemma}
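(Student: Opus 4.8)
The plan is to rewrite ``$f$ reflects $\Delta^1_1$-Kurtz randomness'' as a single universal quantifier over the hyperarithmetic compact sets applied to an arithmetic matrix, and then to collapse that quantifier to an existential real quantifier using the effectivity of the hyperarithmetic hierarchy.

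The first step is to establish the equivalence
\[
f\text{ reflects }\Delta^1_1\text{-Kurtz randomness}\iff \forall A\ \bigl[\bigl(A\text{ is }\Delta^1_1,\ \text{compact},\ \lambda(A)=0\bigr)\Rightarrow\lambda(f(A))=0\bigr].
\]
For the right-to-left direction: if $x$ is not $\Delta^1_1$-Kurtz random then $x\in A$ for some $\Delta^1_1$ compact null $A$; since $f(A)$ is obtained from $f$ and $A$ by a uniformly computable operation (the continuous image of an effectively compact set is effectively compact), $f(A)$ is again a $\Delta^1_1$ compact set, null by hypothesis, and contains $f(x)$, so $f(x)$ is not $\Delta^1_1$-Kurtz random. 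For the left-to-right direction: given a $\Delta^1_1$ compact null $A$ with $\lambda(f(A))>0$, Proposition~\ref{prop:krr_implies_n} applied with an oracle $Z\in\Delta^1_1$ computing $A$ already shows $f$ does not reflect $Z$-Kurtz randomness; to upgrade this to a failure of $\Delta^1_1$-Kurtz randomness reflection I would use that the non-$\Delta^1_1$-Kurtz-random reals form a null set (there are only countably many $\Delta^1_1$ compact null sets), so the positive-measure $\Delta^1_1$ set $f(A)$ contains a $\Delta^1_1$-Kurtz random $y$, any preimage of which inside the null set $A$ is a non-$\Delta^1_1$-Kurtz-random point.

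The second step is to read off the complexity of the right-hand side. For fixed $f$ and a code $A$ for a compact subset of an interval $[-n,n]$, the Lebesgue measure of a compact set is upper semicontinuous, so ``$\lambda(A)=0$'' is $\Pi^0_2$; and since $f(A)$ is computed continuously from $f\oplus A$, ``$\lambda(f(A))=0$'' is likewise $\Pi^0_2$ in $f\oplus A$. Hence the bracketed implication is an arithmetic (in fact $\Delta^0_3$) predicate of $f\oplus A$, uniformly in $n$. The outer quantifier ranges only over hyperarithmetic sets $A$, so by Kleene's HYP quantification theorem~\cite{kleene4,kleene5} it may be replaced by a single existential quantifier over Baire space in front of an arithmetic matrix, which places the property in $\Sigma^1_1$ (relative to $f$, hence $\boldsymbol\Sigma^1_1$ as a property of the continuous function $f$).

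I expect the first step to be the delicate one, chiefly because of the oracle bookkeeping concealed in ``$\Delta^1_1$'': the image operation $A\mapsto f(A)$ must be invoked with enough uniformity that hyperarithmeticity (relative to $f$) is preserved, and Proposition~\ref{prop:krr_implies_n} only gives a failure of $Z$-Kurtz randomness reflection for the single oracle $Z$ computing $A$, so the nullity of the set of non-$\Delta^1_1$-Kurtz-random reals is precisely what is needed to descend to a failure of $\Delta^1_1$-Kurtz randomness reflection itself. Once this equivalence is in place the HYP quantification theorem does the rest. The resulting $\Sigma^1_1$ bound already suffices for the intended application, separating $\Delta^1_1$-Kurtz randomness reflection from Luzin's (N): the latter is $\boldsymbol\Pi^1_1$-complete among continuous functions by~\cite{holicky}, hence not $\boldsymbol\Sigma^1_1$.
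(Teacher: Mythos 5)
Your proposal is correct and follows essentially the same route as the paper: reduce reflection of $\Delta^1_1$-Kurtz randomness to the statement that every $\Delta^1_1$ compact null set has null image, observe that the matrix is arithmetic (the paper says $\Delta^0_3$, matching your $\Pi^0_2\Rightarrow\Pi^0_2$ count), and collapse the universal quantifier over $\Delta^1_1$ codes via Kleene's HYP quantification theorem to land in $\Sigma^1_1$. Your first step merely spells out in detail the equivalence that the paper compresses into a citation of Proposition~\ref{prop:krr_implies_n}, including the needed observation that the non-$\Delta^1_1$-Kurtz-randoms form a null set.
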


\begin{corollary}\label{cor:Delta11kurtzweaker}
Reflecting $\Delta^1_1$-Kurtz randomness is strictly weaker than Luzin's (N).
\begin{proof}
{By Proposition} \ref{prop:allkurtzreflection}, Luzin's (N) implies $\Delta^1_1$-Kurtz randomness reflection. By Lemma \ref{lemma:kurtzreflectionsigma11} reflecting $\Delta^1_1$-Kurtz randomness is a $\Sigma^1_1$-property. But it was shown in \cite{holicky} that Luzin's (N) is $\Pi^1_1$-complete for continuous functions. Thus the two notions cannot coincide.
\end{proof}
\end{corollary}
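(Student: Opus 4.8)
The plan is a pure descriptive-complexity argument, combining the two facts just proved with the classical result of \cite{holicky}. First I would dispose of the easy direction: by Proposition~\ref{prop:allkurtzreflection} (and the remark immediately following it), every continuous $f : \mathbb R \to \mathbb R$ satisfying Luzin's~(N) reflects $\Delta^1_1$-Kurtz randomness, so the implication ``Luzin's~(N) $\Rightarrow$ $\Delta^1_1$-Kurtz randomness reflection'' holds. The real content of the corollary is therefore that the converse fails, i.e.\ that these two classes of continuous functions are not equal; since one is contained in the other, that is exactly strictness.

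To separate the two classes I would not construct an explicit witness but instead count complexity. By Lemma~\ref{lemma:kurtzreflectionsigma11}, the class of continuous $f : \mathbb R \to \mathbb R$ reflecting $\Delta^1_1$-Kurtz randomness is $\Sigma^1_1$ as a subset of the space of continuous functions. By \cite{holicky}, the class of continuous $f$ with Luzin's~(N) is $\Pi^1_1$-complete in that same space; in particular it is not $\Sigma^1_1$, since a set that is simultaneously $\Sigma^1_1$ and $\Pi^1_1$ is Borel by Suslin's theorem, whereas a $\Pi^1_1$-complete set is not Borel. Hence the two classes cannot coincide, so the inclusion is proper, and any continuous function witnessing the properness reflects $\Delta^1_1$-Kurtz randomness while failing Luzin's~(N).

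The only point requiring care --- it is hardly an obstacle --- is to confirm that Lemma~\ref{lemma:kurtzreflectionsigma11} and the theorem of \cite{holicky} are being read in the \emph{same} ambient Polish space of continuous functions, with compatible codings, so that the $\Sigma^1_1$ upper bound and the $\Pi^1_1$-hardness are genuinely comparable. With that checked, the corollary follows from a one-line complexity count; note that it yields only the existence of a separating function, not an explicit one.
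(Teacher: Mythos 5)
Your proposal is correct and follows essentially the same route as the paper: Luzin's (N) implies $\Delta^1_1$-Kurtz randomness reflection by Proposition~\ref{prop:allkurtzreflection}, the latter is $\Sigma^1_1$ by Lemma~\ref{lemma:kurtzreflectionsigma11}, and the $\Pi^1_1$-completeness of Luzin's (N) from \cite{holicky} rules out coincidence. Your added remark that a $\Pi^1_1$-complete set cannot be $\Sigma^1_1$ (via Suslin's theorem) just makes explicit a step the paper leaves implicit.
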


\subsection{Open questions}
\label{subsec:questions}
Theorems \ref{theo:main} and \ref{theo:luzinandkurtz} tell us that $\Delta^1_1(\KO)$-randomness reflection {and $\KO$-Kurtz randomness reflection each characterize} Luzin's (N) for computable functions. This does not rule out that other kinds of randomness reflection could also characterize Luzin's (N). {In the next section we shall see that none of MLR-reflection, W2R-reflection, or MLR$(\emptyset')$-reflection imply Luzin's (N) for arbitrary computable functions} (Corollary \ref{corr:separating}).  {Because reflection asks for the
same level of randomness on both sides, there are no completely trivial implications
between the $\mathbf \Pi^0_2$-type
randomness reflection notions.  Indeed, results in} \cite{merkle3} {suggest
that the implication structure between $\mathbf \Pi^0_2$-type
randomness reflection notions may
have little relation to the implication structure between notions of randomness.
However,} the most interesting open question seems to be:

\begin{question}
\label{question:delta11}
Can a computable function reflect $\Delta^1_1$-randomness but fail Luzin's (N)?
\end{question}

By Theorem \ref{theo:main} any such example would need to have a positive measure of fibers being uncountable, which is incompatible with most niceness conditions. {
We also do not know the answer to the above question if
$\Delta^1_1$-randomness is replaced with Martin-L\"of
randomness relative to $\emptyset^{(\alpha)}$ for any $\alpha \geq 2$.}

{Related questions concern basis theorems for failures of Luzin's (N).  We
have already seen in Theorem} \ref{theo:luzinandkurtz} {that any computable
$f$ which fails Luzin's (N) must see that failure witnessed by a $\mathcal O$-computable
compact set.  The proof shows that such a set can also be chosen hyperarithemetically
low, by applying Gandy basis theorem in place of the Kleene.
On the other hand, Corollary }
\ref{cor:Delta11kurtzweaker} {shows that a function which fails Luzin's (N)
need not have a hyperarithmetic compact witness.  Indeed, one can obtain specific examples
of this separation by feeding pseudo-well-orders into the $\Pi^1_1$-completeness
construction of }\cite{holicky}.  {Thus the results for compact
witnesses are rather tight overall.}

{The situation for the minimum complexity of
$\mathbf \Pi^0_2$ witnesses is less well understood.  The proof of
Corollary }\ref{corr:separating} {shows that a computable
function may fail
Luzin's (N) while still mapping all rapidly null $\Pi^0_2(\emptyset')$
sets
to null sets.  That is, the set $\mathrm{MLR}(\emptyset')^C$ is mapped to a null set.}

\begin{question}
\label{question:pi02basis}
{Can a computable function map $W3R^C$
to a null set but fail Luzin's (N)?  Equivalently, can
a computable function map all null $\Pi^0_2(\emptyset')$
sets to null sets while failing Luzin's (N)?}
\end{question}

{We note that the functions produced by the $\Pi^1_1$-completeness
construction of } \cite{holicky} {are of no help because
the failure of Lusin's (N), when it occurs,
is witnessed by an effectively null $\Pi^0_2$ set.}

\section{Separating Luzin's (N) from MLR-reflection}
\label{sec:separating}
We present a construction of a computable function that violates Luzin's (N),
{and yet is piecewise-linear in a neighborhood of every point that is not $\mathrm{MLR}(\emptyset')$.} {Here, we say that $f$ is piecewise-linear in a neighborhood of $x$, if there are rationals $a < x < b$ such that $f|_{[a,x]}$ and $f|_{[x,b]}$ are linear functions. Computable piecewise-linear functions reflect essentially all kinds of randomness.}

\begin{theorem}
\label{theo:separating}
For each {$\Pi^0_1(\emptyset')$}-set $A \subseteq \uint$
there is a computable function $f : \uint \to \uint$ such that:
\begin{enumerate}
\item For every $x \in \uint \setminus A$, $f$ is {piecewise-linear} on a neighbourhood of $x$.
\item {For every $\varepsilon>0$}, there is a null {$\Pi^0_1(\emptyset'')$} set $B \subseteq A$ such that $\lambda(f[B]) \geq \lambda(A) - \varepsilon$.
\end{enumerate}
\end{theorem}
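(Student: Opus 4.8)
The plan is to build $f$ so that it stretches a thin copy of $A$ (sitting inside a small null $\mathbf\Pi^0_2$ set) across a set of almost full measure $\lambda(A)$, while keeping $f$ genuinely piecewise-linear away from $A$. The natural framework is a ``tent-map'' / Cantor-like construction driven by the $\Pi^0_1(\emptyset')$ approximation of $A$. First I would fix a $\emptyset'$-computable approximation $A = \bigcap_s A_s$ where each $A_s$ is a clopen (finite union of dyadic intervals) set with $A_{s+1} \subseteq A_s$; since $A$ is $\Pi^0_1(\emptyset')$ this gives us, uniformly, the ``stage $s$'' picture of which dyadic intervals still look like they might meet $A$. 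The idea is then to define $f$ as a uniform limit of piecewise-linear functions $f_s$, where at stage $s$ we leave $f_s$ alone (already finalized and linear) on the part of $\uint$ that has dropped out of $A_s$, and on each dyadic interval $I$ still in $A_s$ we refine the definition of $f_s$ on $I$ into finitely many affine pieces chosen to spread $I$'s contribution out.

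The key quantitative device: on each surviving interval $I$ at stage $s$, if $I$ currently has measure roughly $2^{-k}$ but we want its image to occupy measure roughly $c\cdot\lambda(I)/\lambda(A)$ for an appropriate normalizing constant $c$, we replace $f_s\!\restriction\! I$ by a piecewise-linear ``zig-zag'' whose slope is large (of order $1/\lambda(A)$ times the local density) so that the image of $I\cap A$ fills out the intended target interval. We simultaneously shrink the part of $I$ we keep ``active'' so that the total active measure tends to $0$ — this is what makes $B := \bigcap_s(\text{active part})$ a null $\mathbf\Pi^0_2$ set. The bookkeeping has to ensure three things: (i) the sequence $f_s$ is uniformly Cauchy, so $f = \lim_s f_s$ is computable and (being a uniform limit of piecewise-linear maps that are \emph{eventually constant} near any $x\notin A$) is piecewise-linear on a neighbourhood of each $x\in\uint\setminus A$; (ii) the target intervals assigned to the surviving pieces of $A$ at stage $s$ are nested and their union over all of $A$'s pieces has measure converging up to $\lambda(A)$, so $\lambda(f[B])\geq\lambda(A)-\varepsilon$ once we truncate the construction appropriately for the given $\varepsilon$ (or, more cleanly, for each $\varepsilon$ run a separate construction where we only need to capture a $(1-\varepsilon)$-fraction, letting us stop refining once an interval is small enough); (iii) $B\subseteq A$ and $B$ is $\Pi^0_1(\emptyset'')$, which follows because ``$x$ stays active forever'' is a $\emptyset''$-computable (indeed $\Pi^0_1(\emptyset'')$, since deciding activity at stage $s$ needs the $\emptyset'$-approximation plus checking a $\emptyset'$-c.e. condition has stabilized) co-c.e. condition, and activity is only ever assigned inside $A_s$ so $B\subseteq\bigcap_s A_s = A$.

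For (i) I would be careful about how much $f_s$ is allowed to move: when we subdivide a surviving interval $I$ at stage $s$, the new piecewise-linear profile on $I$ should agree with the old one at the endpoints of $I$ and differ from it by at most, say, $2^{-s}$ in sup norm on $I$; since each point $x\notin A$ lies in some interval that drops out of $A_{s_0}$ at a finite stage $s_0$, $f$ agrees with $f_{s_0}$ on a neighbourhood of $x$ and is therefore piecewise-linear there, while the $2^{-s}$ bound gives uniform convergence everywhere and hence computability and continuity of $f$. The measure accounting in (ii) is the delicate part: we must choose the target intervals for the finitely many active subintervals of $A_s$ to be \emph{pairwise disjoint} (so their measures add) and \emph{nested across stages} (so that $f[B]$, which equals the intersection over $s$ of the unions of stage-$s$ targets, still has large measure), and this requires that when an interval splits into children we partition its target interval among the children in proportion to the (approximate) $A$-mass of the children, with a little room reserved for the children that will later die. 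This is exactly the same splitting discipline used to build a measure-preserving map from a null Cantor set onto a positive-measure set, transplanted into the $\emptyset'$-approximation setting.

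\textbf{Main obstacle.} I expect the genuine difficulty to be reconciling the \emph{non-monotone} nature of the $\Pi^0_1(\emptyset')$ approximation with the requirement that the assigned target intervals be nested and add up correctly. When a dyadic interval $I$ gets \emph{removed} from $A_s$ after we have already committed image-space to it and to its descendants, that committed measure is ``lost'' — it does not reappear in $f[B]$. So the construction must be pessimistic: at each stage reserve target measure only in proportion to a \emph{lower bound} on how much of $I$ can possibly survive, and because the $\emptyset'$-approximation can retract arbitrarily, one has to interleave the refinement with a careful accounting that charges each potential retraction against the $\varepsilon$ budget. The reason this is ultimately affordable — and the reason the set $B$ can be taken null — is that we are allowed to lose an $\varepsilon$-fraction, and we only ever need to keep alive a shrinking sliver of each surviving interval, so the ``wasted'' image measure from retractions at level $n$ of the tree can be bounded by a geometrically small amount if we delay committing target space until the $\emptyset'$-approximation has been stable for enough stages. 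Making this delay-and-charge scheme precise, and verifying that $B$ ends up $\Pi^0_1(\emptyset'')$ rather than merely $\Pi^0_2(\emptyset')$, is where the real work lies.
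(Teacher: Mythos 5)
Your high-level picture (a limit of piecewise-linear zig-zags, finalized near every point that leaves the approximation to $A$, with a shrinking ``active'' set $B$) matches the paper's, but there is a gap your scheme does not survive: as described, your $f$ is not computable. You propose to choose the zig-zag slopes and to partition each target interval ``in proportion to the (approximate) $A$-mass of the children,'' with slopes of order $1/\lambda(A)$ times a local density. For a $\Pi^0_1(\emptyset')$ set $A$, the reals $\lambda(A)$ and $\lambda(I\cap A)$ are only $\emptyset''$-computable; using their stage-wise approximations forces the slopes and target allocations to be revised as the approximation fluctuates, and you have not explained how to do this while keeping $f$ computable, the targets nested, and $f$ finalized near points outside $A$ --- you yourself defer this (``the real work'') to an unspecified delay-and-charge scheme. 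The paper's proof avoids the issue by decoupling the two halves of the statement: $f$ is built by a completely uniform \emph{tripling} rule (every surviving interval at depth $n$ is replaced by a three-piece zig-zag of slope $\pm 3^n$, independent of any measure computation), so $f$ is computable outright; all measure-sensitive choices are postponed to the definition of $B$, where $\emptyset''$ is used to select, at each level, the third of each parent interval whose intersection with $A$ has (nearly) maximal measure. Keeping one third per level gives $\lambda\bigl(\bigcup\mathfrak{I}_n^\varepsilon\bigr)\leq 3^{-n}$, so $B$ is null, while maximality of the chosen third gives $3^n\lambda\bigl(A\cap\bigcup\mathfrak{I}_n^\varepsilon\bigr)\geq\lambda(A)-\varepsilon$, and the constant slope $\pm 3^n$ together with near-injectivity on peer intervals converts this directly into $\lambda(f[B])\geq\lambda(A)-\varepsilon$. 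This decoupling is the key idea your proposal is missing.

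Two smaller points. First, the non-monotonicity of the $\emptyset'$-approximation, which you single out as the main obstacle, can be made to disappear: since a $\Pi^0_1(\emptyset')$ set is $\Pi^0_2$, write $A=\bigcap_n U_n$ with $U_n$ c.e.\ open, and shrink each $U_n$ (computably, guessing a separation parameter from below using the fact that $A$ is closed and $\emptyset'$-computable) to obtain nested closed-interval covers of $A$ with positive distance between consecutive levels; intervals are then only ever added, never retracted, and no charging scheme is needed. Second, mind the quantifiers: the theorem asks for a single $f$ that works for every $\varepsilon$, so your parenthetical suggestion to ``run a separate construction for each $\varepsilon$'' does not prove the statement --- only $B$, not $f$, may depend on $\varepsilon$.
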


\begin{corollary}
\label{corr:separating}
There is a computable function $f : \uint \to \uint$ that reflects ML-randomness, weak-2-randomness and ML($\emptyset'$)-randomness, yet does not have Luzin's (N), nor reflects {weak-3-randomness}.
\begin{proof}
Let $A$ be the complement of the first component of a universal ML($\emptyset'$)-test. Then $\lambda(A) > \frac{1}{2}$. We invoke Theorem \ref{theo:separating} on $A$ and $\epsilon = \frac{1}{4}$. The resulting function is the desired one: If $x \in \uint$ is not ML($\emptyset'$)-random, then $x \notin A$, $f$ is {piecewise-linear} {on a neighborhood of} $x$, and thus $f(x)$ is not ML($\emptyset'$)-random. As such, we conclude that whenever $f(x)$ is ML($\emptyset'$)-random, then so is $x$ (same for the other notions).

Since we can choose the witness $B$ as being {$\Pi^0_1(\emptyset'')$}, it is also {$\Pi^0_2(\emptyset')$}, and thus contains only elements which are not {weak-3-random}. Since $f[B]$ has positive measure, it contains a {weak-3-random} -- hence $f$ does not reflect {weak-3-randomness}.
\end{proof}
\end{corollary}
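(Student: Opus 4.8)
The plan is to obtain everything directly from Theorem \ref{theo:separating} by choosing $A$ cleverly. I would take $(U_n)_{n}$ to be a universal Martin-L\"of test relative to $\emptyset'$, arranged to be decreasing (replacing $U_n$ by $\bigcap_{k\le n}U_k$ preserves the measure bounds), and set $A = U_1^C$. Then $A$ is $\Pi^0_1(\emptyset')$, so it is a legal input to Theorem \ref{theo:separating}, and one can arrange $\lambda(A) > \tfrac12$. The decisive structural feature of this choice is that $\mathrm{MLR}(\emptyset')^C = \bigcap_n U_n \subseteq U_1$, so $A \subseteq \mathrm{MLR}(\emptyset')$; since $\mathrm{MLR}(\emptyset') \subseteq \mathrm{W2R} \subseteq \mathrm{MLR}$, the set $A$ is simultaneously contained in the set of $R$-randoms for each of the three notions $R \in \{\mathrm{MLR}, \text{weak-2-random}, \mathrm{MLR}(\emptyset')\}$. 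I then apply Theorem \ref{theo:separating} to this $A$ with $\varepsilon = \tfrac14$, obtaining a computable $f : \uint \to \uint$ together with the neighbourhood-wise piecewise-linearity (1) and the witness (2).

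Next I would verify reflection of all three notions at once. Fix $R \in \{\mathrm{MLR}, \text{weak-2-random}, \mathrm{MLR}(\emptyset')\}$. Any potential failure of reflection would occur at an $x$ with $f(x)$ being $R$-random but $x$ not $R$-random. For such an $x$ we have $x \notin A$, because $A$ is contained in the set of $R$-randoms (as just noted), so $x \notin R$ forces $x \notin A$. By clause (1), $f$ then agrees on a rational neighbourhood of $x$ with a computable piecewise-linear function, and computable piecewise-linear functions reflect $R$ (the relevant local maps are computable affine maps, which are computable measure-isomorphisms preserving and reflecting each of these notions, the degenerate constant case giving a non-random value outright). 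Hence $f(x)$ being $R$-random would force $x$ to be $R$-random, a contradiction. Thus no failure occurs and $f$ reflects each of the three notions.

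Finally I would read off the negative properties from clause (2). With $\varepsilon = \tfrac14$ it yields a null $\Pi^0_1(\emptyset'')$ set $B \subseteq A$ with $\lambda(f[B]) \geq \lambda(A) - \tfrac14 > \tfrac14 > 0$. Since $B$ is null and $f[B]$ has positive measure, $f$ fails Luzin's (N). For weak-3-randomness I would note that $\Pi^0_1(\emptyset'')$ coincides with $\Pi^0_2(\emptyset') = \Pi^0_3$, so $B$ is a null $\Pi^0_3$ set and therefore contains no weak-3-random reals; on the other hand $\lambda(f[B]) > 0$, and the weak-3-randoms have full measure, so $f[B]$ contains some weak-3-random $y = f(x)$ with $x \in B$. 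This $x$ is not weak-3-random while $f(x)$ is, exhibiting a failure of weak-3-randomness reflection.

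The substance of the argument is carried entirely by Theorem \ref{theo:separating}, which I am assuming; the one design decision on which everything hinges is forcing $A$ into the \emph{strongest} of the three randomness classes, $\mathrm{MLR}(\emptyset')$, so that a single containment $A \subseteq \mathrm{MLR}(\emptyset')$ simultaneously handles reflection for all three weaker-or-equal notions. I expect the only points needing care, rather than a genuine obstacle, to be the reflection property of computable piecewise-linear maps (invoked via the remark preceding Theorem \ref{theo:separating}) and the complexity bookkeeping $\Pi^0_1(\emptyset'') = \Pi^0_2(\emptyset') = \Pi^0_3$ together with the standard fact that a null $\Pi^0_3$ set contains no weak-3-random; neither of these presents a real difficulty.
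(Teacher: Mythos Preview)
Your proposal is correct and follows essentially the same approach as the paper: you choose $A$ as the complement of the first component of a universal $\mathrm{ML}(\emptyset')$-test, apply Theorem~\ref{theo:separating} with $\varepsilon=\tfrac14$, and derive the reflection properties from local piecewise-linearity outside $A$ together with the containment $A\subseteq\mathrm{MLR}(\emptyset')$, while the failure of Luzin's~(N) and of weak-3-randomness reflection come from the null $\Pi^0_1(\emptyset'')$ witness $B$. Your write-up is slightly more explicit about the complexity bookkeeping and the role of piecewise-linear reflection, but the argument is the same.
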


{We remark that this is the strongest result possible for the strategy
we are using.  We are making sure that $f$ reflects
$\mathrm{MLR}(\emptyset')$-randomness by making $f$ piecewise-linear in
a neighborhood of every non-$\mathrm{MLR}(\emptyset')$ point. However, the following proposition shows
that the set of points where $f$ can
be this simple has a descriptive complexity of $\Sigma^0_1(\emptyset')$.
But the weak-3-non-randoms are not contained in any $\Sigma^0_1(\emptyset')$ set except $\uint$.}

\begin{proposition}
Let $f : \mathbb{R} \to \mathbb{R}$ be computable. The set of points where $f$ is locally piecewise-linear is $\Sigma^0_1(\emptyset')$.
\begin{proof}
We consider the property $\mathrm{2L}$ of a function $f$ and an interval $[a,b]$ that there is some $x \in [a,b]$ such that both $f|_{[a,x]}$ and $f|_{[x,b]}$ are linear. We first claim that this is a $\Pi^0_1$-property. To this, we observe that $\mathrm{2L}$ is equivalent to: {\small \[\forall n \in \mathbb{N} \exists i \leq n \quad \left (\forall j \in \{0,\ldots,n-2\} \setminus \{i-1,i,i+1\} \ f(a + \frac{j}{n}) - f(a + \frac{j+1}{n}) = f(a + \frac{j+1}{n}) - f(a + \frac{j+2}{n}) \right )\] }

Next, we observe that $f$ is locally piecewise-linear in $x$ iff there is some rational interval $(a,b) \ni x$ such that $f$ has property $\mathrm{2L}$ on $[a,b]$. Using $\emptyset'$, we can enumerate all these intervals.
\end{proof}
\end{proposition}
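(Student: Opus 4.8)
The plan is to verify the two characterizations claimed in the statement of the Proposition. First I would establish that the property $\mathrm{2L}$ — that there exists $x \in [a,b]$ with $f|_{[a,x]}$ and $f|_{[x,b]}$ both linear — is a $\Pi^0_1$ property of the pair $(f,[a,b])$ (with $a,b$ rational and $f$ computable). The natural way to see this is to discretize: $f$ is linear on a subinterval if and only if all its finite-resolution sampled slopes agree there, and being piecewise-linear with a single breakpoint means that at every resolution $n$ there is an index $i$ such that the slopes over the sampled grid agree everywhere except possibly near position $i$. Writing this out gives the displayed $\forall n \exists i \forall j$ formula, which is $\Pi^0_1$ because the innermost arithmetic equalities on values of a computable $f$ at rational points form a $\Delta^0_1$ (indeed computable) condition, the $\exists i \leq n$ is a bounded quantifier, and the outer $\forall n$ is the one unbounded universal quantifier.

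Next I would show the equivalence "$f$ is locally piecewise-linear at $x$" $\iff$ "$\exists$ rational interval $(a,b) \ni x$ such that $f$ has property $\mathrm{2L}$ on $[a,b]$". The forward direction is immediate from the definition of locally piecewise-linear (shrink the rational endpoints to land strictly inside the interval of linearity-with-one-breakpoint, keeping $x$ inside). The reverse direction is equally direct: if $f$ has a single breakpoint structure on $[a,b]$ and $x \in (a,b)$, then $x$ has a subinterval of $(a,b)$ on which $f$ is piecewise-linear with a breakpoint at $x$ (or is outright linear there). Since the rational intervals $(a,b)$ are a computable list and "$f$ has $\mathrm{2L}$ on $[a,b]$" is $\Pi^0_1$, hence decidable by $\emptyset'$, we can use $\emptyset'$ to enumerate exactly those rational intervals on which $\mathrm{2L}$ holds; the set of locally piecewise-linear points is the union of these intervals, which is therefore $\Sigma^0_1(\emptyset')$.

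The main obstacle — really the only delicate point — is making the $\Pi^0_1$ claim for $\mathrm{2L}$ rigorous: one must confirm that the discretized condition in the displayed formula genuinely captures "there is a point $x$ with linear pieces on either side" and not some spurious near-linear behavior, and that the allowance to skip the three indices $\{i-1,i,i+1\}$ correctly accounts for the breakpoint possibly falling strictly between grid points. This is handled by a compactness/continuity argument: if the discretized condition holds at every resolution $n$, then by König's lemma one can extract a convergent sequence of candidate breakpoint locations $i/n \to x$, and continuity of $f$ forces $f$ to be affine on $[a,x]$ and on $[x,b]$ (since arbitrarily fine grids away from $x$ have constant slope, and the two one-sided slopes are the limits of these). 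Conversely if such an $x$ exists, picking $i$ to be the grid index nearest $x$ at each resolution witnesses the formula. Everything else is routine.
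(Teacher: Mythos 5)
Your proposal follows essentially the same route as the paper: the same property $\mathrm{2L}$, the same discretized $\forall n\,\exists i\leq n\,\forall j$ formula, the same reduction of local piecewise-linearity to the existence of a rational interval satisfying $\mathrm{2L}$, and the same use of $\emptyset'$ to enumerate those intervals (your added K\"onig's-lemma justification of the discretization is a detail the paper leaves implicit). The only quibble is that equality of values of a computable $f$ at rational points is $\Pi^0_1$ rather than $\Delta^0_1$ as you state, but this does not affect the count since the whole formula remains $\Pi^0_1$.
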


Generalizing this idea slightly, recall that a function is bi-Lipschitz, if both the function and its inverse are Lipschitz functions, i.e.~if there exists some constant $L$ such that $d(f(x),f(y)) \leq Ld(x,y) \leq L^2d(f(x),f(y))$ for all $x, y$ in the domain. Since computable locally bi-Lipschitz functions preserve and reflect all kinds of randomness,
Another way for $f$ to ensure a given notion of randomness
reflection is by being locally bi-Lipschitz on the
non-random points for that notion. However, we still get a $\Sigma^0_1(\emptyset')$-set of suitable points.

\begin{proposition}
Let $f : \mathbb{R} \to \mathbb{R}$ be computable. The set of points where $f$ is locally bi-Lipschitz is $\Sigma^0_1(\emptyset')$.
\begin{proof}
The following is a co-c.e.~property in $a,b \in \mathbb{Q}$ and $L \in \mathbb{N}$ and $f \in \mathcal{C}(\mathbb{R},\mathbb{R})$: $$\forall x,y \in [a,b] \quad d(x,y) \leq Ld(f(x),f(y)) \leq L^2d(x,y)$$
We obtain the set of points where $f$ is locally bi-Lipschitz by taking the union of all $(a,b)$ having the property above for some $L \in \mathbb{N}$ -- access to $\emptyset'$ suffices to get such an enumeration.
\end{proof}
\end{proposition}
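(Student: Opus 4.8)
The plan is to show that the set of points where a computable $f:\mathbb{R}\to\mathbb{R}$ is locally bi-Lipschitz is $\Sigma^0_1(\emptyset')$, by exhibiting it as an effective union, using $\emptyset'$ as an oracle, of basic open intervals on which $f$ is bi-Lipschitz with some explicit constant. First I would isolate the quantifier-free arithmetic core: for fixed rationals $a<b$, a natural number $L$, and a computable $f$, consider the predicate
\[
\Phi(a,b,L,f) \equiv \forall x,y \in [a,b] \quad d(x,y) \leq L\,d(f(x),f(y)) \leq L^2\,d(x,y).
\]
The right-hand inequality $d(f(x),f(y)) \leq L\,d(x,y)$ is $L$-Lipschitz continuity of $f$ on $[a,b]$, and the left-hand inequality is $L$-Lipschitz continuity of $f^{-1}$ along the image; together they say $f\upharpoonright[a,b]$ is bi-Lipschitz with constant $L$. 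The key observation is that $\Phi$ is a $\Pi^0_1$ property: a universal quantifier over the compact interval $[a,b]$ of a closed (non-strict-inequality) condition on the continuous data $f(x), f(y)$ can be replaced by a universal quantifier over rational $x,y \in [a,b]$ of the same condition, since $f$ is continuous and the inequalities are non-strict, and checking each instance against the rationally-indexed approximations of $f$ is co-c.e. So $\Phi$ is co-c.e.\ uniformly in $(a,b,L)$.

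Next I would verify the two directions of the characterization. If $f$ is locally bi-Lipschitz at $x$, then by definition there is a neighbourhood on which $f$ is bi-Lipschitz with some constant; shrinking to a rational subinterval $(a,b) \ni x$ and rounding the constant up to an integer $L$ gives $\Phi(a,b,L,f)$. Conversely, if $\Phi(a,b,L,f)$ holds and $x \in (a,b)$, then $f$ is bi-Lipschitz on a neighbourhood of $x$, so $x$ is in the set. Hence the set of local bi-Lipschitz points equals $\bigcup \{(a,b) : a,b \in \mathbb{Q},\ \exists L \in \mathbb{N}\ \Phi(a,b,L,f)\}$. Since $\Phi$ is $\Pi^0_1$, the condition ``$\exists L\ \Phi(a,b,L,f)$'' is $\Sigma^0_2$, hence $\emptyset'$ can enumerate exactly those pairs $(a,b)$ for which it holds; taking the union of the corresponding intervals produces a $\emptyset'$-c.e.\ open set, i.e.\ a $\Sigma^0_1(\emptyset')$ set. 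This is exactly the structure already used in the preceding proposition for piecewise-linearity, so the argument is entirely parallel.

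The main (mild) obstacle is the claim that $\Phi$ is genuinely $\Pi^0_1$ rather than merely $\Pi^0_2$: one must be careful that replacing the quantifier over all reals in $[a,b]$ by a quantifier over rationals is legitimate. This works precisely because all the inequalities are non-strict and $f$ is continuous, so the set of $(x,y)$ violating an inequality is open, and an open subset of $[a,b]^2$ is nonempty iff it contains a rational point; thus the $\forall$ over reals and the $\forall$ over rationals are equivalent. One must also confirm that, for rational $x,y$, the statement $d(x,y) \leq L\,d(f(x),f(y)) \leq L^2 d(x,y)$ is co-c.e.\ in the name of $f$: the failure of either inequality is witnessed by a sufficiently good finite approximation to $f(x)$ and $f(y)$, so the negation is c.e. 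Beyond this point the argument requires only bookkeeping, and no deeper ingredients are needed.
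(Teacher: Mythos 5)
Your proof is correct and follows essentially the same route as the paper's: both express local bi-Lipschitzness via the co-c.e.\ predicate $\forall x,y \in [a,b]\ d(x,y) \leq L\,d(f(x),f(y)) \leq L^2 d(x,y)$ and then take the $\emptyset'$-effective union of the qualifying rational intervals. Your additional justification of why the predicate is genuinely $\Pi^0_1$ (reduction to rational $x,y$ via openness of the violation set) is a detail the paper leaves implicit but does not change the argument.
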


\subsection{High-level proof sketch}
{Before diving into the details of the proof of Theorem}
\ref{theo:separating} {we give a high-level sketch of
what is going on.  Consider first the case where $A$ is
$\Pi^0_1$.  Then $A = \cap_n A_n$, where each $A_n$
is a finite union of closed intervals and $A_{n+1}\subseteq A_n$.
We iteratively define
a sequence of piecewise linear
functions $f_0,f_1,\dots$, where
$f_0:\uint \rightarrow \uint$ is
the identity, and $f_{n}$ is obtained from $f_{n-1}$ by performing
a ``tripling'' operation on those line segments of $f_{n-1}$ which
are contained in $A_{n}$.  In order to ``triple'' a line segment,
we replace it by a zig-zag of three line segments each of which
has triple the slope of the original.  (See Figure }
\ref{fig:constructionzigzag} {)  We want to make the
sequence $(f_n)$ converge in the supremum norm, so before
tripling we add invisible break points to $f_{n-1}$ so that none
of its linear pieces are more than $2^{-n}$ tall.  Letting $f$
be the limit function, observe that if $x \not\in A_n$, then
$f$ coincides with $f_n$ on a neighborhood of $x$,
and thus $f$ is linear on a neighborhood of $x$.  On the other
hand, $A$ is then exactly the set of points where we
tripled infinitely often.}

{Next we describe how to find a closed set $B\subseteq A$
such that $\mu(f(B)) \geq \mu(A)$.  (The $\varepsilon$ in the
statement of the theorem exists in order to bring down the
descriptive complexity of $B$, but we can ignore it for now.)
We want $B\subseteq A$, so of course we throw out of $B$
any interval that leaves $A$.  Also, every time we perform a tripling,
we choose two-thirds of the tripled interval to throw out of $B$.
We do this so that the one-third which we keep has maximal
measure of intersection with $A$.  Observe that $\mu(B) = 0$.
}

{Here is why $\mu(f(B))\geq \mu(A)$.  Let $B_0 = [0,1]$
and let $B_n$ denote
the set of points that remain in $B$ at the end of stage $n$.
By induction, $f_n \upharpoonright B_n$ is
injective (except possibly at break points) and
$\mu(f_n(B_n\cap A)) \geq \mu(A)$.  The key to the
induction is that by the choice of thirds, we always have
$3^n\mu(B_n\cap A) \geq \mu(A)$, and since $f_n$ has
slope $\pm3^n$ on all of $B_n$ and is essentially injective,
$\mu(f_n(B_n \cap A)) = 3^n\mu(B_n\cap A)$.  It now
follows that $\mu(f_n(B_n)) \geq \mu(A)$ for all $n$.
Furthermore, since the continuous image of a
compact set is uniformly compact, we cannot
have $\mu(f(B)) < \mu(A)$,
for this would have been witnessed already for some
$\mu(f_n(B_n))$.  This completes the sketch for the
case where $A$ is $\Pi^0_1$.
}

{If $A$ is $\Pi^0_1(\emptyset')$, we can do
essentially the same construction, tripling on
the stage-$n$ approximation to $A_n$ instead of
$A_n$ itself.  Any interval which is going to leave $A$
eventually leaves the approximations for good, so
the key features of the above argument are maintained
even as the structure of the triplings gets more complicated.}

\subsection{Proof of Theorem \ref{theo:separating}}
The remainder of this section is devoted to the preparation for the proof of Theorem \ref{theo:separating}, and the proof itself.

\begin{lemma}
\label{lemma:shrinking}
Given a $\Pi^0_1(\emptyset')$-set $A \subseteq \uint$ and some open $U \supseteq A$ we can compute some open $V$ with $A \subseteq V \subseteq U$ such that $d(V,U^C) > 0$.
\begin{proof}
Since $U^C$ and $A$ are disjoint closed sets, there is some $N \in \mathbb{N}$ such that $d(U^C,A) > 2^{-N}$. If we actually had access to $A$, we could compute a suitable $N$. Since $A$ is computable from $\emptyset'$, we can compute $N$ with finitely many mindchanges. The monotonicity of correctness here means we can actually obtain suitable $N \in \mathbb{N}_<$. We now obtain $V$ by enumerating an interval $(a,b)$ into $V$ once we have learned that $U$ covers $[a-2^{-N},b+2^{-N}]$ (which is semidecidable in $U \in \mathcal{O}(\mathbb{R})$ and $N \in \mathbb{N}_<$).
\end{proof}
\end{lemma}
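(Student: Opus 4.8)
The plan is to reduce the problem to computing a single ``safety radius'' and then reading off $V$ uniformly. Suppose for the moment that we could compute, from the given names of $A$ and $U$, some $N \in \mathbb{N}$ with $d(A,U^C) > 2^{-N+1}$. Let $V$ be the union of all rational intervals $(a,b)$ for which we ever confirm that $[a-2^{-N},b+2^{-N}] \subseteq U$; inclusion of a compact interval in an open set is semidecidable from the name of $U \in \mathcal{O}(\mathbb{R})$, so this is a legitimate computable presentation of an open set. Every interval admitted into $V$ satisfies $(a,b) \subseteq U$ and $d((a,b),U^C) \geq 2^{-N}$, so $V \subseteq U$ and $d(V,U^C) \geq 2^{-N} > 0$. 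Conversely, if $x \in A$ then $d(x,U^C) > 2^{-N+1}$, so for any rational interval $(a,b) \ni x$ with $b-a < 2^{-N}$ one has $[a-2^{-N},b+2^{-N}] \subseteq B(x,2^{-N+1}) \subseteq U$; such intervals get enumerated, so $A \subseteq V$. Thus the lemma comes down entirely to producing a suitable $N$.

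The key point is that $N$ need not be computed outright, only ``in the limit''. From the $\Pi^0_1(\emptyset')$ presentation of $A$, the oracle $\emptyset'$ can list a neighbourhood basis of $A^C$, i.e.\ it accesses $A$ as a closed (hence compact, since $A \subseteq \uint$) set; and ``$d(A,U^C) > 2^{-k}$'' — equivalently, the compact $2^{-k}$-neighbourhood of $A$, which is computable from the closed-set name of $A$, is contained in $U$ — is a compact-in-open condition, hence $\emptyset'$-semidecidable uniformly in $k$. Since it holds for all large $k$, dovetailing these $\emptyset'$-semidecisions yields some valid $N$, and by the Shoenfield limit lemma $N = \lim_s g(s)$ for a computable $g$. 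I would then run the construction of the first paragraph with the current guess $g(s)$ used in place of $N$ at stage $s$: enumerate $(a,b)$ into $V$ at stage $s$ once $[a-2^{-g(s)},b+2^{-g(s)}] \subseteq U$ is confirmed by stage $s$. Since $g$ converges it assumes only finitely many values, so only finitely many radii $2^{-g(s)}$ are ever used; each interval in $V$ then satisfies $(a,b) \subseteq U$ and has distance at least the minimum of these finitely many radii from $U^C$, giving $V \subseteq U$ and $d(V,U^C) > 0$, while once $g$ has stabilized at $N$ every $x \in A$ still gets a short enough interval enumerated, so $A \subseteq V$. This is precisely what obtaining $N$ ``with finitely many mindchanges'', or as an element of $\mathbb{N}_<$, together with ``monotonicity of correctness'', buys us.

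The main obstacle is this last de-relativization: arguing that the construction is robust to the transient wrong values of $g$. Two failure modes must be checked. An over-estimate of $N$ weakens the confirmation condition, so an interval admitted early may carry only a small — but still strictly positive — margin to $U^C$; an under-estimate strengthens it, so some points of $A$ may be temporarily left out of $V$. Eventual constancy of $g$ repairs the second failure and keeps only finitely many margins in play for the first, so neither is fatal; the care needed is in tracking uniformity in the two input names throughout, and in keeping the $2^{-N+1}$ versus $2^{-N}$ bookkeeping straight so that every point of $A$ is genuinely surrounded by a confirmable interval. The rest is routine.
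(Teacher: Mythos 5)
Your proof is correct and follows essentially the same route as the paper's: obtain a safety radius $2^{-N}$ from $\emptyset'$ via a compact-in-open semidecision for ``$d(A,U^C)>2^{-k}$'', then enumerate into $V$ exactly those rational intervals whose $2^{-N}$-enlargement is confirmed to lie in $U$. The only cosmetic difference is in the de-relativization: the paper exploits monotonicity of correctness to present $N$ as an element of $\mathbb{N}_<$, whereas you run a possibly non-monotone limit-lemma approximation $g(s)$ and observe that only finitely many radii ever occur, so the minimum margin is still positive --- both versions work.
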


For an interval $[a,b]$, let $T_0([a,b]) = [a, a + \frac{b-a}{3}]$, $T_1([a,b]) = [a + \frac{b-a}{3}, a + 2\frac{b-a}{3}]$ and $T_2([a,b]) = [a + 2\frac{b-a}{3}, b]$.

\begin{lemma}
\label{lem:convenientsequence}
Let $A$ be a $\Pi^0_1(\emptyset')$ set. Then there is a computable double-sequence $(I^k_n)_{k,n \in \mathbb{N}}$ of closed intervals with the following properties:
\begin{enumerate}
\item $A = \bigcap_{n \in \mathbb{N}} \bigcup_{k \in \mathbb{N}} I^k_n$.
\item $I^k_n$ and $I^\ell_n$ intersect in at most one point.
\item For {$m < n$}, we find that $\bigcup_{k \in \mathbb{N}} I^k_n$ has positive distance to the complement of $\bigcup_{k \in \mathbb{N}} I^k_m$.
\item $\forall k, n \in \mathbb{N} \quad |I^k_n| \geq |I^{k+1}_n|$
\item Fix $n > 0$. For each $k$ there are $\ell$, $i$ such that $|I^k_{n}| < 3^{-2}|I^\ell_{n-1}|$ and $I^k_n \subseteq T_i(I^\ell_{n-1})$.
\end{enumerate}
\begin{proof}
Any $\Pi^0_1(\emptyset')$ is in particular $\Pi^0_2$, and thus has $\Pi^0_2$-approximation $A = \bigcap_{n \in \mathbb{N}} U_n$. We invoke Lemma \ref{lemma:shrinking} inductively first on $A$ and $U_0$ to obtain $V_0$, then on $A$ and $U_1 \cap V_0$ to obtain $V_1$, and so on. This will ensure Condition (3). We can effectively write any open set $V_n \subseteq \uint$ as a union of closed intervals such that the pairwise intersections contain at most one point. To make Conditions (4,5) work it suffices to subdivide intervals sufficiently much.
\end{proof}
\end{lemma}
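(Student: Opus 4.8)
The plan is to first replace the $\Pi^0_2$-approximation of $A$ by one whose successive levels are separated by positive distances — this is exactly what Lemma \ref{lemma:shrinking} is for — and then to carve each level into closed intervals by a subdivision fine enough to carry the triadic refinement structure demanded by Conditions (4) and (5).

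\textbf{Step 1: a separated approximation.} Since $A$ is $\Pi^0_1(\emptyset')$, it is in particular $\Pi^0_2$, so fix a uniformly computable decreasing sequence of open sets $U_0 \supseteq U_1 \supseteq \cdots$ with $A = \bigcap_n U_n$. Now apply Lemma \ref{lemma:shrinking} iteratively: let $V_0$ be obtained from $A$ and $U_0$, and, given $V_{n-1}$, let $V_n$ be obtained from $A$ and the computable open set $U_n \cap V_{n-1} \supseteq A$. Then $(V_n)$ is uniformly computable, decreasing, and satisfies $A \subseteq V_n \subseteq U_n$ (so $A = \bigcap_n V_n$) together with $d(V_n,(U_n\cap V_{n-1})^C) > 0$; since $(U_n\cap V_{n-1})^C \supseteq V_{n-1}^C$ this gives $d(V_n, V_{n-1}^C) > 0$, and by monotonicity $d(V_n, V_m^C) \geq d(V_{m+1}, V_m^C) > 0$ whenever $m < n$.

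\textbf{Step 2: carving into intervals.} For each $n$, write $V_n$ as a union $\bigcup_k I^k_n$ of closed subintervals of $V_n$ whose pairwise intersections contain at most one point (each connected component $(a,b)$ of $V_n$ being exhausted outward from an interior rational by closed intervals accumulating at $a$ and at $b$). This gives Condition (2), and since $\bigcup_k I^k_n = V_n$, Conditions (1) and (3) are immediate from Step 1. When carving $V_n$ for $n>0$, insist that the breakpoints used include every endpoint and every third-point of every $I^\ell_{n-1}$, and subdivide further so that each resulting interval has length below $3^{-2}$ times that of the (unique) level-$(n-1)$ interval containing it; since each point of $V_n$ lies in some $I^\ell_{n-1}$, hence in some $T_i(I^\ell_{n-1})$, this yields Condition (5). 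Finally, reindexing the intervals at each level in order of non-increasing length gives Condition (4).

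\textbf{The hard part.} The genuine content sits in Condition (3), and Lemma \ref{lemma:shrinking} was engineered precisely to deliver it; everything after Step 1 is bookkeeping. The delicate point — the one I expect to demand the most care — is purely effective: $V_n$ is handed to us only as a c.e.\ enumeration of rational intervals, so making $(k,n) \mapsto I^k_n$ a genuinely computable function requires committing to each closed interval only once it is certified to lie inside $V_n$, never revising it afterwards, and still managing to cover all of $V_n$, to honour the breakpoints forced by level $n-1$, and to keep the lengths non-increasing in $k$. None of this needs a new idea — ``it suffices to subdivide sufficiently much'' — but it should be organised as a single staged construction in which, at stage $s$, one commits intervals inside the part of $V_n$ seen so far while leaving a margin that shrinks to $0$. (Carving $V_n$ instead by the cells of a fixed triadic mesh that meet $V_n$ would make the map obviously computable and equalise all level-$n$ lengths, but re-deriving Condition (3) would then force one to also compute lower bounds for the separations $d(V_{m+1},V_m^C)$, a further complication better avoided.)
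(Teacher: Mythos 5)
Your proposal matches the paper's proof essentially step for step: the iterated application of Lemma \ref{lemma:shrinking} to $U_0$, then to $U_1\cap V_0$, and so on, to secure Condition (3), followed by an almost-disjoint carving of each $V_n$ into closed intervals with sufficient subdivision for Conditions (4) and (5). The paper compresses the second step into ``it suffices to subdivide intervals sufficiently much,'' whereas you spell out the bookkeeping (respecting third-points of level-$(n-1)$ intervals, committing intervals only once certified inside $V_n$, ordering by length); this is a faithful elaboration rather than a different route.
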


\begin{definition}
An interval $J$ is \emph{well-located} relative to $(I^k_n)_{k,n \in \mathbb{N}}$, if for {all} $k, n$ one of the following hold:
\begin{enumerate}
\item $|J \cap I^k_n| \leq 1$
\item $J \supseteq I^k_n$
\item $J \subseteq T_i(I^k_n)$ for some $i \in \{0,1,2\}$
\end{enumerate}
For well-located $J$, let its depth be the greatest $n$ such that $J \subseteq I^k_n$ for some $k$. We call two well-located intervals $J_0, J_1$ {peers}, if whenever $J_b \subsetneq I^k_n$ for both $b \in \{0,1\}$ and some $k, n$, then there is one $i \in \{0,1,2\}$ such that $J_b \subseteq T_i(I^k_n)$ for both $b \in \{0,1\}$.
\end{definition}

Note that our requirements for the $(I^k_n)_{k,n \in \mathbb{N}}$ in Lemma \ref{lem:convenientsequence} in particular ensure that each $I^{k_0}_{n_0}$ is well-located relative to $(I^k_n)_{n,k \in \mathbb{N}}$.

\begin{definition}
\label{def:core}
We are given a double-sequence $(I^k_n)_{k,n \in \mathbb{N}}$ for a set $A$ as in Lemma \ref{lem:convenientsequence} and $\varepsilon > 0$. Let $N_n \in \mathbb{N}$ be chosen sufficiently large such that $\lambda(\bigcup_{k > N_n} I_n^k) < 3^{-n}2^{-n-2}\varepsilon$. Let $b_{k,n} \in \{0,1,2\}$ be chosen such that $\lambda(T_{b_{k,n}}(I^k_n) \cap A) + 3^{-n}2^{-n-3-k}\varepsilon \geq \lambda(T_{c}(I^k_n) \cap A)$ for all $k, n \in \mathbb{N}$ and $c \in \{0,1,2\}$. Let $\ell_{n,k}$ and $i_{n,k}$ be the witnesses for Condition 5 in Lemma \ref{lem:convenientsequence}. We then inductively define $\mathfrak{I}_0^\varepsilon = \{I_0^k \mid k \leq N_0\}$ and: $$\mathfrak{I}_n^\varepsilon = \{I_n^k \mid k \leq N_n \wedge I^{\ell_{n,k}}_{n-1} \in \mathfrak{I}_{n-1} \wedge b_{\ell_{n,k},(n-1)} = i_{n,k}\}$$
\end{definition}

In words, the intervals in $\mathfrak{I}_n^\varepsilon$ are those on the $n$-th level which occur inside a particular third of their parent intervals on the $n-1$-st level which has the maximal measure of its intersection with the set $A$. By construction, the intervals in $\mathfrak{I}_n$ are pairwise {peers}.

\begin{lemma}
\label{lemma:pacomplete}
Starting with a $\Pi^0_1(\emptyset')$-set $A$, we can compute the sets $\mathfrak{I}_n^\varepsilon$ relative to $\emptyset''$.
\begin{proof}
As the double-sequence $(I^k_n)_{k,n \in \mathbb{N}}$ is computable, we can obtain the sufficiently large $N_n$ by using $\emptyset'$. We have $T_{c}(I^k_n) \cap A$ available to us as $\Pi^0_1(\emptyset')$-sets, so $\emptyset''$ lets us compute $\lambda(T_{c}(I^k_n) \cap A) \in \mathbb{R}$. Then getting the choices for the $b_{k,n}$ right can be done computably. The witnesses $\ell_{n,k}$,$i_{n,k}$ can also be found computably.
\end{proof}
\end{lemma}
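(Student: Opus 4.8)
The plan is to verify that every auxiliary parameter entering Definition \ref{def:core} can be obtained uniformly from $\emptyset''$, after which the recursive clause defining $\mathfrak{I}_n^\varepsilon$ becomes a primitive recursion over finite sets. The parameters are the thresholds $N_n$, the witnesses $\ell_{n,k}$ and $i_{n,k}$ of Condition 5 of Lemma \ref{lem:convenientsequence}, and the ``best third'' choices $b_{k,n}$. Granting that these are all $\emptyset''$-computable (uniformly), one has $\mathfrak{I}_0^\varepsilon = \{I^k_0 \mid k \le N_0\}$, a finite $\emptyset''$-computable set, and for $n > 0$ the membership condition ``$k \le N_n$ and $I^{\ell_{n,k}}_{n-1} \in \mathfrak{I}_{n-1}^\varepsilon$ and $b_{\ell_{n,k},n-1} = i_{n,k}$'' is decidable from $\emptyset''$ once the finite set $\mathfrak{I}_{n-1}^\varepsilon$ is in hand, so induction on $n$ gives uniform $\emptyset''$-computability of all the $\mathfrak{I}_n^\varepsilon$.

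First I would dispatch the two cheap ingredients, which cost at most $\emptyset'$. The witnesses $\ell_{n,k}, i_{n,k}$ exist by Condition 5 of Lemma \ref{lem:convenientsequence}, and since $(I^k_n)_{k,n}$ is a computable double-sequence and the containments and length inequalities in that condition are (semi)decidable in the data, a search produces the first such pair. For $N_n$: since the $I^k_n$ have pairwise disjoint interiors and lie in $\uint$, the tail $\lambda(\bigcup_{k>N} I^k_n)$ equals $\sum_{k>N} |I^k_n|$, a left-c.e.\ real with uniformly computable summands; hence $\emptyset'$ can approximate each such tail to arbitrary precision, and since the tails converge to $0$ a finite search (with a small buffer to make the strict comparison effective) locates an $N_n$ with $\lambda(\bigcup_{k>N_n} I^k_n) < 3^{-n}2^{-n-2}\varepsilon$.

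The step that genuinely requires $\emptyset''$, and which I expect to be the crux, is the choice of the $b_{k,n}$. For fixed $c,k,n$, the set $T_c(I^k_n)$ is a computable closed interval and $A$ is $\Pi^0_1(\emptyset')$, so $T_c(I^k_n) \cap A$ is, uniformly in $c,k,n$, a $\Pi^0_1(\emptyset')$ class; its complement inside $\uint$ is then an $\emptyset'$-c.e.\ open set, so $\lambda(T_c(I^k_n)\cap A)$ is right-c.e.\ relative to $\emptyset'$, and a real that is right-c.e.\ relative to an oracle $X$ is computable from $X'$, so this measure is $\emptyset''$-computable uniformly in $c,k,n$. Hence $\emptyset''$ can compute rational approximations to $\lambda(T_0(I^k_n)\cap A)$, $\lambda(T_1(I^k_n)\cap A)$ and $\lambda(T_2(I^k_n)\cap A)$, each within half the slack $3^{-n}2^{-n-3-k}\varepsilon$, and set $b_{k,n}$ to a $c$ maximising the approximation; the triangle inequality then gives $\lambda(T_{b_{k,n}}(I^k_n)\cap A) + 3^{-n}2^{-n-3-k}\varepsilon \ge \lambda(T_c(I^k_n)\cap A)$ for all $c$, as Definition \ref{def:core} demands.

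Putting it together, $\emptyset''$ uniformly produces $N_n$, the witnesses $\ell_{n,k}, i_{n,k}$, and the $b_{k,n}$, and the induction of the first paragraph then yields that $\mathfrak{I}_n^\varepsilon$ is $\emptyset''$-computable uniformly in $n$ (and in $\varepsilon$, if $\varepsilon$ is given as a $\emptyset''$-computable real). The only conceptual point is that exactly computing the Lebesgue measure of a $\Pi^0_1(\emptyset')$ set lives one jump above deciding the set, which is exactly why $\emptyset''$ rather than $\emptyset'$ appears; the remaining work is bookkeeping over finite objects.
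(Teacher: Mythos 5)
Your proposal is correct and follows essentially the same route as the paper's proof: $N_n$ and the witnesses $\ell_{n,k},i_{n,k}$ come for free (at most $\emptyset'$) from the computability of the double-sequence, while the one genuinely $\emptyset''$-level step is computing $\lambda(T_c(I^k_n)\cap A)$ for the $\Pi^0_1(\emptyset')$ sets $T_c(I^k_n)\cap A$, after which the $b_{k,n}$ are selected using the built-in slack. Your added detail --- that the measure is right-c.e.\ relative to $\emptyset'$ and that the slack term $3^{-n}2^{-n-3-k}\varepsilon$ is exactly what makes an approximate maximisation legitimate --- is a faithful elaboration of what the paper leaves implicit, not a different argument.
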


\begin{lemma}
\label{label:measurebounds}
$3^{-n} \geq \lambda(\bigcup \mathfrak{I}_n^\varepsilon) \geq 3^{-n}(\lambda(A)-\varepsilon)$
\begin{proof}
We prove both inequalities by induction. For the first, the base case is trivial. For the induction step, we note that $(\bigcup \mathfrak{I}_{n+1}^\varepsilon) \subseteq \bigcup_{I^k_n \in \mathfrak{I}_n^\varepsilon} T_{b_{k,n}}(I^k_n)$, and that $\lambda \left (\bigcup_{I^k_n \in \mathfrak{I}_n^\varepsilon} T_{b_{k,n}}(I^k_n) \right ) = \frac{1}{3} \lambda(\bigcup \mathfrak{I}_n^\varepsilon)$.

For the second inequality, we prove a stronger claim, namely that $\lambda(A \cap \bigcup \mathfrak{I}_n^\varepsilon) \geq 3^{-n}(\lambda(A)-(1-2^{-n-1})\varepsilon)$. The base case follows from $\bigcup_{k \in \mathbb{N}} I_0^k \supseteq A$ together with the choice of $N_0$. We then observe that $A \cap (\bigcup \mathfrak{I}_{n+1}^\varepsilon) = A \cap \bigcup \{I^k_{n+1} \mid \exists I^\ell_n \in \mathfrak{I}_n^\varepsilon \ \exists i \in \{0,1,2\} \ \ I^k_{n+1} \subseteq T_i(I^\ell_n)\}$. By definition of $b_{\ell,n}$ in Definition \ref{def:core}, this also means that $\lambda(A \cap (\bigcup \mathfrak{I}_{n+1}^\varepsilon)) + 3^{-n}2^{-n-2}\varepsilon \geq 3^{-1}\lambda(A \cap \bigcup \{I^k_{n+1} \mid \exists I^\ell_n \in \mathfrak{I}_n^\varepsilon \ \ I^k_{n+1} \subseteq T_{b_{\ell,n}}(I^\ell_n)\})$. The set on the right hand side differs from $\bigcup \mathfrak{I}_n^\varepsilon$ only by the fact that in the latter, we restrict to $\ell \leq N_n$. By the induction hypothesis together with the guarantee that $\lambda(\bigcup_{k > N_n} I_n^k) < 3^{-n}2^{-n-2}\varepsilon$ we get the desired claim.
\end{proof}
\end{lemma}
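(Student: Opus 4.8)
The plan is to prove the two inequalities separately, each by induction on $n$. The upper bound $\lambda(\bigcup \mathfrak{I}_n^\varepsilon) \leq 3^{-n}$ is the easy half: at level $0$ the set $\mathfrak{I}_0^\varepsilon$ is a finite collection of subintervals of $\uint$ with pairwise intersections of cardinality at most one (Condition~2 of Lemma \ref{lem:convenientsequence}), so its union has measure at most $1$; and in the inductive step every interval of $\mathfrak{I}_{n+1}^\varepsilon$ sits inside exactly one of the thirds $T_{b_{k,n}}(I^k_n)$ of a parent $I^k_n \in \mathfrak{I}_n^\varepsilon$ (using Condition~5 and the fact that the members of $\mathfrak{I}_n^\varepsilon$ are pairwise peers, so distinct parents give rise to almost-disjoint families of children). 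Since each $T_i$ rescales length by $\tfrac13$, this yields $\lambda(\bigcup \mathfrak{I}_{n+1}^\varepsilon) \leq \lambda\big(\bigcup_{I^k_n \in \mathfrak{I}_n^\varepsilon} T_{b_{k,n}}(I^k_n)\big) = \tfrac13\lambda(\bigcup \mathfrak{I}_n^\varepsilon) \leq 3^{-(n+1)}$.

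For the lower bound I would not induct on the stated inequality but on the stronger statement $\lambda(A \cap \bigcup \mathfrak{I}_n^\varepsilon) \geq 3^{-n}(\lambda(A) - (1 - 2^{-n-1})\varepsilon)$; since $A \cap \bigcup \mathfrak{I}_n^\varepsilon \subseteq \bigcup \mathfrak{I}_n^\varepsilon$ and $1 - 2^{-n-1} < 1$, this implies the lemma, and the gap between the coefficient $1 - 2^{-n-1}$ and $1$ is exactly the budget into which I plan to fit the approximation errors introduced at each level. The base case holds because $\bigcup_k I_0^k \supseteq A$ (Condition~1) and the only level-$0$ intervals we discard are those indexed above $N_0$, whose total measure is below $2^{-2}\varepsilon$ by the choice of $N_0$ in Definition \ref{def:core}.

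For the inductive step, the point is that $A \cap \bigcup \mathfrak{I}_{n+1}^\varepsilon$ is, after deleting the children indexed above $N_{n+1}$, exactly the part of $A$ that lies in the ``winning'' third $T_{b_{\ell,n}}(I^\ell_n)$ of some parent $I^\ell_n \in \mathfrak{I}_n^\varepsilon$; here I use that $A \subseteq \bigcup_k I^k_{n+1}$ and that each $I^k_{n+1}$, being well-located, lies in a single third of a single level-$n$ interval, so up to sets of measure zero nothing of $A \cap T_{b_{\ell,n}}(I^\ell_n)$ escapes. The defining property of $b_{\ell,n}$ says $T_{b_{\ell,n}}(I^\ell_n)$ realizes the maximum over the three thirds of $\lambda(A \cap T_c(I^\ell_n))$ up to an additive slack, and since the three thirds partition $I^\ell_n$ up to endpoints, that maximum is at least $\tfrac13\lambda(A \cap I^\ell_n)$. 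Summing over the (almost-disjoint) parents in $\mathfrak{I}_n^\varepsilon$ — the per-parent slacks carry a $2^{-\ell}$ factor, so the series converges — gives $\lambda(A \cap \bigcup \mathfrak{I}_{n+1}^\varepsilon) \geq \tfrac13\lambda(A \cap \bigcup \mathfrak{I}_n^\varepsilon) - E_{n+1}$ for an explicitly computable error $E_{n+1}$ built from the summed slacks and the truncation loss $\lambda(\bigcup_{k > N_{n+1}} I^k_{n+1})$; feeding in the inductive hypothesis then leaves us to check that $E_{n+1}$ fits inside $\tfrac13 \cdot 3^{-n}\big((1 - 2^{-n-2}) - (1 - 2^{-n-1})\big)\varepsilon$.

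I expect the main obstacle to be precisely this last bookkeeping: one has to make the near-optimality slacks and truncation losses from Definition \ref{def:core} small enough, relative to the telescoping budget $(1-2^{-n-1}) \to 1$, that the induction genuinely closes for every $n$ and not merely asymptotically. Getting the three families of constants — the $N_n$, the $b_{k,n}$-slacks, and the exponent in the strengthened hypothesis — to line up is the only delicate part; everything else (the $\tfrac13$-rescaling, the peer and well-located structure guaranteeing almost-disjointness, and the containment $A \subseteq \bigcup_k I^k_n$) is a direct appeal to Lemma \ref{lem:convenientsequence} and Definition \ref{def:core}.
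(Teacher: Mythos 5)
Your proposal is essentially identical to the paper's proof: the same $\tfrac13$-scaling induction for the upper bound, and for the lower bound the very same strengthened hypothesis $\lambda(A\cap\bigcup\mathfrak{I}_n^\varepsilon)\geq 3^{-n}(\lambda(A)-(1-2^{-n-1})\varepsilon)$, established by combining the near-maximality of the chosen thirds with the geometrically summable slacks and truncation losses from Definition~\ref{def:core}. The bookkeeping you defer is indeed the only delicate point (with the constants exactly as printed, the summed slack at level $n$ is about $3^{-n}2^{-n-2}\varepsilon$ against a telescoping budget of $3^{-n-1}2^{-n-2}\varepsilon$ --- a discrepancy the paper itself glosses over and which is repaired by shrinking the slack in Definition~\ref{def:core} by a constant factor), so your plan matches the paper's argument in both structure and substance.
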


\begin{lemma}
\label{lemma:locality}
For a sequence $(I^k_n)_{k,n \in \mathbb{N}}$ as in Lemma \ref{lem:convenientsequence} and $x \notin A$, it holds that there exists some $a < x < b$ and some $N \in \mathbb{N}$ such that $I^k_n \cap (a,b) = \emptyset$ for every $n > N$.
\begin{proof}
If $x \notin A$, then there is some $N$ with $x \notin \bigcup_{k \in \mathbb{N}} I^k_N$. By Condition (3) of Lemma \ref{lem:convenientsequence}, we have that $x$ has positive distance to $\bigcup_{k \in \mathbb{N}} I^k_{N+1}$, hence there exists an interval $(a,b)$ around $x$ disjoint from $\bigcup_{k \in \mathbb{N}} I^k_{N+1}$, and by monotonicity, also from $\bigcup_{k \in \mathbb{N}} I^k_{n}$ for every $n > N$.
\end{proof}
\end{lemma}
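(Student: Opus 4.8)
The plan is to read off from $x\notin A$ a finite level $N$ past which none of the intervals $I^k_n$ come near $x$, using Condition (1) of Lemma~\ref{lem:convenientsequence} to locate $N$ and Condition (3) to convert ``not near $x$'' into a fixed open neighbourhood of $x$. So this is really just an unwinding of the two relevant clauses of Lemma~\ref{lem:convenientsequence}, plus the elementary fact that having positive distance to the complement of a set means being contained in that set.

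Concretely, since $A=\bigcap_n\bigcup_k I^k_n$ and $x\notin A$, I would first choose $N$ with $x\notin\bigcup_k I^k_N$, so that $x$ lies in the complement of $\bigcup_k I^k_N$. Next I would apply Condition (3) with $m=N<n=N+1$: the set $\bigcup_k I^k_{N+1}$ has positive distance, say $\delta>0$, to the complement of $\bigcup_k I^k_N$, and since $x$ is a point of that complement we get $d\bigl(x,\bigcup_k I^k_{N+1}\bigr)\ge\delta$. Hence the interval $(a,b):=(x-\delta,x+\delta)$ is disjoint from $\bigcup_k I^k_{N+1}$. For any larger level $n>N+1$, Condition (3) with $m=N+1$ says $\bigcup_k I^k_n$ has positive distance to the complement of $\bigcup_k I^k_{N+1}$, hence is disjoint from it, i.e.\ $\bigcup_k I^k_n\subseteq\bigcup_k I^k_{N+1}$; therefore $(a,b)\cap\bigcup_k I^k_n=\emptyset$ for every $n>N$, which is precisely the assertion that $I^k_n\cap(a,b)=\emptyset$ for all $k$ and all $n>N$.

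There is no genuine obstacle here; the one point I would state carefully is that Condition (3) is doing double duty. First it furnishes the single buffer radius $\delta$ separating $x$ from level $N+1$; second, via the observation that positive distance to the complement of a set forces containment in that set, it supplies the nesting $\bigcup_k I^k_n\subseteq\bigcup_k I^k_{N+1}$ that makes the same $\delta$ work uniformly for all $n>N$ at once. This locality statement is exactly what is needed for clause (1) of Theorem~\ref{theo:separating}: on $(a,b)$ the construction performs no tripling after stage $N$, so the limit function coincides there with a fixed piecewise-linear approximant and is in particular piecewise-linear near $x$.
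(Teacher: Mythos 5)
Your proposal is correct and follows essentially the same route as the paper: pick $N$ with $x\notin\bigcup_k I^k_N$, use Condition (3) to get a positive distance from $x$ to $\bigcup_k I^k_{N+1}$ and hence a buffer interval $(a,b)$, and then use the nesting of the levels to handle all $n>N$ at once. The only cosmetic difference is that you explicitly derive the nesting $\bigcup_k I^k_n\subseteq\bigcup_k I^k_{N+1}$ from Condition (3), where the paper simply cites monotonicity; both justifications are valid.
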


\begin{proof}[Proof of Theorem \ref{theo:separating}]
{\bf Preparation:} We note that for each $\Pi^0_2$-set $A$ and each $n \in \mathbb{N}$, there is a $\Pi^0_1(\emptyset')$-set $C$ with $C \subseteq A$ and $\lambda(C) \geq \lambda(A) - 2^{-n}$. We can assume w.l.o.g. that $A$ is already $\Pi^0_1(\emptyset')$. We then obtain a computable double-sequence $(I^k_n)_{k,n \in \mathbb{N}}$ as in Lemma \ref{lem:convenientsequence}.

{\bf Construction:} We obtain our function $f$ as the limit of functions $f_{N,K}$ for $N,K \in \mathbb{N}$. $f_{0,0}$ is the identity on $\uint$. The construction of $f_{N,K}$ takes into account only intervals $I^k_n$ with $n \leq N$ and $|I^k_n| > 2^{-K}$, of which there are only finitely many (and by monotonicity of the enumerations, we can be sure that we have found them all). We process intervals with smaller $n$ first, and replace the linear growth $f$ currently has on $I^k_n$ by a triple as shown in Figure~\ref{fig:constructionzigzag}. Property 5 from Lemma \ref{lem:convenientsequence} ensures that through the process, the function is linear on each interval $I^k_n$ yet to be processed. We then define $f_N := \lim_{K \to \infty} f_{N,K}$ and $f := \lim_{N \to \infty} f_N$.

\begin{figure}[ht]
\centering
\includegraphics[width=0.8\textwidth]{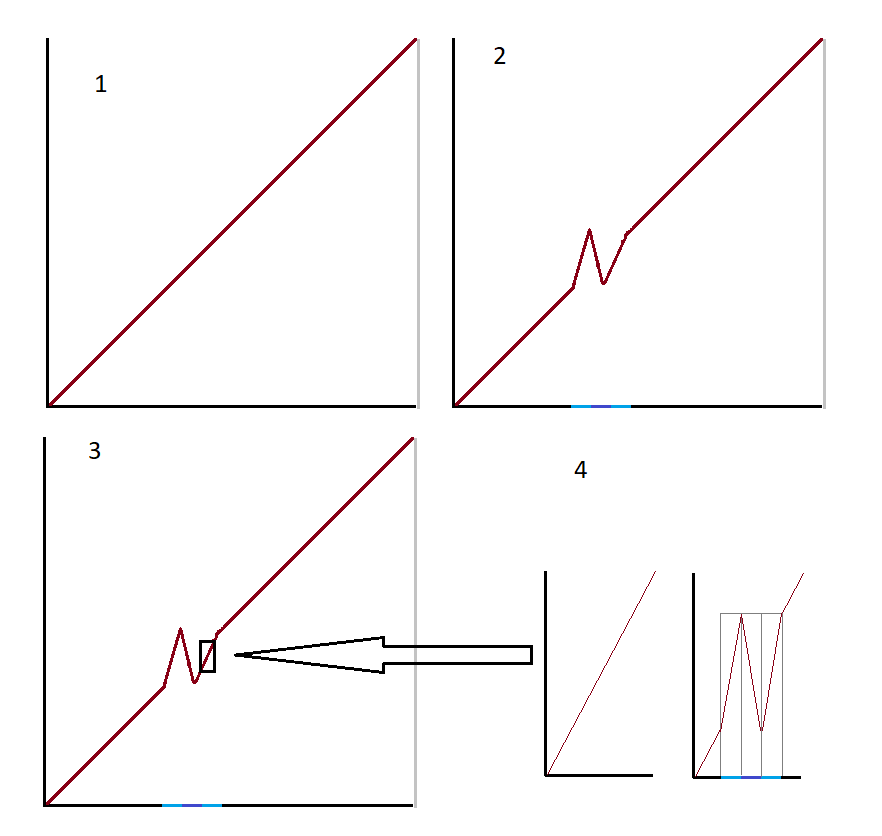}
\caption{Demonstrating the interative construction of the function $f$ in the proof of Theorem  \ref{theo:separating}}
\label{fig:constructionzigzag}
\end{figure}

That the first limit has a computable rate of convergence follows from the monotonicity of $|I^k_n|$ in $k$. Since the size of the intervals shrinks sufficiently fast compared to the potential growth rates of $f_N$, we see that we also do have a computable rate of convergence of $(f_N)_{N \in \mathbb{N}}$.

{\bf Property 1:} If $x \notin A$, we can invoke Lemma \ref{lemma:locality} to obtain a neighbourhood $U$ of $x$ that is disjoint from any $I^k_n$ for $n > N$. But that ensures that $f|_U$ is $3^N$-Lipschitz, and and by potentially restricting the interval further we can make $f$ locally bi-Lipschitz.

\begin{lemma}
\label{lemma:fwell-located}
\begin{enumerate}
\item Let $J$ be well-located at depth $N$. Then $f[J] = f_N[J]$.
\item Let $J$ be well-located at depth $n$. Then $\lambda(f[J]) = 3^n\lambda(J)$.
\item Let $J_0, J_1$ be {peer} well-located intervals. Then $|f[J_0] \cap f[J_1]| \leq 1$.
\end{enumerate}
\begin{proof}
\begin{enumerate}
\item First, we observe that for any $M > N$ it holds that $f_{M}[J] = f_N[J]$, since all modifications based on intervals $I^k_n$ with $n > N$ will affect $f|_J$ at most by locally replacing the shape of the graph with a different shape having the same range. It remains to argue that the identity of the image $f_{M}[J]$ is preserved by limits. Since $\uint$ is compact and Hausdorff, we find that $\mathcal{A}(\uint) \cong \mathcal{K}(\uint)$, hence we can compute $g[J] \in \mathcal{A}(\uint)$ from $g$ and $A \in \mathcal{A}(\uint)$. We have access to $g[A] \in \mathcal{V}(\uint)$ given $A \in \mathcal{V}(\uint)$ anyway. Since $\mathcal{A}(\uint) \wedge \mathcal{V}(\uint)$ is Hausdorff \cite{pauly-locallycompact}, this yields the claim.
\item By (1.), it suffices to show $\lambda(f_n[J]) = 3^n\lambda(J)$ instead. Now $(f_n)|_J$ is just a linear function with slope $3^n$, which yields the claim.
\item If $J_0,J_1$ are {peers} and well-located, and $J_0 \subseteq I^k_n$ but $J_1 \nsubseteq I^k_n$, then $I^k_n$ and $J_1$ are also {peers}. It thus suffices to prove the claim for the case where $J_0 = I^{k_0}_{n+1}$ and $J_1 = I^{k_1}_{n+1}$. These are both contained in the same $T_i(I^\ell_{n})$, and $(f_n)|_{T_i(I^\ell_{n})}$ is a linear function. Since $|J_0 \cap J_1| \leq 1$ it follows that $|f_n[J_0] \cap f_n[J_1]| \leq 1$. By $(1.)$, this already yields the claim.
\end{enumerate}
\end{proof}
\end{lemma}

{\bf Property 2:} We obtain the desired set $B$ as $B = \bigcap_{n \in \mathbb{N}} \left (\bigcup \mathfrak{I}_n^\varepsilon \right )$. Since each $\mathfrak{I}^\varepsilon_n$ is a finite collection of closed intervals, $B$ is indeed closed. Since the intersection is nested and $\lambda(\bigcup \mathfrak{I}_n^\varepsilon) \leq 3^{-n}$ by the first part of Lemma \ref{label:measurebounds}, we conclude that $\lambda(B) = 0$. Since the intervals in $\mathfrak{I}_n^\varepsilon$ are well-located and pairwise {peers}, we know that $\lambda(f([\bigcup \mathfrak{I}_n^\varepsilon])) = 3^n\lambda(\bigcup \mathfrak{I}_n^\varepsilon)$ by Lemma \ref{lemma:fwell-located} 2\&3. Invoking the second inequality from Lemma \ref{label:measurebounds} then lets us conclude $\lambda(f([\bigcup \mathfrak{I}_n^\varepsilon])) \geq (\lambda(A)-\varepsilon)$. Since this estimate holds for every stage of a nested intersection of compact sets, it follows that $\lambda(f[B]) \geq \lambda(A)-\varepsilon$ as desired. That $B$ is obtainable by an oracle of the claimed strength follows from Lemma \ref{lemma:pacomplete}.
\end{proof}

\section{Luzin's (N), absolute continuity and bounded variation}
\label{sec:boundedvariation}

We recall the definitions of absolute continuity and bounded variation:

\begin{definition}
A function $f : \uint \to \mathbb{R}$ is \emph{absolutely continuous}, if {for every
$\varepsilon>0$} and every  $x_0 < y_0 < x_1 < y_1 \ldots < x_k < y_k$ {there is a
$\mathbb \delta >0$ such} that:$$\Sigma_{i \leq k} |y_i - x_i| < \delta \quad \Rightarrow \quad \Sigma_{i \leq k} |f(y_i) - f(x_i)| < \varepsilon.$$
\end{definition}

\begin{definition}
A function $f : \uint \to \mathbb{R}$ has bounded variation, if there is some bound $M \in \mathbb{N}$ such that for any $k \in \mathbb{N}$ and any $x_0 < x_1 < \ldots < x_k$ it holds that $$\Sigma_{i < k} |f(x_{i+1}) - f(x_i)| < M$$
\end{definition}

Being absolutely continuous implies having bounded variation. These notions are related to Luzin's (N) by the following classical fact:
\begin{fact}[see \cite{saks}, Theorem VII.6.7]
\label{fact:absolutevariation}
A {continuous} function is absolutely continuous iff it has both bounded variation and Luzin's (N).
\end{fact}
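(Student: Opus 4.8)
This is the classical Banach--Zarecki theorem, and the plan is to prove both implications by hand. Suppose first that $f:\uint\to\mathbb{R}$ is continuous and absolutely continuous. Boundedness of variation is immediate: letting $\delta$ witness absolute continuity for $\varepsilon=1$ and splitting $\uint$ into $\lceil 1/\delta\rceil$ subintervals each of length below $\delta$, the variation of $f$ over each is at most $1$, so the total variation is finite. For Luzin's (N), fix a null set $E$ and $\varepsilon>0$, take $\delta$ witnessing absolute continuity for $\varepsilon$, and cover $E$ by an open $U=\bigsqcup_i(a_i,b_i)$ with $\lambda(U)<\delta$. On each $[a_i,b_i]$ the continuous $f$ attains its maximum $M_i$ and minimum $m_i$, say at $x_i$ and $y_i$; the intervals with endpoints $x_i,y_i$ are pairwise disjoint of total length below $\delta$, so $\sum_i (M_i-m_i)=\sum_i|f(y_i)-f(x_i)|\le\varepsilon$ by absolute continuity. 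Since $f(E)\subseteq\bigcup_i[m_i,M_i]$ we get $\lambda(f(E))\le\varepsilon$, and letting $\varepsilon\to 0$ finishes this direction.

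For the converse, assume $f$ is continuous, of bounded variation, and has Luzin's (N), and let $V$ be its variation function, $V(x)=$ the total variation of $f$ on $[0,x]$, which is continuous and nondecreasing because $f$ is continuous of bounded variation. The crux is to show that $V$ also has Luzin's (N). For this I would invoke the Banach indicatrix theorem: for every Borel $B\subseteq\uint$, the Lebesgue--Stieltjes measure $\mu_V$ of $V$ satisfies $\mu_V(B)=\int_{\mathbb{R}}\#\bigl(f^{-1}(y)\cap B\bigr)\,dy$, since both sides are finite Borel measures in $B$ and agree on intervals, where the identity is exactly the Banach indicatrix formula applied to the restriction of $f$. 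Now if $\lambda(B)=0$ then $\lambda(f(B))=0$ by Luzin's (N) for $f$; the integrand is supported on $f(B)$ and is bounded by $y\mapsto\#f^{-1}(y)$, which integrates to the finite total variation of $f$ and is therefore finite almost everywhere, so the integral over the null set $f(B)$ vanishes. Hence $\mu_V(B)=0$, and since $\lambda(V(B))\le\mu_V(B)$ for continuous nondecreasing $V$ (push Lebesgue measure through the generalized inverse $V^{-1}(t)=\inf\{x:V(x)\ge t\}$, using $V(V^{-1}(t))=t$ to see $\{t:V^{-1}(t)\in B\}\subseteq V(B)$), we conclude $\lambda(V(B))=0$, so $V$ has Luzin's (N).

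It remains to deduce absolute continuity. A continuous nondecreasing function with Luzin's (N) is absolutely continuous: Lebesgue-decompose $\mu_V=\mu_{\mathrm{ac}}+\mu_{\mathrm{s}}$ relative to $\lambda$ and fix a Borel $B$ with $\lambda(B)=0$ carrying all of $\mu_{\mathrm{s}}$; then $\mu_{\mathrm{s}}(\uint)=\mu_V(B)\le\lambda(V(B))=0$ by Luzin's (N) for $V$, so $\mu_V\ll\lambda$ and $V(x)-V(0)=\int_0^x V'$, i.e.\ $V$ is absolutely continuous. Finally, since $V(y)-V(x)\ge|f(y)-f(x)|$ for $x\le y$, any $\delta$ witnessing absolute continuity of $V$ for a given $\varepsilon$ also works for $f$, because $\sum_i|f(y_i)-f(x_i)|\le\sum_i(V(y_i)-V(x_i))<\varepsilon$. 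The main obstacle is the crux lemma that $V$ inherits Luzin's (N) -- this is where the bounded-variation hypothesis does the real work, via the Banach indicatrix -- together with the routine but fiddly identities comparing $\mu_V$, the Lebesgue measure of $V$-images, and the indicatrix; the forward direction and the final transfer from $V$ to $f$ are bookkeeping.
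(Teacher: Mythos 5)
The paper offers no proof of this statement; it is quoted as a classical fact (the Banach--Zarecki theorem) with a citation to Saks, so there is no in-paper argument to compare against and your proposal must be judged on its own merits. It is essentially sound. The forward direction is the standard covering argument (with the routine caveat that absolute continuity is stated for finite families, so the bound $\sum_i(M_i-m_i)\le\varepsilon$ comes from finite partial sums). For the converse, routing everything through the identity $\mu_V(B)=\int\#\bigl(f^{-1}(y)\cap B\bigr)\,dy$ is a legitimate and rather elegant alternative to the more common proof via a.e.\ differentiability and the estimate $\lambda(f(E))\le\int_E|f'|$; it isolates exactly where bounded variation enters (a.e.\ finiteness of the indicatrix). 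Two points need tightening. First, extending Banach's indicatrix formula from intervals to all Borel $B$ requires checking that $y\mapsto\#\bigl(f^{-1}(y)\cap B\bigr)$ is measurable for every Borel $B$; this follows from a monotone-class argument but is not automatic from ``two measures agreeing on intervals.'' Second, you need two opposite inequalities between $\lambda(V(B))$ and $\mu_V(B)$ --- namely $\lambda(V(B))\le\mu_V(B)$ to deduce Luzin's (N) for $V$, and $\mu_V(B)\le\lambda(V(B))$ in the Lebesgue-decomposition step --- but the generalized-inverse containment $\{t:V^{-1}(t)\in B\}\subseteq V(B)$ that you sketch proves only the second, and you attach it to the first. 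The first inequality is true but needs the outer-regularity argument: cover $B$ by $\bigcup_i(a_i,b_i)$ of small $\mu_V$-measure and use $V(B)\subseteq\bigcup_i[V(a_i),V(b_i)]$. Nothing breaks, since both inequalities are standard, but the justification as written points at the wrong one. In fact you can delete the entire detour through Luzin's (N) for $V$: once you know $\mu_V(B)=0$ for every null Borel $B$, that is precisely $\mu_V\ll\lambda$, which already gives absolute continuity of $V$ and hence of $f$.
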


We observe that being absolutely continuous is a $\Pi^0_3$-property, and recall that Luzin's (N) is $\Pi^1_1$-complete \cite{holicky}. As such, restricting our attention to functions of bounded variation should alter the situation significantly.

\begin{proposition}
\label{prop:weak2random}
If {$f : \uint \to \mathbb{R}$} is computable and absolutely continuous, then $f$ reflects weak-2-randomness.
\begin{proof}
First, we consider how we can exploit connectedness of $\mathbb{R}$ to say something about the images of open sets under computable functions. We are given open sets in the form $U = \bigcup_{i \in \mathbb{N}} I_{i}$, where each $I_{i}$ is an open interval with rational endpoints. We can then compute $\sup f(I_{i})$ and $\inf f(I_{i})$ (as these are equal to $\max f(\overline{I_{n,i}})$ and $\min f(\overline{I_{n,i}})$, and we can compute minima and maxima of continuous functions on compact sets). Let $V = \bigcup_{i \in \mathbb{N}} (\inf f(I_{i}),\sup f(I_{i}))$. We note that we can compute $V$ from $U$, that $V \subseteq f[U]$, and that $f[U] \setminus V$ can only contain computable points. In particular, $\lambda(V) = \lambda(f[U])$.

Now we assume that $f$ additionally is absolutely continuous, and that we are dealing with a $\Pi^0_2$-null set $A = \bigcap_{n \in \mathbb{N}} U_n$ witnessing that some $x \in A$ is not weak-2-random. We assume that $U_{n+1} \subseteq U_n$. As $A$ is null, we know that $\lim_{n \to \infty} \lambda(U_n) = 0$. Since $f$ is absolutely continuous, we also have $\lim_{n \to \infty} f[U_n] = 0$.   Let $V_n$ be obtained from $U_n$ as in the first paragraph, and $B = \bigcap_{n \in \mathbb{N}} V_n$. It follows that $\lambda(B) = 0$, and moreover, $f[A]$ is contained in $B$ with the potential exception of some computable points. So we can conclude that $f(x)$ is not weak-2-random, either because $f(x)$ is computable, or because $f(x) \in B$.
\end{proof}
\end{proposition}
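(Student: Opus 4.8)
The plan is to prove the contrapositive: if $x \in \uint$ is not weak-2-random, then $f(x)$ is not weak-2-random. Fix a nested sequence $(U_n)_{n \in \mathbb N}$ of uniformly $\Sigma^0_1$ open subsets of $\uint$ with $x \in \bigcap_n U_n$ and $\lambda(U_n) \to 0$; this is the standard form of a test witnessing that $x$ fails weak-2-randomness, and $\lambda(U_n) \to 0$ follows from $\lambda(\bigcap_n U_n) = 0$ together with continuity of measure from above (note $\lambda(U_0) \le 1$). The goal is to turn this into a test witnessing that $f(x)$ fails weak-2-randomness.

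The first step copes with the fact that $f[U_n]$ need not be effectively open. Write $U_n = \bigcup_i I_{n,i}$ as a union of rational open intervals (overlaps are harmless here). By connectedness of $\mathbb R$, each $f[I_{n,i}]$ is an interval whose closure is $[\inf_{\overline{I_{n,i}}} f,\ \sup_{\overline{I_{n,i}}} f]$, so in particular $(\inf_{\overline{I_{n,i}}} f,\ \sup_{\overline{I_{n,i}}} f) \subseteq f[I_{n,i}]$; and since $f$ is computable and $\overline{I_{n,i}}$ is a rational closed interval, both endpoints are computable reals, uniformly in $n$ and $i$. Set $V_n = \bigcup_i (\inf_{\overline{I_{n,i}}} f,\ \sup_{\overline{I_{n,i}}} f)$. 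Then $V_n$ is uniformly $\Sigma^0_1$, $V_n \subseteq f[U_n]$, and for any $y \in f[U_n] \setminus V_n$, picking $i$ with $y \in f[I_{n,i}]$ forces $y$ to be an endpoint $\inf_{\overline{I_{n,i}}} f$ or $\sup_{\overline{I_{n,i}}} f$; hence $f[U_n] \setminus V_n$ lies in a countable set $S_n$ of computable reals, and $\lambda(V_n) = \lambda(f[U_n])$.

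The second step uses absolute continuity to see $\lambda(f[U_n]) \to 0$. Classically, applying the $\varepsilon$–$\delta$ definition of absolute continuity to finite subfamilies of the connected components of an open set — and noting that the oscillation of $f$ over $[a,b]$ equals $|f(c) - f(d)|$ for some $c, d \in [a,b]$, which is exactly what the definition controls — yields: for every $\varepsilon > 0$ there is $\delta > 0$ with $\lambda(U) < \delta \Rightarrow \lambda(f[U]) \le \varepsilon$ for every open $U \subseteq \uint$. Thus $\lambda(V_n) = \lambda(f[U_n]) \to 0$, so $B := \bigcap_n V_n$ (replace $V_n$ by $V_0 \cap \cdots \cap V_n$ if a nested sequence is wanted) is a $\Pi^0_2$ set with $\lambda(B) = 0$. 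Finally, $f(x) \in f[\bigcap_n U_n] \subseteq \bigcap_n f[U_n] \subseteq \bigcap_n (V_n \cup S_n)$, so either $f(x) \in S_n$ for some $n$ — in which case $f(x)$ is a computable real, hence not even Kurtz random — or $f(x) \in V_n$ for all $n$, i.e. $f(x) \in B$, a $\Pi^0_2$ null set. Either way $f(x)$ is not weak-2-random.

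I expect the main obstacle to be the first step. That the image of an effectively open set under a computable function need not be effectively open is precisely the phenomenon underlying the failure of Martin-L\"of-randomness reflection, so the argument has to genuinely exploit the real line: connectedness pins $f[I_{n,i}]$ down as an interval determined by two computable extrema, and the at most countably many boundary points it can contribute beyond $V_n$ are necessarily computable and therefore invisible to weak-2-randomness. The passage from absolute continuity on finite tuples to arbitrary open sets, and the resulting $\lambda(f[U_n]) \to 0$, is routine once oscillation is rewritten as a single difference $|f(c) - f(d)|$.
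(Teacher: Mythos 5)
Your proposal is correct and follows essentially the same route as the paper's own proof: the same use of connectedness to replace $f[U_n]$ by an effectively open $V_n$ differing only by computable endpoint values, and the same appeal to absolute continuity to force $\lambda(f[U_n])\to 0$. The extra care you take (nesting the $V_n$ via finite intersections, and spelling out the passage from the finite-tuple definition of absolute continuity to open sets via oscillations) only makes explicit what the paper leaves implicit.
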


{Note that if we had started with a Martin-L\"of test in the argument above, we
would have no guarantee of ending up with one, because the modulus of absolute continuity
is not computable in general.  Indeed, absolute continuity does not imply MLR reflection.
See Corollary} \ref{cor:n_notimply_mlrr}.

\begin{lemma}
\label{lem:kurtzzerojump}
If $f:[0,1]\rightarrow \mathbb R$ is computable, has bounded variation, and reflects $\emptyset'$-Kurtz
randomness, then $f$ has property $(N)$.
\begin{proof}
Suppose that $f$ does not have $(N)$.
Since $f$ has bounded variation,
it must fail absolute continuity.  Let $\varepsilon>0$ be such that for all $\delta>0$,
there is a finite union of intervals $A_\delta \subseteq[0,1]$ with $\mu(A_\delta)<\delta$
and $\mu(f(A_\delta))>\varepsilon$.
Computably, given $\delta$ we can find such $A_\delta$ by searching.
Let $A = \cap_n U_n$, where $U_n = \cup_{m>n} A_{2^{-m}}$.  Then $A$ is $\Pi^0_2$,
and $\mu(A) = 0$, and $\mu(\cap_n f(U_n)) \geq \varepsilon$.  We
claim that its subset $f(A)$ also has $\mu(f(A)) \geq \varepsilon$.
{Let $\Varf:[0,1]\rightarrow\mathbb R$ denote the cumulative
variation function of $f$, defined by setting $\Varf(x)$ to be equal
to the variation of $f$ on $[0,x]$.}
Since
$f$ has bounded variation and $U_{n+1}\subseteq U_n$,
$\sum_n \Varf(U_{n}\setminus U_{n+1})$ is finite, so by choosing $N$ large enough,
we can make $\sum_{n>N} \mu(f(U_n\setminus U_{n+1}))$ as small as we like.  Now observe
that no matter how large $N$ we choose,
$$\left(\bigcap_n f(U_n) \setminus f(A)\right) \subseteq \bigcup_{n>N} f(U_n \setminus U_{n+1}).$$
This proves the claim.  We have found a $\Pi^0_2$ set $A=\cap_n U_n$ which witnesses
the failure of $(N)$.

Observe that for any c.e. open set $U$, $\mu(f(U))$ is c.e..  Therefore, since $f$
has bounded variation,
$\emptyset'$ can search around to find, for each $n$, a closed set $F_n \subseteq U_n$
such that $\mu(f(U_n\setminus F_n)) < 2^{-n-2}\varepsilon$.  The existence of such a closed set is guaranteed by $f$ having bounded variation.

Let $F = \cap_n F_n$.  Then $F \subseteq A$ and $A \setminus F = \cup_n (A\setminus F_n)$.
So $$\mu(f(A\setminus F)) \leq \sum_n \mu(f(A \setminus F_n)) \leq \sum_n \mu(f(U_n\setminus F_n)) \leq \sum_n 2^{-n-2}\varepsilon < \varepsilon.$$
The positive measure of $f(F)$
then follows as $$\varepsilon \leq \mu(f(A)) \leq \mu(f(A\setminus F)) + \mu(f(F)).$$
Therefore, $F$ is an $\emptyset'$-computable closed set of measure
zero whose image has positive measure.  So $f$ does not reflect $\emptyset'$-Kurtz
randomness.
\end{proof}
\end{lemma}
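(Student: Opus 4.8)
The plan is to argue by contraposition: I assume $f$ is computable, has bounded variation, and fails Luzin's (N), and I produce an $\emptyset'$-computable compact null set $F$ with $\lambda(f(F))>0$; by Proposition \ref{prop:krr_implies_n} (taking $Z=\emptyset'$) this shows that $f$ does not reflect $\emptyset'$-Kurtz randomness. \textbf{Step 1 (unwind the failure).} Since $f$ is computable it is continuous, so by Fact \ref{fact:absolutevariation} together with bounded variation the failure of (N) is the failure of absolute continuity, and in particular there is a single $\varepsilon>0$ such that for every $\delta>0$ one can find a finite union of rational intervals $A_\delta\subseteq[0,1]$ with $\lambda(A_\delta)<\delta$ but $\lambda(f(A_\delta))>\varepsilon$. (A null set $E$ witnessing the failure of (N) has $\lambda^*(f(E))>0$; covering $E$ by an open $U$ with $\lambda(U)<\delta$, the analytic set $f(U)$ has measure at least $\lambda^*(f(E))$, and writing $U=\bigcup_i I_i$ and using continuity of measure along the increasing union $f(U)=\bigcup_N\bigcup_{i\le N}f(I_i)$ yields the finite subunion $A_\delta$.) Given $\delta$, such an $A_\delta$ can be found computably by searching, since $\lambda(f(\,\cdot\,))$ is computable on finite unions of rational intervals.

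\textbf{Step 2 (a $\Pi^0_2$ witness).} Set $U_n=\bigcup_{m>n}A_{2^{-m}}$ and $A=\bigcap_n U_n$. Then $A$ is $\Pi^0_2$, $\lambda(A)=0$, each $\lambda(f(U_n))>\varepsilon$, and hence $\lambda(\bigcap_n f(U_n))\ge\varepsilon$. To see that the smaller set $f(A)$ still has measure at least $\varepsilon$, introduce the cumulative variation function $\Varf$; bounded variation gives $\sum_n\Varf(U_n\setminus U_{n+1})<\infty$ (the pieces are disjoint), so $\sum_{n>N}\lambda(f(U_n\setminus U_{n+1}))$ is as small as we like for large $N$. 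From the decomposition $U_0=A\sqcup\bigsqcup_n(U_n\setminus U_{n+1})$ one gets, for every $N$, the inclusion $\bigcap_n f(U_n)\setminus f(A)\subseteq\bigcup_{n>N}f(U_n\setminus U_{n+1})$, so this difference has outer measure $0$; in particular $f(A)$ is measurable and $\lambda(f(A))=\lambda(\bigcap_n f(U_n))\ge\varepsilon$.

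\textbf{Step 3 (descend to a compact $\emptyset'$-computable set).} For a c.e.\ open set $U$ the quantity $\lambda(f(U))$ is left-c.e., uniformly in an index for $U$, because $f(U)$ is an increasing union of finite unions of intervals with computable endpoints; hence $\emptyset'$ can compute $\lambda(f(U_n\setminus F))$ for $F$ any finite union of closed rational intervals. Using $\emptyset'$, for each $n$ search for such an $F_n\subseteq U_n$ with $\lambda(f(U_n\setminus F_n))<2^{-n-2}\varepsilon$; such $F_n$ exists because $\lambda(f(U_n\setminus F))\le\Varf(U_n\setminus F)\to 0$ as $F\uparrow U_n$, again by bounded variation. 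Let $F=\bigcap_n F_n$: this is $\emptyset'$-computable and compact, $F\subseteq A$ so $\lambda(F)=0$, and $A\setminus F=\bigcup_n(A\setminus F_n)$ yields $\lambda(f(A\setminus F))\le\sum_n\lambda(f(U_n\setminus F_n))\le\varepsilon/2$. Then $\varepsilon\le\lambda(f(A))\le\lambda(f(A\setminus F))+\lambda(f(F))$ forces $\lambda(f(F))\ge\varepsilon/2>0$, and we conclude by Proposition \ref{prop:krr_implies_n}.

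The main obstacle is Step 3: making the positive measure of the image survive the passage from the $\Pi^0_2$ set $A$ to a compact $\emptyset'$-computable subset. Preimages of compact sets under continuous maps shrink predictably, but images do not — trimming $U_n$ slightly could a priori wipe out most of the image's measure — so some extra structure is essential, and bounded variation is exactly it. It does two jobs at once: it makes $\lambda(f(U))$ merely lower semicomputable, so that $\emptyset'$ (rather than a higher oracle) can locate the $F_n$, and it bounds $\lambda(f(U_n\setminus F_n))$ by the $\Varf$-measure of $U_n\setminus F_n$, which can be driven down summably fast. A minor point to flag in the writeup is that $f(A)$ need not be Borel; this is handled by working with outer measure and observing that $f(A)$ differs from the measurable set $\bigcap_n f(U_n)$ by a set of outer measure zero.
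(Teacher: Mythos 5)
Your proposal is correct and follows essentially the same route as the paper's own proof: contraposition via failure of absolute continuity, the $\Pi^0_2$ null set $A=\bigcap_n U_n$ built from the finite unions $A_{2^{-m}}$, the cumulative variation function to show $\lambda(f(A))\geq\varepsilon$ survives passing from $\bigcap_n f(U_n)$ to $f(A)$, and then the $\emptyset'$-search for closed $F_n\subseteq U_n$ with $\lambda(f(U_n\setminus F_n))<2^{-n-2}\varepsilon$ so that $F=\bigcap_n F_n$ is an $\emptyset'$-computable compact null set with $\lambda(f(F))>0$. The extra justifications you supply (existence of the finite subunions $A_\delta$, left-c.e.-ness of $\lambda(f(U))$, and the outer-measure handling of the possibly non-Borel set $f(A)$) are sound elaborations of steps the paper leaves implicit.
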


\begin{theorem}
\label{theo:boundedvariation}
The following are equivalent for computable functions $f : \uint \to \mathbb{R}$ having bounded variation:
\begin{enumerate}
\item $f$ has Luzin's (N).
\item $f$ reflects weak-2-randomness.
\item $f$ reflects $\emptyset'$-Kurtz randomness.
\item $f$ reflects $\Delta^1_1(\KO)$-randomness.
\item $f$ reflects $Z$-Kurtz randomness for any $Z \geq \emptyset'$.
\end{enumerate}
\end{theorem}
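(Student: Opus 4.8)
The plan is to prove the five-way equivalence by a cycle of implications, using the classical Fact \ref{fact:absolutevariation} to reduce to absolute continuity wherever convenient. First I would establish $(1) \Rightarrow (4)$: this is immediate from Theorem \ref{theo:main}, since a function with Luzin's (N) reflects $\Delta^1_1(\KO)$-randomness regardless of bounded variation. Next, $(4) \Rightarrow (5)$: a $\Delta^1_1(\KO)$-random point is in particular $Z$-Kurtz random for any $Z \geq_T \KO$ and more to the point, a $\Delta^1_1(\KO)$-random is $\Pi^1_1$-random, and one checks that reflecting $\Delta^1_1(\KO)$-randomness forces reflection of $Z$-Kurtz randomness for all hyperarithmetic $Z$ — but actually the cleanest route is $(1) \Rightarrow (5)$ directly via Proposition \ref{prop:allkurtzreflection}, which already gives Kurtz randomness reflection relative to every oracle. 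So I would instead arrange the cycle as $(1) \Rightarrow (5) \Rightarrow (3) \Rightarrow (1)$ and $(1) \Rightarrow (4) \Rightarrow (2) \Rightarrow (1)$, or better, collapse everything into one cycle $(1) \Rightarrow (4) \Rightarrow (2) \Rightarrow (1)$ together with $(1) \Rightarrow (5) \Rightarrow (3) \Rightarrow (1)$, noting $(5) \Rightarrow (3)$ is the trivial instance $Z = \emptyset'$.

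The substantive implications are the following. For $(1) \Rightarrow (2)$: if $f$ has Luzin's (N) and bounded variation, then by Fact \ref{fact:absolutevariation} it is absolutely continuous, and Proposition \ref{prop:weak2random} gives weak-2-randomness reflection. For $(1) \Rightarrow (4)$ and $(1) \Rightarrow (5)$, as noted, apply Theorem \ref{theo:main} and Proposition \ref{prop:allkurtzreflection} respectively. The implication $(3) \Rightarrow (1)$ is exactly Lemma \ref{lem:kurtzzerojump}. It remains to close the cycle from $(2)$ and from $(4)$. For $(2) \Rightarrow (1)$: suppose $f$ has bounded variation but fails Luzin's (N). Then as in the proof of Lemma \ref{lem:kurtzzerojump}, $f$ fails absolute continuity, and one builds a $\Pi^0_2$ null set $A = \bigcap_n U_n$ with $\lambda(f(A)) \geq \varepsilon > 0$; since $f(A)$ is the image of a $\Pi^0_2$ null set and has positive measure, it contains a weak-2-random point $y$, while $A$ itself contains no weak-2-random point, so any $x \in f^{-1}(y) \cap A$ witnesses the failure of weak-2-randomness reflection. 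For $(4) \Rightarrow (1)$ one can simply cite Theorem \ref{theo:main} in reverse — condition $(2)$ of that theorem is reflecting $\Delta^1_1(\KO)$-randomness, which is there shown equivalent to Luzin's (N) with no bounded-variation hypothesis — so $(4) \Leftrightarrow (1)$ is free, and we need not route through $(2)$ at all.

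So the genuinely new content of the theorem beyond Theorem \ref{theo:main} is the equivalence with $(2)$, $(3)$, and $(5)$, which for bounded variation functions are pinned down by Proposition \ref{prop:weak2random} and Lemma \ref{lem:kurtzzerojump}, plus the observation that for such functions the $\Pi^0_2$ witness to a failure of Luzin's (N) can be taken so that its image's positive measure is detectable by a weak-2 test and its closed-set-refinement's image still has positive measure (again Lemma \ref{lem:kurtzzerojump}). I would present the proof as: $(1) \Leftrightarrow (4)$ by Theorem \ref{theo:main}; $(1) \Rightarrow (5) \Rightarrow (3)$ by Proposition \ref{prop:allkurtzreflection} and specialization; $(3) \Rightarrow (1)$ by Lemma \ref{lem:kurtzzerojump}; $(1) \Rightarrow (2)$ by Fact \ref{fact:absolutevariation} and Proposition \ref{prop:weak2random}; and $(2) \Rightarrow (1)$ by the contrapositive argument above using the $\Pi^0_2$ null set construction from the proof of Lemma \ref{lem:kurtzzerojump}. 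The main obstacle is purely organizational: making sure the argument for $(2) \Rightarrow (1)$ does not secretly require the $\emptyset'$-effectivity that Lemma \ref{lem:kurtzzerojump} extracts, since for weak-2-randomness we only need a (lightface) $\Pi^0_2$ witness with image of positive measure, which is the easier half of that lemma's proof and requires no oracle search for closed subsets.
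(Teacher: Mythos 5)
Your proposal is correct and follows essentially the same route as the paper: $(1)\Leftrightarrow(4)$ from Theorem \ref{theo:main}, $(1)\Rightarrow(2)$ via absolute continuity (Fact \ref{fact:absolutevariation}) and Proposition \ref{prop:weak2random}, $(1)\Rightarrow(5)\Rightarrow(3)$ via Proposition \ref{prop:allkurtzreflection}, and $(3)\Rightarrow(1)$ from Lemma \ref{lem:kurtzzerojump}. The only organizational difference is that the paper closes the loop by proving $(2)\Rightarrow(3)$ (weak-2-reflection forces images of null $\Pi^0_2$ sets to be null, so $\emptyset'$-Kurtz tests map to $\emptyset'$-Kurtz tests by compactness), whereas you go $(2)\Rightarrow(1)$ directly using the lightface $\Pi^0_2$ witness built in the first half of Lemma \ref{lem:kurtzzerojump}; both arguments rest on the same observation and are equally valid.
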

\begin{proof}
The implication from $(1)$ to $(2)$ is given by Proposition \ref{prop:weak2random}.
{To see that $(2)$ implies $(3)$, first observe that
 weak-2-randomness reflection implies that
$\mu(f(A)) = 0$ for any null $\Pi^0_2$ set $A$, for if $f(A)$ had positive
measure then it would certainly contain weak-2-random elements.  A
$\Pi^0_1(\emptyset')$ set is in particular $\Pi^0_2$, so the image of any $\emptyset'$-Kurtz
test has measure 0, and is thus also an $\emptyset'$-Kurtz test because the
continuous image of a compact set is uniformly compact.}
The implication $(3) \Rightarrow (1)$ is in Lemma \ref{lem:kurtzzerojump}.

{Finally, the equivalence of (1) and (4) is just Theorem }
\ref{theo:main},{ the implication from (1) to (5) is
Proposition }\ref{prop:allkurtzreflection}, {and the implication from
(5) to (3) is Proposition} \ref{prop:krr_implication}.
\end{proof}

\begin{corollary}
If a computable function $f : \uint \to \mathbb{R}$ of bounded variation reflects ML-randomness, then it has Luzin's (N).
\end{corollary}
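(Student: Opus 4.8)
The plan is to prove the contrapositive: if a computable $f : \uint \to \mathbb{R}$ of bounded variation fails Luzin's (N), then it does not reflect ML-randomness. The key observation is that the construction already carried out in the proof of Lemma~\ref{lem:kurtzzerojump} hands us, with no additional work, a genuine Martin-L\"of test; the passage to an $\emptyset'$-computable closed subset that is needed there for the $\emptyset'$-Kurtz conclusion is simply unnecessary here.

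First I would recall that, since $f$ has bounded variation and is continuous (being computable on $\uint$), failure of Luzin's (N) forces failure of absolute continuity by Fact~\ref{fact:absolutevariation}. Exactly as in the proof of Lemma~\ref{lem:kurtzzerojump}, fix $\varepsilon > 0$ such that for every $\delta > 0$ one can \emph{computably} find a finite union of open intervals $A_\delta \subseteq \uint$ with $\lambda(A_\delta) < \delta$ and $\lambda(f(A_\delta)) > \varepsilon$. Put $U_n = \bigcup_{m > n} A_{2^{-m}}$. Then $(U_n)_{n \in \mathbb{N}}$ is a uniformly c.e.\ decreasing sequence of open sets with $\lambda(U_n) < \sum_{m > n} 2^{-m} = 2^{-n}$, i.e.\ it is a Martin-L\"of test, and $A := \bigcap_n U_n$ consists entirely of reals that are not ML-random.

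Next I would reproduce verbatim the measure estimate from the proof of Lemma~\ref{lem:kurtzzerojump}: each $U_n$ contains some $A_{2^{-m}}$, so $\lambda(f(U_n)) \geq \varepsilon$ for all $n$; bounded variation makes $\sum_n \Varf(U_n \setminus U_{n+1})$ finite, hence $\sum_{n > N} \lambda(f(U_n \setminus U_{n+1})) \to 0$; and since $\bigl(\bigcap_n f(U_n)\bigr) \setminus f(A) \subseteq \bigcup_{n > N} f(U_n \setminus U_{n+1})$ for every $N$, we conclude $\lambda^*(f(A)) \geq \varepsilon > 0$. As the non-ML-random reals form a null set, $f(A)$, having positive outer measure, must contain some ML-random $y$; picking any $x \in A$ with $f(x) = y$ exhibits a non-ML-random point with ML-random image, so $f$ does not reflect ML-randomness.

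The argument is essentially bookkeeping on top of Lemma~\ref{lem:kurtzzerojump}, so there is no real obstacle; the two points needing care are (i) that the $A_\delta$ can be chosen as finite unions of rational open intervals found uniformly computably in $\delta$ -- this is exactly the ``search for $A_\delta$'' step already present in that proof -- so that $(U_n)$ is honestly a Martin-L\"of test rather than merely a null $\mathbf\Pi^0_2$ set (the latter would only yield failure of weak-$2$-randomness reflection), and (ii) that bounded variation is precisely what upgrades $\lambda(f(U_n)) \geq \varepsilon$ to $\lambda^*(f(A)) \geq \varepsilon$, the step that genuinely fails for general continuous $f$ (cf.\ Corollary~\ref{corr:separating}).
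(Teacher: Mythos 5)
Your proof is correct and is essentially the paper's argument: the paper disposes of this corollary by pointing to the reasoning for $(2)\Rightarrow(3)$ in Theorem \ref{theo:boundedvariation} (reflection forces images of the relevant test sets to be null) combined with the construction in Lemma \ref{lem:kurtzzerojump}, and your observation that the sets $U_n=\bigcup_{m>n}A_{2^{-m}}$ there already constitute a Martin-L\"of test is precisely the content being invoked. The only difference is presentational: you argue the contrapositive explicitly, while the paper leaves the unpacking to the reader.
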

{The converse is false; see Corollary} \ref{cor:n_notimply_mlrr}.
\begin{proof}
The same argument works as for the implication {$(2) \Rightarrow (3)$} in Theorem \ref{theo:boundedvariation}.
\end{proof}

{In this section we have stated all results for $f:\uint \rightarrow \mathbb R$
because this is a natural setting in which to consider functions of bounded variation.
Of course, our pointwise
results are equally true for any computable $f:\mathbb R \rightarrow \mathbb R$
which is locally of bounded variation.}

An often useful result about continuous functions of bounded variation is that they can be obtained as difference between two strictly increasing continuous functions. In light of our investigation of Luzin's (N) for strictly increasing functions, one could wonder why we are not exploiting this property here. There are two obstacles: One the one hand, the computable counterpart of the decomposition result is false: There is a computable function of bounded variation, which cannot be written as the difference between any two strictly increasing computable functions \cite{rettinger3}. On the other hand, Luzin's (N) is very badly behaved for sums. For example, for every continuous function $f$ having Luzin's (N) there exists another continuous function $g$ having Luzin's (N) such that $f + g$ fails (N) \cite{pokorny}.

\section{The relationship to absolute continuity of measures}
\label{sec:measures}
For increasing functions we see a connection to absolute continuity of measures. Recall that a measure $\mu$ is absolutely continuous w.r.t.~a measure $\nu$ (in symbols $\mu \ll \nu$), if $\nu(A) = 0$ implies that $\mu(A) = 0$. The notions are related through the following observations:

\begin{observation}
If continuous {surjective} $f : \uint \to \uint$ is increasing, then the {probability}
measure $\mu$ defined as
{$\mu(A) = \lambda(f(A))$} is non-atomic, and {$\mu \ll \lambda$} iff $f$ has Luzin's (N).
\end{observation}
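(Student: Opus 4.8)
The plan is to verify three separate assertions about the set function $\mu(A) = \lambda(f(A))$: that it is a probability measure, that it is non-atomic, and that $\mu \ll \lambda$ is equivalent to Luzin's (N). Since $f$ is continuous, increasing, and surjective, it is in fact a nondecreasing bijection up to the usual caveat about flat pieces — but surjectivity together with monotonicity actually forces $f$ to be strictly increasing (a flat piece would make $f$ constant on an interval, and then $f$ would miss values only if\ldots no: a flat piece is compatible with surjectivity, e.g.\ a Cantor-like staircase is not surjective but $f$ could be strictly increasing on the complement of a flat piece and still be surjective). So I would first note $f(0)=0$, $f(1)=1$, and that for $a<b$ the image $f([a,b]) = [f(a),f(b)]$ is an interval by the intermediate value theorem; hence $\mu([a,b]) = f(b)-f(a)$. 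This shows $\mu$ agrees on intervals with the Lebesgue–Stieltjes measure of the distribution function $f$, so $\mu$ extends to a genuine Borel probability measure with $\mu(\uint) = f(1)-f(0) = 1$; one should remark that $\mu(A) := \lambda(f(A))$ is well-defined and countably additive for Borel $A$ precisely because $f$, being monotone, maps disjoint sets to essentially disjoint sets (overlaps are at most countably many points, which are $\lambda$-null).

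Next, non-atomicity: for any single point $y_0 \in \uint$, its preimage $f^{-1}(y_0)$ is a closed interval (possibly degenerate) by monotonicity and continuity, and $\mu(\{x_0\}) = \lambda(f(\{x_0\})) = \lambda(\{f(x_0)\}) = 0$ for any singleton $\{x_0\}$; more to the point, to say $\mu$ is non-atomic we need $\mu(\{x\})=0$ for all $x$, which is immediate since $f(\{x\})$ is a single point. (If instead "non-atomic" is meant in the sense that $\mu$ has no atoms as a measure on $\uint$, the same one-line computation suffices.)

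The substantive equivalence is $\mu \ll \lambda \iff f$ has Luzin's (N). This is essentially definitional once the measure is set up: $f$ has Luzin's (N) means $\lambda(A)=0 \Rightarrow \lambda(f(A))=0$, i.e.\ $\lambda(A)=0 \Rightarrow \mu(A)=0$, which is exactly the definition of $\mu \ll \lambda$. The only thing to be careful about is measurability — one must check that $f(A)$ is Lebesgue measurable whenever $A$ is, so that $\lambda(f(A))$ makes sense in the definition of $\mu$; for $A$ Borel and $f$ continuous and injective (or countable-to-one), $f(A)$ is analytic hence universally measurable, and for general Lebesgue-measurable $A$ one writes $A = E \cup N$ with $E$ Borel (an $F_\sigma$) and $N$ null, so $f(A) = f(E) \cup f(N)$; $f(E)$ is a countable union of compact sets hence Borel-measurable, and $f(N)$ is null exactly when Luzin's (N) holds — so in the direction where we assume Luzin's (N), measurability of $f(A)$ comes for free, and in the converse direction one only needs the implication on Borel (indeed $F_\sigma$) null sets, which is enough since every null set sits inside a null $G_\delta$.

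The main obstacle I expect is purely a matter of bookkeeping around measurability and well-definedness rather than any real mathematical difficulty: making sure that $A \mapsto \lambda(f(A))$ is genuinely a countably additive Borel measure (using that $f$ monotone implies images of disjoint Borel sets overlap in at most countably many points), and being precise about the class of sets $A$ on which Luzin's (N) needs to be tested (it suffices to test it on $G_\delta$ null sets, equivalently on $\Pi^0_2$ null sets, by the standard reduction that every null set is contained in one). Once these points are handled, the three claims each collapse to a one- or two-line verification.
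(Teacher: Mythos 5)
Your verification is correct and supplies exactly the bookkeeping the paper leaves implicit (the statement appears as an unproved Observation): well-definedness and countable additivity of $\mu$ via the fact that images of disjoint sets under a monotone map overlap in at most countably many points, non-atomicity from $\mu(\{x\})=\lambda(\{f(x)\})=0$, and the definitional equivalence of $\mu \ll \lambda$ with Luzin's (N) reduced to Borel null sets by taking $G_\delta$ hulls. One correction: strike the aside claiming that surjectivity plus monotonicity forces $f$ to be strictly increasing --- a continuous nondecreasing surjection of $[0,1]$ can perfectly well be constant on a subinterval --- but, as you half-note and as your countable-overlap argument already shows, flat pieces are harmless to every step of the proof.
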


\begin{observation}
\label{obs:cumu}
If $\mu$ is a non-atomic measure on $\uint$, then {its cumulative distribution
function $\cdf_\mu(x) :=\mu([0,x])$} is a continuous increasing function which
has Luzin's (N) iff $\mu \ll \lambda$.
\end{observation}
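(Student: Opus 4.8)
The plan is to reduce Observation \ref{obs:cumu} to the preceding observation (the one relating increasing surjective $f:\uint\to\uint$ to absolute continuity of the measure $A\mapsto\lambda(f(A))$), by showing that the measure associated to $g:=\cdf_\mu$ in that observation is exactly $\mu$ itself. Throughout I would assume without loss of generality that $\mu$ is a probability measure, since rescaling affects neither $\mu \ll \lambda$ nor Luzin's (N).

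First I would dispatch the easy structural facts about $g$. Monotonicity is immediate from $[0,x]\subseteq[0,y]$ for $x\le y$. Continuity follows from non-atomicity: for $h>0$ we have $g(x+h)-g(x)=\mu((x,x+h])$ and $g(x)-g(x-h)=\mu((x-h,x])$, and these tend to $\mu(\emptyset)=0$ and $\mu(\{x\})=0$ respectively by continuity of the (finite) measure $\mu$ from above. Since $g(0)=0$ and $g(1)=1$, continuity together with monotonicity makes $g:\uint\to\uint$ surjective, so $g$ is exactly the kind of function to which the preceding observation applies, yielding: the probability measure $\nu(A):=\lambda(g(A))$ satisfies $\nu\ll\lambda$ if and only if $g$ has Luzin's (N).

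The heart of the argument is then the identity $\lambda(g(A))=\mu(A)$ for every Borel $A\subseteq\uint$, i.e.\ $\nu=\mu$. For an open interval $(a,b)$, continuity of $g$ gives that $g((a,b))$ is an interval with infimum $g(a)$ and supremum $g(b)$, so $\lambda(g((a,b)))=g(b)-g(a)=\mu((a,b])=\mu((a,b))$, the last step by non-atomicity. The open intervals of $\uint$ form a $\pi$-system generating the Borel $\sigma$-algebra, and both $\nu$ (a probability measure, by the preceding observation) and $\mu$ are probability measures agreeing on this $\pi$-system; hence they agree on all Borel sets by Dynkin's $\pi$--$\lambda$ theorem. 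Plugging $\nu=\mu$ into the conclusion of the preceding observation gives precisely that $\mu\ll\lambda$ iff $g$ has Luzin's (N).

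I expect the only delicate point to be the bookkeeping behind $\lambda(g(A))=\mu(A)$: one must be mildly careful that $g$ is typically not injective — it is constant on each maximal plateau — so that images of disjoint Borel sets can overlap, though only in the countably many $g$-values of plateau endpoints, hence on a $\lambda$-null set. This is exactly what is already folded into the preceding observation's assertion that $A\mapsto\lambda(g(A))$ is a measure, so invoking that observation lets us avoid re-deriving it. (Alternatively, one could prove the two directions directly: if $\mu\ll\lambda$, the $\varepsilon$--$\delta$ form of absolute continuity for finite measures gives, for a null $A$ and suitable open $U\supseteq A$, that $\lambda(g(A))\le\lambda(g(U))=\mu(U)<\varepsilon$; conversely $g$ pushes $\mu$ forward to $\lambda$, so $\lambda(g(E))\ge\mu(g^{-1}(g(E)))\ge\mu(E)$, whence any $E$ witnessing $\mu\not\ll\lambda$ already witnesses the failure of Luzin's (N).)
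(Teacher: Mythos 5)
The paper states this as an Observation with no accompanying proof, so there is no argument of the authors' to compare against; your write-up is a correct way of supplying the missing details. The reduction to the preceding observation via the identity $\lambda(\cdf_\mu(A))=\mu(A)$ is sound, including your handling of the plateaus of $\cdf_\mu$ (the only place injectivity fails, and its values form a countable, hence $\lambda$-null, set); the sole microscopic slip is that open intervals of $\uint$ generate the Borel sets only up to the endpoints $\{0\}$ and $\{1\}$, which is immaterial here since both measures are non-atomic.
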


In \cite{merkle3}, {Bienvenu and Merkle have done an extensive survey
of the conditions under which two computable measures $\mu$ and $\nu$
share the same randoms for a variety of notions of randomness
(Kurtz, computable, Schnorr, MLR, and weak-2-random).
Two trivial situations where $\mu$-randomness and $\lambda$-randomness
fail to coincide is if $\mu$ has an
atom or if $\mu(J)=0$ for some open interval $J$.
When discussing the connections among Luzin's (N), randomness reflection,
and coincidence of randomness notions, we will restrict our attention to computable
measures $\mu$ which avoid these two degenerate situations.  When $\mu$
is atomless, $\cdf_\mu$ is continuous and computable.  To say $\mu(J)>0$ for all
open intervals $J$, it is equivalent to say that
$\cdf_\mu$ is strictly increasing.  When the degenerate situations
are avoided,
$\cdf_\mu$ is a computable homeomorphism of $\uint$, so
$\cdf^{-1}_\mu$ is also a computable homeomorphism.  In this situation,
randomness reflection for $\cdf_\mu$ is exactly randomness
preservation for $\cdf^{-1}_\mu$.  }

\begin{proposition}\label{prop:reflection}
Let $\mu$ be a non-atomic computable probability measure on $\uint$ {with
$\cdf_\mu$ strictly increasing.} Then $x$ is $\mu$-MLR ($\mu$-Schnorr random, $\mu$-Kurtz random, $\mu$-$\Delta^1_1$-random) iff $\cdf_\mu(x)$ is Martin-L\"of random (Schnorr random, Kurtz random, $\Delta^1_1$-random) w.r.t.~the Lebesgue measure.
\begin{proof}
{For any set $A$, we have $\mu(A) = \lambda(\cdf_\mu(A))$, and $\cdf_\mu$
and $\cdf_\mu^{-1}$ are both computable homeomorphisms. }
We can thus move any relevant test from domain to codomain and vice versa.
\end{proof}
\end{proposition}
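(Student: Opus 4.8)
The plan is to reduce everything to the single structural fact recorded in the paragraph preceding the statement: under the hypotheses ($\mu$ atomless and computable, $\cdf_\mu$ strictly increasing), the map $g := \cdf_\mu : \uint \to \uint$ is a computable homeomorphism whose inverse $g^{-1}$ is also computable, and $\mu(A) = \lambda(g(A))$ for every Borel set $A$ (so $g$ pushes $\mu$ forward to $\lambda$). Given this, each of the four equivalences is proved by a transport-of-structure argument: all the randomness notions in play are of the shape ``$x$ is random iff $x$ lies outside $\bigcap_n C_n$ for every uniformly presented sequence $(C_n)$ of effectively open (resp.\ effectively closed, resp.\ $\Delta^1_1$) sets of controlled $\mu$-measure'', and $g$ and $g^{-1}$ move such presentations between the two measures while preserving both effectivity and measure exactly.

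Concretely, first I would set up two transport lemmas. Since $g$ is an increasing computable homeomorphism, a rational open interval $(a,b)$ is sent by $g$ to the interval $(g(a),g(b))$ with computable endpoints; hence from a c.e.\ index for an open set $U = \bigcup_i I_i$ one computes a c.e.\ index for $g(U) = \bigcup_i g(I_i)$, and symmetrically for $g^{-1}$, with $\lambda(g(U)) = \mu(U)$ and $\mu(g^{-1}(V)) = \lambda(V)$. Because $\uint$ is compact and $g$ is a computable homeomorphism, $g$ and $g^{-1}$ likewise send (uniformly) effectively closed sets to effectively closed sets, and $\Delta^1_1$ sets to $\Delta^1_1$ sets, again preserving measure. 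In all cases $x \in C$ iff $g(x) \in g(C)$.

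With these lemmas the four cases are essentially identical. For Martin-L\"of randomness: if $(U_n)_n$ is a $\mu$-ML test then $(g(U_n))_n$ is a $\lambda$-ML test and $g(x) \notin \bigcap_n g(U_n) \iff x \notin \bigcap_n U_n$; combining this with the symmetric direction through $g^{-1}$ yields the equivalence. For Schnorr randomness one additionally uses $\lambda(g(U_n)) = \mu(U_n)$, so uniform computability of the measures of the test components transfers. For Kurtz randomness one transports $\mu$-null $\Pi^0_1$ classes (equivalently, Kurtz null tests) to $\lambda$-null $\Pi^0_1$ classes via the closed-set transport lemma, and back. For $\Delta^1_1$-randomness one transports $\mu$-$\Delta^1_1$-null sets to $\lambda$-$\Delta^1_1$-null sets, using that $g$ and $g^{-1}$ preserve the class $\Delta^1_1$.

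There is no genuine difficulty here; the only points to watch are that strict monotonicity of $\cdf_\mu$ is exactly what makes $g^{-1}$ exist and be computable, so that the transport runs in both directions, and that the measure is preserved exactly (not merely up to a multiplicative constant), which is what keeps Schnorr tests and Kurtz tests of the correct type — in contrast to the coarser estimates used elsewhere in the paper, where only ML-type conclusions survive.
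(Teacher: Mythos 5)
Your proposal is correct and follows exactly the paper's argument: the paper's proof is the one-line observation that $\cdf_\mu$ and $\cdf_\mu^{-1}$ are computable homeomorphisms with $\mu(A)=\lambda(\cdf_\mu(A))$, so tests transport in both directions; you have simply written out the transport lemmas and the four cases that the paper leaves implicit. No gaps.
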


{Therefore, $\cdf_\mu$ reflects a given notion of
randomness exactly when the
$\mu$-randoms are contained in the $\lambda$-randoms for that notion of randomness.
Similarly, $\cdf_\mu^{-1}$ reflects a given notion of randomness exactly when
the $\lambda$-randoms are contained in the $\mu$-randoms.}

{Using our previous results, we obtain the following corollary.
The equivalence of (1) and (4) was proved in}
(\cite[Proposition 58]{merkle3}), but the others are new.
\begin{corollary}\label{corr:measure_equivalent}
{The following are equivalent for a computable probability measure $\mu$.}
\begin{enumerate}
\item $\mu$ is mutually absolutely continuous with the Lebesgue measure.
\item $\cdf_\mu$ is a homeomorphism and both $\cdf_\mu$ and $\cdf_\mu^{-1}$
have Luzin's (N).
\item $\mu$-$\Delta^1_1(\KO)$-randomness and $\Delta^1_1(\KO)$-randomness coincide.
\item $\mu$-weak-2-randomness and weak-2-randomness coincide.
\item $\mu$-Kurtz($\emptyset'$)-randomness and Kurtz($\emptyset'$)-randomness coincide.
\end{enumerate}
\end{corollary}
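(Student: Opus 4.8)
The plan is to reduce everything to the behaviour of the cumulative distribution function $\cdf_\mu$ and its inverse, and then to invoke Observation~\ref{obs:cumu}, Proposition~\ref{prop:reflection}, and Theorem~\ref{theo:boundedvariation}. First I would dispose of the degenerate cases. If $\mu$ has an atom or $\mu(J)=0$ for some open interval $J$, then (as recorded in the remarks preceding Proposition~\ref{prop:reflection}) $\mu$-randomness and $\lambda$-randomness fail to coincide for each of the notions appearing in (3)--(5); condition (1) fails because one of the two absolute-continuity inclusions fails; and condition (2) fails because in these situations $\cdf_\mu$ is either discontinuous or non-injective, hence not a homeomorphism. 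So in the degenerate case all five conditions are false and the equivalence holds vacuously. Hence I may assume $\mu$ is atomless with $\cdf_\mu$ strictly increasing, in which case $\cdf_\mu:\uint\to\uint$ is a computable increasing homeomorphism whose inverse $\cdf_\mu^{-1}$ is again a computable increasing homeomorphism; in particular both $\cdf_\mu$ and $\cdf_\mu^{-1}$ are computable functions of bounded variation, so Theorem~\ref{theo:boundedvariation} applies to each of them.

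Next I would establish $(1)\Leftrightarrow(2)$. From the change-of-variables identity $\mu(A)=\lambda(\cdf_\mu(A))$ (used in the proof of Proposition~\ref{prop:reflection}) one reads off directly that $\cdf_\mu$ has Luzin's (N) iff $\lambda(A)=0$ implies $\mu(A)=0$ for all $A$, i.e.\ iff $\mu\ll\lambda$; substituting $A=\cdf_\mu(E)$ in the same identity gives that $\cdf_\mu^{-1}$ has Luzin's (N) iff $\mu(E)=0$ implies $\lambda(E)=0$ for all $E$, i.e.\ iff $\lambda\ll\mu$. Since in the non-degenerate case the homeomorphism clause of (2) is automatic, condition (2) reduces to ``$\cdf_\mu$ and $\cdf_\mu^{-1}$ both have Luzin's (N)'', which by the above is exactly mutual absolute continuity of $\mu$ and $\lambda$, i.e.\ condition (1).

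For $(2)\Leftrightarrow(3)\Leftrightarrow(4)\Leftrightarrow(5)$ I would first note, via the remarks following Proposition~\ref{prop:reflection}, that for any notion of randomness $R$ the map $\cdf_\mu$ reflects $R$-randomness exactly when the $\mu$-$R$-randoms are contained in the $\lambda$-$R$-randoms, and $\cdf_\mu^{-1}$ reflects $R$-randomness exactly when the reverse containment holds; consequently ``$\mu$-$R$-randomness and $\lambda$-$R$-randomness coincide'' is equivalent to ``$\cdf_\mu$ and $\cdf_\mu^{-1}$ both reflect $R$-randomness''. Applying this with $R$ equal to $\Delta^1_1(\KO)$-randomness, weak-2-randomness, and $\emptyset'$-Kurtz randomness rewrites (3), (4), (5) respectively as statements of this form. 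Now Theorem~\ref{theo:boundedvariation}, applied separately to the bounded-variation computable functions $\cdf_\mu$ and $\cdf_\mu^{-1}$, says that for each of them, having Luzin's (N), reflecting weak-2-randomness, reflecting $\emptyset'$-Kurtz randomness, and reflecting $\Delta^1_1(\KO)$-randomness are all equivalent; hence the four rewritten forms of (2)--(5) coincide, closing the chain.

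The steps are routine once these results are in hand; the only places that demand care are (i) keeping the directions of the two inclusions straight when translating reflection of $\cdf_\mu^{-1}$ (whose domain carries $\lambda$ and whose codomain carries $\mu$) into a containment between randomness classes, and (ii) checking that the test-transfer argument of Proposition~\ref{prop:reflection} genuinely covers weak-2-, $\emptyset'$-Kurtz-, and $\Delta^1_1(\KO)$-randomness and not merely the four notions named there --- but since $\cdf_\mu$ and $\cdf_\mu^{-1}$ are computable homeomorphisms they preserve the effective complexity of any such test and the change-of-variables identity makes the two measures correspond, so this is immediate. I do not expect a substantive obstacle: the mathematical content is imported from Theorem~\ref{theo:boundedvariation} (and ultimately Theorem~\ref{theo:main}) together with Proposition~\ref{prop:reflection}.
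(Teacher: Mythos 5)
Your proposal is correct and follows essentially the same route as the paper: dispose of the degenerate (atomic or interval-null) cases, get $(1)\Leftrightarrow(2)$ from the change-of-variables identity behind Observation~\ref{obs:cumu}, translate coincidence of randomness notions into reflection for both $\cdf_\mu$ and $\cdf_\mu^{-1}$ via Proposition~\ref{prop:reflection}, and close the chain with Theorem~\ref{theo:boundedvariation} applied to these two bounded-variation computable homeomorphisms. Your explicit check that the test-transfer covers the relativized notions in (3)--(5) is a point the paper leaves implicit, but the argument is the same.
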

\begin{proof}
{First observe that in all cases above, $\cdf_\mu$ is a homeomorphism.  That is because
none of the cases is compatible with $\mu$ having an atom or assigning measure 0 to an interval.

 Then $(1) \iff (2)$ follows from Observation} \ref{obs:cumu} {for the case of $\cdf_\mu$,
 and by similar reasoning for the case of $\cdf_\mu^{-1}$.

 Since $\cdf_\mu$ and $\cdf_\mu^{-1}$ are computable functions of bounded variation,
 by Theorems }\ref{theo:main}, and \ref{theo:boundedvariation},
 {they have Luzin's (N) if and only if they
 reflect each kind of randomness mentioned in $(3)$-$(6)$.  So the implications
 $(2)\iff(3), (2)\iff(4),$ and $(2)\iff(5)$ now
 follow from Proposition} \ref{prop:reflection}.
\end{proof}

Bienvenu and Merkle also give some separations.
In particular, they show as \cite[Proposition 51 a)]{merkle3} that there exists a computable probability measure $\mu$ which is mutually absolutely continuous with Lebesgue measure, but $\mu$-MLR does not coincide with $\lambda$-MLR, $\mu$-Schnorr random does not coincide with with $\lambda$-Schnorr random, and $\mu$-computably random does not coincide with $\lambda$-computably random. Essentially, $\mu$ is obtained by thinning out the Lebesgue measure around Chaitin's $\Omega$ in a way that derandomizes $\Omega$ without introducing new null sets.

\begin{corollary}\label{cor:n_notimply_mlrr}
Luzin's (N) does not imply any of Martin-L\"of randomness reflection, Schnorr randomness reflection nor computable-randomness reflection; even for strictly increasing computable functions.
\end{corollary}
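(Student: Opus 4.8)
The plan is to extract the required counterexamples from the computable probability measure $\mu$ of Bienvenu and Merkle described just above (\cite[Proposition 51 a)]{merkle3}): this $\mu$ is mutually absolutely continuous with $\lambda$, yet the $\mu$-randoms differ from the $\lambda$-randoms for each of Martin-L\"of, Schnorr and computable randomness. Because mutual absolute continuity with $\lambda$ rules out both atoms and intervals of $\mu$-measure $0$, the function $\cdf_\mu$ is a strictly increasing computable homeomorphism of $\uint$, and hence so is $\cdf_\mu^{-1}$. By the implication $(1)\Rightarrow(2)$ of Corollary \ref{corr:measure_equivalent}, both $\cdf_\mu$ and $\cdf_\mu^{-1}$ have Luzin's (N).

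Next I would spell out how reflection of a randomness notion by these two functions translates into an inclusion between randomness classes. Exactly as in the proof of Proposition \ref{prop:reflection}, the computable homeomorphisms $\cdf_\mu$ and $\cdf_\mu^{-1}$ transport the relevant tests -- or martingales, in the case of computable randomness -- in both directions, so that $x$ is $\mu$-$R$-random if and only if $\cdf_\mu(x)$ is $R$-random with respect to $\lambda$, for $R$ any of the three notions. Unwinding the definition of reflection, $\cdf_\mu$ reflects $R$-randomness precisely when every $\mu$-$R$-random is $\lambda$-$R$-random, while $\cdf_\mu^{-1}$ reflects $R$-randomness precisely when every $\lambda$-$R$-random is $\mu$-$R$-random.

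To conclude, fix $R$ among the three notions. Since the $\mu$-$R$-randoms and the $\lambda$-$R$-randoms are not the same class, at least one of these two inclusions fails, so at least one of $\cdf_\mu$, $\cdf_\mu^{-1}$ fails to reflect $R$-randomness; as both functions are strictly increasing, computable, and have Luzin's (N), this gives the desired counterexample. In fact the same function works for all three $R$ at once: Bienvenu and Merkle's $\mu$ derandomizes Chaitin's $\Omega$ while introducing no new null sets, and $\Omega$ is random with respect to $\lambda$ in all three senses but not with respect to $\mu$, so it is always the implication ``$\lambda$-$R$-random $\Rightarrow$ $\mu$-$R$-random'' that fails, and $\cdf_\mu^{-1}$ is the witness in every case.

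The only real subtlety is bookkeeping with directions: one has to make sure the failure of coincidence is attributed to the correct one of $\cdf_\mu$, $\cdf_\mu^{-1}$, and that the transfer principle behind Proposition \ref{prop:reflection} is invoked for computable randomness, which is martingale-based rather than test-based. Both points are routine given that $\cdf_\mu$ and $\cdf_\mu^{-1}$ are computable homeomorphisms, so I do not expect a serious obstacle here; the whole corollary is essentially a translation of the Bienvenu--Merkle separation through Corollary \ref{corr:measure_equivalent} and Proposition \ref{prop:reflection}.
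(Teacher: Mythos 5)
Your proposal is correct and follows essentially the same route as the paper: the paper's own (very terse) proof argues by contradiction that if Luzin's (N) implied reflection of any of these notions, that notion could be added to the list in Corollary \ref{corr:measure_equivalent}, contradicting the Bienvenu--Merkle measure; you simply unwind this into an explicit counterexample using the same ingredients (the Bienvenu--Merkle $\mu$, Corollary \ref{corr:measure_equivalent}(1)$\Rightarrow$(2), and the transfer principle of Proposition \ref{prop:reflection}). Your extra observations --- that the transfer must also be checked for martingale-based computable randomness and that $\cdf_\mu^{-1}$ is the witness in all three cases --- are accurate refinements rather than deviations.
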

\begin{proof}
{If Luzin's (N) were to imply reflection for any of these kinds of randomness,
they could be included
in the list in Corollary} \ref{corr:measure_equivalent} {by the same reasoning,
but this would contradict
Bienvenu and Merkle's result above.}
\end{proof}

We still need to discuss reflection of (unrelativized) Kurtz randomness. In \cite[Proposition 56]{merkle3}, Bienvenu and Merkle construct a non-atomic computable probability measure $\mu$ such that $\mu$-Kurtz random and Kurtz random coincide, yet makes the Lebesgue measure not absolutely continuous relative to $\mu$. The construction is based on an involved characterization of $2$-randomness in terms of Kolmogorov complexity obtained by Nies, Stephan and Terwijn \cite{NST}. We could already conclude {that Kurtz randomness
reflection does not imply Luzin's (N)} from this, but instead we will provide a direct, more elementary construction in the following. Our separation works ``the other way around'', that is we obtain a probability measure $\mu$ which is not absolutely continuous w.r.t.~the Lebesgue measure. This shows that the Lebesgue measure has no extremal position for relative absolutely continuity inside the class of measures having the same Kurtz randoms. For comparison, a measure  satisfies Steinhaus theorem iff it is absolutely continuous w.r.t. Lebesgue measure \cite{mospan}.

\begin{theorem}
\label{theo:sepkurtz}
There is an increasing {surjective} computable function $f : \uint \to \uint$ which is not absolutely continuous, yet for any $\Pi^0_1$ set $A$ {with $\lambda(A) = 0$, }
it holds that $\lambda(f(A)) = 0$.
\end{theorem}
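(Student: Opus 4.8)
The plan is to realize $f$ as the cumulative distribution function of a computable probability measure $\mu = \tfrac12\lambda + \tfrac12\nu$ on $\uint$, where $\nu$ is a computable, non-atomic, $\lambda$-singular probability measure concentrated on the set of Kurtz random reals. Granting such a $\nu$, the function $f = \cdf_\mu$ satisfies $\cdf_\mu(x) = \tfrac12 x + \tfrac12\cdf_\nu(x)$, so it is computable; it is continuous because $\mu$ is non-atomic, and strictly increasing because $\mu \geq \tfrac12\lambda$, hence a computable homeomorphism of $\uint$ (so in particular increasing and surjective). Being a homeomorphism, $\lambda(f(A)) = \mu(A)$ for every Borel $A$ (both sides agree on intervals). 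Thus $f$ fails to be absolutely continuous, since $\mu$ has the nontrivial singular part $\tfrac12\nu$ (for continuous monotone functions, absolute continuity is equivalent to $\mu\ll\lambda$; cf.\ Fact~\ref{fact:absolutevariation}). Finally, if $A$ is $\Pi^0_1$ with $\lambda(A)=0$, then $A$ is itself a computable closed null set, so it contains no Kurtz random real and $\nu(A)=0$; hence $\lambda(f(A)) = \mu(A) = \tfrac12\lambda(A)+\tfrac12\nu(A) = 0$, which is exactly Property of the theorem.

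It remains to build $\nu$. I would construct $\cdf_\nu$ as the limit of a computable sequence of piecewise-linear increasing functions $g_s$, in the style of the construction for Theorem~\ref{theo:separating}: each $g_s$ is the cdf of a measure $\nu_s$ obtained by distributing mass uniformly over a finite family $\mathfrak J_s$ of pairwise disjoint closed intervals with prescribed masses summing to $1$. The basic move is to \emph{refine} an interval $J\in\mathfrak J_s$ by replacing it with two disjoint closed subintervals $J',J''\subseteq J$ and splitting the mass of $J$ equally between them. Refinements are always taken short enough that, level by level, the total length of the active intervals is at most $2^{-n}$ (forcing $\nu\perp\lambda$) and their maximal length is at most $2^{-n}$ (forcing $\nu$ non-atomic). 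Since refining a level-$n$ interval perturbs $\cdf_\nu$ only locally and by at most its mass $\le 2^{-n}$, performing all level-$n$ refinements before any level-$(n+1)$ refinement yields a computable modulus of convergence, so $\cdf_\nu$ is computable.

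To concentrate $\nu$ on the Kurtz randoms, recall that a real is Kurtz random iff it lies in no $\Pi^0_1$ null set, equivalently iff it lies in $U_e$ for every $e$ with $\lambda(U_e)=1$, where $(U_e)_{e\in\mathbb N}$ enumerates the c.e.\ open subsets of $\uint$ (and $A_e := \uint\setminus U_e$ is then the generic $\Pi^0_1$ set). I would run a finite-injury priority construction with requirement $R_e$ asking that, from some level on, every active interval be refined into $U_e$; this only needs to be met for $e$ with $\lambda(U_e)=1$. Concretely, when refining a level-$n$ interval $J$ at stage $s$, place $J',J''$ inside $\bigcap_{e\in S}U_{e,s}\cap J$, where $S\subseteq\{0,\dots,n\}$ is the priority-least maximal set for which this intersection contains two disjoint intervals short enough for the length bounds. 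The crux is that no $R_e$ with $\lambda(U_e)=1$ is ever permanently blocked: for such $e$ the set $A_e$ is a \emph{nowhere dense} closed null set, and a finite intersection of full-measure c.e.\ open sets is again a dense open set of full measure (nowhere-density of the complements is precisely what keeps the finite intersection dense), so inside any active interval such a common refinement region appears at a finite stage. The requirements abandoned on a given interval are exactly those whose $U_e$ has measure $<1$, which we need not satisfy. Carrying this out gives $\nu(A_e)=0$ for every $e$ with $\lambda(U_e)=1$, i.e.\ $\nu$ is concentrated on the Kurtz randoms.

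The main obstacle is this last coordination: guaranteeing, uniformly over all $e$ with $\lambda(U_e)=1$, that \emph{every} branch of the final perfect set is eventually swept into $U_e$, while simultaneously driving the total length of the active intervals to $0$ — two demands that pull against each other whenever the enumeration of some $U_e$ is slow. The resolution is the observation above that, among any finite block $U_0,\dots,U_n$, the full-measure ones admit a common sub-interval inside every active interval at some finite stage, so the measure-$1$ requirements are never stuck; the bookkeeping of which lower-measure requirements are (temporarily) dropped on which intervals is a routine finite-injury argument. One should also double-check the trivial verifications (computable convergence rate, non-atomicity and singularity of $\nu$ from the length bounds, and $\lambda(f(A))=\mu(A)$), but these are straightforward.
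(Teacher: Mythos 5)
Your reduction is clean and correct as far as it goes: if $\nu$ is a computable, non-atomic probability measure with $\nu\not\ll\lambda$ but $\nu(A)=0$ for every $\Pi^0_1$ null set $A$, then $f=\cdf_{\frac12\lambda+\frac12\nu}$ has all the required properties, and such measures do exist (the paper's own $f$ yields one via $B\mapsto\lambda(f(B))$). The fatal gap is in your construction of $\nu$. You build $\nu$ as a weak limit of measures supported on a computable nested sequence of finite unions of closed intervals $\bigcup\mathfrak{J}_n$ with $\lambda\left(\bigcup\mathfrak{J}_n\right)\le 2^{-n}$. But then $P:=\bigcap_n\bigcup\mathfrak{J}_n$ is itself a $\Pi^0_1$ null set with $\nu(P)=1$, since mass is never moved off the active intervals. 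So $\nu$ is concentrated on the non-Kurtz-randoms rather than on the Kurtz randoms, and the resulting $f$ satisfies $\lambda(f(P))=\mu(P)\ge 1/2$ for this very $\Pi^0_1$ null set $P$: the construction refutes the theorem for its own output. The priority argument cannot rescue this, because a requirement $R_e$ can be permanently blocked. By the recursion theorem, let $U_e$ be enumerated as $U_{e,s}=[0,1]\setminus\bigcup\mathfrak{J}_{s}$; then $\lambda(U_e)=1$ because $P$ is null, yet at every stage $U_{e,s}$ is disjoint from every active interval, so no refinement into $U_e$ is ever possible. Your key claim that inside any active interval a common refinement region appears at a finite stage is true only for a \emph{fixed} interval; since the active intervals must keep shrinking (to maintain computability, non-atomicity and singularity), the enumeration of $U_e$ can lag behind forever. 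Structurally, any computable singular $\nu$ that kills all $\Pi^0_1$ null sets must have its singularity witnessed by a null set not contained in any $\Pi^0_1$ (indeed any $\Sigma^0_2$) null set, so no nested-closed-intervals construction can succeed.

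The paper's construction avoids exactly this trap: it concentrates measure onto sets $F_n$ of measure $<2^{-n}$ that are \emph{not} nested, so the witness to the failure of absolute continuity is only a $\Pi^0_2$ null set; and it handles each $\Pi^0_1$ class $P_e$ by a purely negative ``freezing'' strategy with a moving target region $C_{e,s}$, arguing that whenever $\lambda(P_e)=0$ the function eventually becomes and stays piecewise linear on a set whose preimage contains $P_e$, whence $\lambda(f(P_e))=0$. If you want to keep your measure-theoretic framing, the construction of $\nu$ would have to be redesigned along those lines.
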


\begin{corollary}
There is a non-atomic probability measure $\mu$ such that $\mu$-Kurtz random and Kurtz random coincide, yet $\mu \not\ll \lambda$.
\end{corollary}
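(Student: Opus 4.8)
The plan is to read off the desired measure $\mu$ directly from the function $f$ provided by Theorem~\ref{theo:sepkurtz}. First I would normalize so that $f(0)=0$ and $f(1)=1$ and take $f$ to be strictly increasing, hence a computable homeomorphism of $\uint$ (if the construction only delivers a weakly increasing $f$, adding a tiny linear term repairs this without disturbing either of the two relevant properties). Let $\mu$ be the associated Lebesgue--Stieltjes measure, i.e.\ the probability measure with $\cdf_\mu = f$; equivalently $\mu(A) = \lambda(f(A))$. Since $f$ is continuous, $\mu$ is non-atomic, and since $f$ is computable, $\mu([a,b]) = f(b)-f(a)$ is uniformly computable, so $\mu$ is a computable probability measure.

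Next I would verify $\mu \not\ll \lambda$. By Observation~\ref{obs:cumu}, $\cdf_\mu = f$ has Luzin's (N) if and only if $\mu \ll \lambda$. But $f$ is continuous and increasing, hence of bounded variation, and by hypothesis it is not absolutely continuous; so by Fact~\ref{fact:absolutevariation} it fails Luzin's (N), and therefore $\mu \not\ll \lambda$.

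The remaining task is to show that a real is $\mu$-Kurtz random iff it is Kurtz random with respect to $\lambda$. Since $\cdf_\mu=f$ is strictly increasing, Proposition~\ref{prop:reflection} gives that $x$ is $\mu$-Kurtz random iff $f(x)$ is Kurtz random, so it suffices to show that $f$ both reflects and preserves Kurtz randomness, equivalently that $f$ and $f^{-1}$ each send computable compact null sets to null sets. For $f$ this is exactly the content of Theorem~\ref{theo:sepkurtz}: every $\Pi^0_1$ subset of $\uint$ is computably compact, and $f$ maps such a set of measure $0$ to a set of measure $0$; that this is the same as Kurtz-randomness reflection is Proposition~\ref{prop:krr_implies_n} together with its easy converse (a failure of Kurtz reflection for a computable $f$ is always witnessed by a computable compact null set whose image has positive measure, via $x\in P \Rightarrow f(x)\in f(P)$ with $f(P)$ again computably compact).

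The one place where more than bookkeeping is needed, and hence the main obstacle, is the corresponding statement for $f^{-1}$: that $f^{-1}$ too maps computable compact null sets to null sets. Here the \emph{shape} of the function from Theorem~\ref{theo:sepkurtz} matters, and I would appeal to the fact that its construction can be (and naturally is) set up so that $f$ has slope bounded away from $0$, say $f(x)-f(y)\ge c\,(x-y)$ for $x>y$ and a fixed $c>0$, with the failure of absolute continuity coming entirely from an added singular increasing term. Then $f^{-1}$ is Lipschitz, hence has Luzin's (N), hence reflects (indeed preserves) every notion of randomness, in particular maps computable compact null sets to null sets. (Alternatively one simply observes that the proof of Theorem~\ref{theo:sepkurtz} already establishes this about $f^{-1}$.) Combining the two directions, $x$ is $\mu$-Kurtz random iff $f(x)$ is Kurtz random iff $x$ is Kurtz random, so $\mu$ is a non-atomic computable probability measure with the same Kurtz randoms as $\lambda$ yet with $\mu \not\ll \lambda$. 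The genuine mathematical work — producing $f$ in the first place — has already been done in Theorem~\ref{theo:sepkurtz}; the corollary is its translation into the language of measures via Observation~\ref{obs:cumu} and Proposition~\ref{prop:reflection}.
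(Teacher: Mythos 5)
There is a genuine gap in the direction you yourself flag as the main obstacle, and the proposed fix does not work. You set $\mu=\hat\mu$ with $\cdf_\mu=f$, so that coincidence of Kurtz randomness requires \emph{both} that $f$ reflects Kurtz randomness (every computable compact $P$ with $\lambda(P)=0$ has $\lambda(f(P))=0$ --- this is Theorem~\ref{theo:sepkurtz}) \emph{and} that $f$ preserves it (every computable compact $P$ with $\lambda(P)>0$ has $\lambda(f(P))>0$; otherwise $P$ is a $\mu$-Kurtz test capturing Lebesgue--Kurtz randoms). Theorem~\ref{theo:sepkurtz} says nothing about the second property, and its construction actively works against it: the $\concentrate$ operation replaces $f_n$ on $f_n^{-1}(B_n)$ by a function ``which alternates between a slope of $0$ and a large positive slope,'' pushing all of $\lambda(B_n)>1/2$ worth of image measure onto a domain set of measure $<2^{-n}$. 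So the limit $f$ is only weakly increasing, is built entirely out of flat and near-flat pieces, and your claim that the construction ``naturally is set up so that $f$ has slope bounded away from $0$'' is simply false; nor does the proof of Theorem~\ref{theo:sepkurtz} ``already establish this about $f^{-1}$'' --- it only ever bounds $\lambda(f(P))$ from above. If any flat piece survives to the limit, $\hat\mu$ assigns measure $0$ to a nondegenerate interval and the two Kurtz notions visibly differ. (Your opening normalization --- adding $\varepsilon x$ to force strict monotonicity --- also silently requires re-verifying the conclusion of Theorem~\ref{theo:sepkurtz} for the perturbed function, but that is minor compared to the preservation gap.)

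The paper sidesteps all of this with a small trick you are missing: it does not take $\mu=\hat\mu$ but rather $\mu=\tfrac12\hat\mu+\tfrac12\lambda$. Then $\mu\geq\tfrac12\lambda$, so $\mu(P)=0$ trivially implies $\lambda(P)=0$ (preservation direction for free), while for computable compact $P$ with $\lambda(P)=0$ Theorem~\ref{theo:sepkurtz} gives $\hat\mu(P)=\lambda(f(P))=0$ and hence $\mu(P)=0$; thus the computable compact null sets of $\mu$ and $\lambda$ coincide and so do the Kurtz randoms. Meanwhile $\mu\not\ll\lambda$ survives the averaging, since $\mu(B)\geq\tfrac12\hat\mu(B)>0$ for the witness $B$ with $\lambda(B)=0$. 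If you want to keep your cleaner ``$\mu=\hat\mu$, apply Proposition~\ref{prop:reflection}'' framing, you would have to strengthen Theorem~\ref{theo:sepkurtz} to produce a strictly increasing $f$ that also preserves Kurtz randomness, which is a nontrivial additional demand on the construction.
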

\begin{proof}
{Let $\hat \mu$ be the probability measure whose cumulative distribution function is $f$,
equivalently $\hat \mu(B) := \lambda(f(B))$.
Since $f$ does not have Luzin's (N), there is some set $B$ with $\lambda(B) = 0$
and $\hat\mu(B) >0$.  Let $\mu = \frac{1}{2}\hat\mu + \frac{1}{2}\lambda$.
Then using the same $B$, we see that $\mu \not\ll \lambda$.  On the other hand,
if $A$ is a $\Pi^0_1$ set, then $\lambda(A) = 0$ implies $\hat \mu(A) = 0$,
and thus $\lambda(A) = 0$ if and only if $\mu(A) = 0$.}
\end{proof}

\begin{corollary}
For increasing computable functions $f : \uint \to \uint$, reflecting Kurtz randomness
{does not imply} Luzin's (N).
\end{corollary}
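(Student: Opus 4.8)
The plan is to read the corollary off directly from Theorem~\ref{theo:sepkurtz}. Let $f:\uint\to\uint$ be the increasing surjective computable function produced there: it is not absolutely continuous, yet $\lambda(f(A))=0$ whenever $A\subseteq\uint$ is $\Pi^0_1$ with $\lambda(A)=0$. I claim this $f$ is already a witness.

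First I would check that $f$ fails Luzin's (N). Being monotone, $f$ has bounded variation (its total variation on $\uint$ is $f(1)-f(0)$). Hence Fact~\ref{fact:absolutevariation} applies: a continuous function of bounded variation has Luzin's (N) if and only if it is absolutely continuous. Since $f$ is not absolutely continuous, it does not have Luzin's (N).

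Next I would verify that $f$ reflects Kurtz randomness. Suppose $x\in\uint$ is not Kurtz random; then $x$ lies in some $\Pi^0_1$ (hence compact) null set $P\subseteq\uint$. The image $f(P)$ is again a $\Pi^0_1$ set, uniformly in a code for $P$, since the continuous image of a computable compact set under a computable function is computable compact — the same fact underlying Proposition~\ref{prop:allkurtzreflection}. By the defining property of $f$ from Theorem~\ref{theo:sepkurtz}, $\lambda(f(P))=0$, so $f(P)$ is a $\Pi^0_1$ null set containing $f(x)$, whence $f(x)$ is not Kurtz random. Taking the contrapositive gives that $f$ reflects Kurtz randomness.

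I do not expect any real obstacle: the content is already packaged into Theorem~\ref{theo:sepkurtz} (and, equivalently, into the preceding corollary about the measure $\mu$, through which one could instead route the argument via Proposition~\ref{prop:reflection}). The only points requiring a little care are (a) the appeal to Fact~\ref{fact:absolutevariation}, which is legitimate precisely because monotone functions automatically have bounded variation, so for increasing continuous $f$ failing absolute continuity is the same as failing Luzin's (N); and (b) noting that working on the compact space $\uint$ rather than all of $\mathbb{R}$ causes no difficulty, as $\Pi^0_1$ subsets of $\uint$ are already compact.
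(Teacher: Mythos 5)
Your proposal is correct and follows exactly the route the paper intends: the corollary is stated without explicit proof because it is read off from Theorem~\ref{theo:sepkurtz} in precisely the way you describe, using Fact~\ref{fact:absolutevariation} (legitimate since monotone functions have bounded variation) for the failure of Luzin's (N) and the compact-image argument underlying Propositions~\ref{prop:allkurtzreflection} and~\ref{prop:krr_implies_n} for Kurtz randomness reflection. No gaps.
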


We prepare our construction. Suppose $h:[0,1]\rightarrow [0,1]$ is a piecewise linear increasing function,
$B \subseteq [0,1]$ is a finite union of intervals {with rational endpoints}, and $\delta>0$.
We define a new function
$$\concentrate(h,B,\delta):[0,1]\rightarrow [0,1]$$
which concentrates $\lambda(B)$-much measure onto a set of Lebesgue
measure at most $\delta$, as follows.

\begin{definition}[Definition of Concentrate]
Given $h, B, \delta$ as above, write $B = \cup_{k<n} I_k$
where $I_k$ are almost disjoint intervals and $h\uhr h^{-1}(I_k)$ is
linear (contained in a single piece of the piecewise function).  Modify
$h$ on each interval $h^{-1}(I_k)$ by substituting a piecewise linear
function which alternates between a slope of 0 and a large positive slope.
The modification is chosen in a canonical computable way
to obtain the following outcomes.  Below, $\hat h$ denotes $\concentrate(h,B,\delta)$.
\begin{enumerate}
\item $h = \hat h$ outside of $h^{-1}(B)$.
\item Letting $F$ denote the union of the pieces of $\hat h^{-1}(B)$ which
have positive slope, we have $\lambda(F)<\delta$ and $f(F) = B$, and
\item For all $x$, $|h(x) - \hat h(x)|<\delta$.
\end{enumerate}
\end{definition}

\begin{lemma}
\label{lem:sepkurtz}
Suppose that $(B_n)_{n \in \mathbb{N}}$ is a computable {sequence} of finite unions of intervals in $\uint$.  Define a sequence of functions
$(f_n)_{n \in \mathbb{N}}$ inductively by setting $f_0(x) = x$ and
$$f_{n+1} = \concentrate(f_n, B_n, 2^{-n}).$$
Then $(f_n)_{n \in \mathbb{N}}$ converges uniformly to a computable increasing
function $f$.  Furthermore, if there is some $\varepsilon>0$ such that $\lambda(B_n)>\varepsilon$
for all $n$, then $f$ fails Lusin's (N).
\end{lemma}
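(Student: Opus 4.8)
The statement has two parts: (a) the $(f_n)$ converge uniformly to a computable increasing $f$, and (b) if $\lambda(B_n) > \varepsilon$ for all $n$, then $f$ fails Luzin's (N). For (a), I would extract the three bullet guarantees of $\concentrate$: condition (3) gives $|f_{n+1}(x) - f_n(x)| < 2^{-n}$ for all $x$, so $(f_n)$ is uniformly Cauchy with a computable rate of convergence; since each $f_n$ is piecewise linear (hence computable, uniformly in $n$) and the $\concentrate$ operation is carried out in a canonical computable way, the limit $f$ is computable. Each $f_n$ is increasing (the $\concentrate$ operation only alternates slope $0$ with a large positive slope, so monotonicity is preserved), and a uniform limit of increasing functions is increasing, giving (a).

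**The measure-theoretic core.** For (b), the idea is that $\concentrate(f_n, B_n, 2^{-n})$ pushes $\lambda(B_n)$-much measure onto a set $F_n$ of Lebesgue measure $< 2^{-n}$ with $f_{n+1}(F_n) = B_n$. I would set $E_n = f_{n+1}^{-1}(B_n) \cap F_n$ — the "high-slope" part of the preimage of $B_n$ at stage $n+1$ — and note $\lambda(E_n) < 2^{-n}$. The key point is that later modifications (at stages $> n+1$) act only inside preimages of the $B_m$'s, and by re-expressing everything through the limit function $f$, one wants a set $E \subseteq \uint$ with $\lambda(E) = 0$ but $\lambda(f(E)) \geq \varepsilon$. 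The natural candidate is $E = \bigcap_N \bigcup_{n \geq N} (\text{stage-}n\text{ high-slope region, tracked into the limit})$, i.e. the set of points that get "concentrated" infinitely often; one shows $\lambda$ of the $n$-th term is summable-tail-small (bounded by $\sum_{n \geq N} 2^{-n} \to 0$) so $\lambda(E) = 0$, while $f(E) \supseteq$ some positive-measure residue of each $B_n$, using $\lambda(B_n) > \varepsilon$ together with the uniform-compactness / limit-stability argument (as in the proof of Theorem \ref{theo:separating}, where $\lambda(f[B]) \geq \lambda(A) - \varepsilon$ was obtained as a limit over nested compact stages). Concretely: the portion of $B_n$ not "rescued" into $f(E)$ is controlled by how much the map changes after stage $n+1$, which is $< 2^{-n}$-close in sup norm, so it cannot destroy a fixed positive fraction $\varepsilon$ of the measure of $f$ applied to the relevant compact pieces.

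**Main obstacle.** The delicate part is bookkeeping the identification between the stage-$n$ object $F_n$ (living in the domain of $f_{n+1}$) and a subset of the domain of the limit $f$, and then proving $\lambda(f(E)) > 0$ rather than merely $\lambda(f_n(E_n)) \not\to 0$. This is exactly the subtlety handled in the Theorem \ref{theo:separating} argument via "since this estimate holds for every stage of a nested intersection of compact sets, it follows that $\lambda(f[B]) \geq \lambda(A) - \varepsilon$"; here I would mimic that: take $E = \bigcap_N G_N$ with $G_N$ a finite union of closed intervals, $G_{N+1} \subseteq G_N$, $\lambda(G_N) \to 0$, each $G_N$ chosen so that $\lambda(f(G_N)) \geq \varepsilon/2$ (possible because at stage $N+1$ the high-slope region already carries $\lambda(B_N) > \varepsilon$ of image-measure, and subsequent perturbations are sup-norm-small), and then conclude $\lambda(f(E)) \geq \varepsilon/2 > 0$ from uniform compactness of continuous images. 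Thus $E$ is a null set with non-null image, so $f$ fails Luzin's (N). I expect the error-accounting — showing the "large positive slope" pieces can be chosen so the rescued image measure stays bounded below by a fixed constant across all stages simultaneously — to be where all the real work sits; everything else is routine.
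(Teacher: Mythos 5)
Your part (a) is exactly the paper's argument and is fine. For part (b), however, there is a genuine gap in the error-accounting step that you yourself flag as ``where all the real work sits.'' You propose to control $\lambda(f(F_n))$ versus $\lambda(f_{n+1}(F_n))=\lambda(B_n)$ by the fact that all later perturbations are sup-norm small (total $<2^{-n}$). But for an increasing function, the image measure of a finite union of $k_n$ almost disjoint intervals is $\sum_i \bigl(g(b_i)-g(a_i)\bigr)$, and a sup-norm perturbation of size $\delta$ can change this by up to $2k_n\delta$; the number $k_n$ of positive-slope pieces produced by $\concentrate$ at stage $n$ is not bounded, so this estimate does not give you a fixed positive lower bound on $\lambda(f(F_n))$. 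The paper's proof closes exactly this gap with a different observation: $\concentrate$ \emph{never changes the value of $h$ at a break point of $h$}, and the endpoints of the pieces of $F_n$ are break points of $f_{n+1}$; hence $f$ agrees with $f_{n+1}$ at those endpoints, and since $f$ is continuous and increasing, $f(F_n)=f_{n+1}(F_n)=B_n$ \emph{exactly}, with no error term at all. Without this (or some substitute exact-preservation argument), your lower bound on the image measure does not go through.

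A second, less serious divergence: once one knows $\lambda(F_n)<2^{-n}$ and $f(F_n)=B_n$ with $\lambda(B_n)>\varepsilon$, the paper does not build a single null set with non-null image. It simply concludes that $f$ is not absolutely continuous, and then invokes the classical fact (Fact \ref{fact:absolutevariation}) that a continuous function of bounded variation has Luzin's (N) if and only if it is absolutely continuous; since $f$ is increasing it has bounded variation, so it fails (N). Your plan of assembling a nested sequence $G_{N+1}\subseteq G_N$ of finite unions of intervals with $\lambda(f(G_N))\geq\varepsilon/2$ has an additional unaddressed issue: the sets $F_n$ for different $n$ are not nested, so the natural candidate is the tail-union closure $\overline{\bigcup_{n\geq N}F_n}$, which is compact but not a finite union of intervals; one can make the nested-compact-image argument work with these, but it is substantially more machinery than the paper needs, and it still relies on the exact identity $f(F_n)=B_n$ that your write-up is missing.
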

\begin{proof}
The uniform convergence to a computable $f$ follows from the third property
in the definition of $\concentrate$, {and $f$ is increasing because each $f_n$
is}.  Observe that $\concentrate$ never
changes the value of $h$ at a break point of $h$.  Therefore,
the second property in the definition of $\concentrate$, which tells
us that $f_n(F) = B$ for some $F$ with $\lambda(F)<2^{-n}$,
implies that $f(F) = B$ as well (here we also used the fact that
$f$ is continuous and increasing).  It follows that $f$ is not absolutely
continuous, and thus fails Lusin's (N).
\end{proof}

\begin{proof}[Proof of Theorem \ref{theo:sepkurtz}]
We construct a computable sequence $(B_n)_{n \in \mathbb{N}}$
such that $\lambda(B_n)> 1/2$ for all $n$, and argue that
the function $f$ constructed as in Lemma \ref{lem:sepkurtz} satisfies $\lambda(f(P))=0$ whenever $P \in \Pi^0_1$
and $\lambda(P)=0$.

The strategy for a single $\Pi^0_1$ class $P_e$ is as follows.
Let $C_{e,0}$ be some interval of length $\varepsilon_e$.
Let $B_s = [0,1]\setminus C_{e,s}$.
As long as $f_s(P_{e,s})\cap C_{e,s}$ has measure at least $\varepsilon_e/2$,
define $C_{e,s+1} = C_{e,s}$.
If $f(P_{e,s})\cap C_{e,s}$ has measure less than
$\varepsilon_e/2$, define $C_{e,s+1} = (f_s(P_{e,s}) \cap C_{e,s}) \cup C$,
where $C$ is a new interval or finite union of intervals almost disjoint
from $\cup_{t\leq s} C_{e,t}$.  Choose $C$ so that that $C_{e,s+1}$
has measure $\varepsilon_e$, if possible; if this is not
possible, {choose $C$ so that} $\cup_{t\leq s+1} C_{e,t} = [0,1]$.  In
the latter case the measure of $C_{e,s+1}$ may be less than
$\varepsilon_e$ and this is
also fine.  If we reach this degenerate situation, we also stop checking
the measures and simply let
$C_{e,t} = C_{e,s+1}$ for all $t>s$.

We claim that if $\lambda(P_e) = 0$, then $\lambda(f(P_e)) = 0$.
Suppose at some stage $s$ we have that the measure of
$f_s(P_{e,s}) \cap C_{e,s}$ is greater than $\varepsilon_e/2$.
If this continues for all $t>s$, then $f$ and $f_s$ coincide on
the set $J:= f^{-1}(C_{e,s})$.  It follows that $f$ is piecewise linear
on $J$, but $f(P_e\cap J)$ has positive measure; this is impossible
since $P_e$ has measure 0.  We conclude that nothing lasts forever;
eventually we do reach a stage $s$ where
$\cup_{t\leq s} C_{e,s} = [0,1]$.  Since $C_{e,s}$ never changes again,
$f$ and $f_s$ again coincide on {$J:= f^{-1}(C_{e,s})$}.  Observe also that
$P_e \subseteq J$.  Since $f_s$ is piecewise linear and $\lambda(P_e) = 0$,
we also have $\lambda(f_s(P_e)) = 0$, and thus $\lambda(f(P_e)) = 0$.

The above strategy works purely with negative requirements, specifically
freezing $f$ on $f_s^{-1}(C_{e,s})$.  If other requirements also freeze
$f$ on other places, it has no effect on the proof above. The only thing
to consider when
combining requirements is that we need to make sure $\lambda(B_s)>1/2$
for all $s$, where we now define
$$B_s = [0,1] \setminus \bigcup_{e<s} C_{e,s}.$$
Since we always have $\lambda(C_{e,s}) \leq \varepsilon_e$,
we can keep the sets $B_s$ large by choosing the values of $\varepsilon_e$
to satisfy $\sum_e \varepsilon_e < 1/2$.
\end{proof}

\section{$\Pi^1_1$-hardness of randomness reflection}
\label{sec:generichardness}
If we do not restrict the domain of the functions to (locally) compact spaces, then essentially any form of randomness reflection is $\Pi^1_1$-hard. We show a construction which yields a function having either null range, or is surjective when restricted to a specific null subset of its domain. In particular, our construction is independent of the randomness notions involved.

\begin{theorem}
\label{theo:generichardness}
Let $K, L \subseteq \uint^2$ be non-empty sets containing only Kurtz randoms. Then ``whenever $f(x) \in K$, then already $x \in L$'' is a $\Pi^1_1$-hard property of continuous functions $f : (\uint \setminus \mathbb{Q}) \times \uint \to \uint^2$.
\begin{proof}
It is well-known that $\uint \setminus \mathbb{Q}$ and $\Baire$ are homeomorphic, and even computably so. We identify the spaces in such a way that the Lebesgue measure induced on $\Baire$ satisfies $\lambda(\{p \in \Baire \mid \forall n \ p_{2n} = p_{2n+1}\}) = 0$.

We construct a function $f_T : \Baire \times \uint \to \uint^2$ from a countably-branching tree $T$. First, we modify $T$ to obtain $\hat{T} = \{w_0w_0w_1w_1\ldots w_{n-1}w_{n-1}w_n \mid w \in T\} \cup \{w_0w_0w_1w_1\ldots w_{n-1}w_{n-1}w_nw_n \mid w \in T\}$. Clearly, $T$ is well-founded iff $\hat{T}$ is, and $[\hat{T}]$ contains no Kurz-randomns (so in particular,$[\hat{T}] \times \uint \cap L = \emptyset$). For any $p \in \Baire$, let $|T,p| = n$ iff $n$ is minimal such that $p_{\upharpoonright n} \notin \hat{T}$, and $|T,p| = \infty$ if $p \in [\hat{T}]$.

Let $s_\infty : \uint \to \uint^2$ be a computable space-filling curve, and let $(s_n)_{n \in \mathbb{N}}$ be a computable fast Cauchy sequence converging to $s_\infty$ such that any $s_n(\uint)$ is a finite union of line segments. We then define $f_T(p,x) = s_{|T,p|}(x)$. This construction is computable in $T$. We claim that $f_T$ has our reflection property iff $T$ is well-founded.

If $T$ is well-founded, then the range of $f_T$ is $\bigcup_{n \in \mathbb{N}} s_n(\uint)$. Since any $s_n(\uint)$ is a null $\Pi^0_1$-set, we see that $f_T$ never takes any Kurtz random values (in particular, none in $K$), and thus vacuously, if $f(x) \in K$ then $x \in L$. The argument in fact establishes that for arbitrary $T$, whenever $p \notin \hat{T}$ then $f_T(p,x)$ is not Kurtz random regardless of $x$.

Now assume that $T$ is ill-founded and that $y \in K$. We find that $f_T^{-1}(\{y\}) = [\hat{T}] \times s_\infty^{-1}(\{y\})$. Since $\hat{T}$ is illfounded and $s_\infty$ is space-filling, this set is non-empty. But by construction of $\hat{T}$, it cannot contain any elements of $L$. Hence, $f_T$ does not have our reflection property.
\end{proof}
\end{theorem}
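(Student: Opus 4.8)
The plan is to prove $\Pi^1_1$-hardness by giving a computable reduction from the problem of well-foundedness for countably-branching trees, which is well known to be $\Pi^1_1$-complete. To each tree $T$ of finite sequences of naturals I would computably associate a continuous function $f_T : (\uint \setminus \mathbb{Q}) \times \uint \to \uint^2$ such that $f_T$ has the reflection property ``$f_T(z) \in K \Rightarrow z \in L$'' exactly when $T$ is well-founded. Since $\uint \setminus \mathbb{Q}$ is computably homeomorphic to $\Baire$, I would carry out the construction on $\Baire \times \uint$, choosing the homeomorphism so that the closed set $\{p \in \Baire \mid \forall n\ p_{2n} = p_{2n+1}\}$ is Lebesgue-null; this null class is the device that will force the only ``dangerous'' preimages to lie outside $L$.

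The ingredients are a computable space-filling curve $s_\infty : \uint \to \uint^2$ together with a computable sequence $(s_n)_n$ of continuous maps $s_n : \uint \to \uint^2$ whose ranges are finite unions of rational line segments (hence null $\Pi^0_1$ classes), converging uniformly to $s_\infty$ with a computable modulus. Given $T$, I would first replace it by the tree $\hat T$ obtained by doubling every coordinate of a node except possibly the last one, so that $T$ is well-founded iff $\hat T$ is and every branch of $\hat T$ satisfies $p_{2n} = p_{2n+1}$ for all $n$; in particular $[\hat T]$ lies in the null class above, contains no Kurtz random, and so $[\hat T] \times \uint$ is disjoint from $L$. Writing $|T,p|$ for the least $n$ with $p \upharpoonright n \notin \hat T$, and $|T,p| = \infty$ when $p \in [\hat T]$, I would set $f_T(p,x) = s_{|T,p|}(x)$.

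Then I would check the two cases. If $T$ is well-founded, $|T,p|$ is finite for every $p$, so the range of $f_T$ is contained in $\bigcup_n s_n(\uint)$, a countable union of null $\Pi^0_1$ sets; hence $f_T$ attains no Kurtz random value, in particular no value in $K$, and the reflection property holds vacuously. If $T$ is ill-founded, fix $y \in K$ (using $K \neq \emptyset$): for $p \notin [\hat T]$ the value $f_T(p,x) = s_n(x)$ lies in a null $\Pi^0_1$ set and so cannot equal the Kurtz random $y$, whereas for $p \in [\hat T]$ we have $f_T(p,x) = s_\infty(x)$, and surjectivity of $s_\infty$ yields some $x$ with $s_\infty(x) = y$. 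Thus $f_T^{-1}(\{y\}) = [\hat T] \times s_\infty^{-1}(\{y\})$ is non-empty, yet every point of it lies in the null class $\{p \mid \forall n\ p_{2n} = p_{2n+1}\} \times \uint$ and hence is not in $L$; so $f_T$ fails the reflection property.

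The two things that need care are the continuity of $f_T$ and the uniformity of $T \mapsto f_T$. For continuity, note that $\{p \mid |T,p| = n\}$ is clopen for each finite $n$, so near such points $f_T$ agrees with the continuous map $s_n$; at a point of $[\hat T]$ one uses the uniform convergence $s_n \to s_\infty$, since any $p'$ agreeing with $p$ on a sufficiently long initial segment has $|T,p'|$ large, so $s_{|T,p'|}$ is uniformly within $\varepsilon$ of $s_\infty$. For uniformity, once one observes that $\hat T$, the sequence $(s_n)$ together with its modulus, and $s_\infty$ are all computable, approximations to $f_T$ on rational boxes can be produced uniformly in $T$. I expect the main subtlety to be the bookkeeping needed to transfer between $\Baire$ and $\uint \setminus \mathbb{Q}$ so that the doubling trick really produces a genuine $\Pi^0_1$ null class of $\uint^2$ and so that Kurtz randomness matches up on both sides.
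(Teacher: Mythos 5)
Your proposal follows essentially the same route as the paper's own proof: the reduction from well-foundedness of countably-branching trees, the coordinate-doubling modification $\hat T$ placing $[\hat T]$ inside a null class disjoint from $L$, the space-filling curve $s_\infty$ approximated by maps $s_n$ with null $\Pi^0_1$ ranges, and the definition $f_T(p,x)=s_{|T,p|}(x)$ with the same two-case analysis. The additional continuity and uniformity checks you supply are correct and merely make explicit what the paper leaves implicit.
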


\section{A glimpse at related notions}
\label{sec:digression}
As a slight digression, we have a look at related properties of functions, namely those where the image of null sets are required to belong to some other ideals of \emph{small} sets, such as being countable or being meager. These properties were investigated by Sierpinski \cite{siep34} and Erd\"os \cite{erdos43}, amongst others. Our results are formulated in some generality, but as a consequence, we do see that we do not get any ``regular" functions with these properties. In contrast, Erd\"os showed that under $\mathrm{CH}$ there is a bijection $f : \mathbb{R} \to \mathbb{R}$ mapping meager sets to null sets with $f^{-1}$ mapping null sets to meager sets.

\begin{theorem}
\begin{itemize}
\item[(1)] If $A$ is a nonnull $\mathbf{\Sigma}^1_1$ set and  $f$ is a continuous function mapping any null subset of $A$ to a countable set, then  the range of $f$ restricted to $A$  is countable. In particular, if $A$ is an interval, then  range of $f$ restricted to $A$  is  a constant function.
\item[(2)] Assume $\mathrm{CH}$, there is a function $f$ mapping any null set to a countable set  such that the range of $f$ is $\mathbb{R}$,  and $f(A)$ is uncountable for any nonnull set $A$ but for every $y$, $f^{-1}(y)$ is an uncountable Borel null set.
\item[(3)] If $A$ is a nonnull set and  $f$ is a continuous function mapping any null subset of $A$ to a meager set, then  the range of $f$ restricted to $A$  is meager. In particular, if $A$ is an interval, then  range of $f$ restricted to $A$  is  a constant function.
\item[(4)] If $f$ is a measurable function and maps a null set to a meager set, then the range of $f$ is meager. In particular, if $f$ is continuous with the property, then $f$ is constant.
\item[(5)] If $f$ has the Baire property and maps a meager set to a null set, then the range of $f$ is null. In particular, if $f$ is continuous with the property, then $f$ is constant.
\end{itemize}
\end{theorem}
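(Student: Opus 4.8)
The five parts split by method. \textbf{Part (1) and its corollary.} The plan is to assume $f[A]$ uncountable and manufacture a null $N\subseteq A$ with $f[N]$ uncountable. Since $A$ is $\Sigma^1_1$ it is Lebesgue measurable, so $A=A_0\cup N_0$ with $A_0$ Borel and $N_0$ null; as $f[N_0]$ is countable by hypothesis, $f[A_0]$ is uncountable, hence (being analytic) contains a nonempty perfect set. After intersecting $A$ with a bounded window we may assume $\lambda(A_0)<\infty$ and $f[A_0]$ bounded. Form the pushforward $\nu(B):=\lambda(f^{-1}(B)\cap A_0)$, remove its countably many atoms by passing from $A_0$ to $A_1:=A_0\setminus f^{-1}(D)$ ($D$ the atoms); then $\nu_1(B):=\lambda(f^{-1}(B)\cap A_1)$ is atomless and $f[A_1]$ still uncountable. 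Inside a perfect $Q\subseteq f[A_1]$ run a Cantor scheme $\{F_\sigma\}$ of relatively clopen pieces of vanishing diameter, at each split choosing $F_{\sigma 0},F_{\sigma 1}$ of $\nu_1$-mass so small that $\sum_{|\sigma|=n}\nu_1(F_\sigma)\to 0$; the limit $P=\bigcap_n\bigcup_{|\sigma|=n}F_\sigma$ is then perfect with $\nu_1(P)=0$, so $N:=f^{-1}(P)\cap A_1$ is a null subset of $A$ and, since $P\subseteq f[A_1]$, $f[N]=P$ is uncountable --- contradiction. The interval corollary is immediate, as $f[A]$ is then connected. (Here $\Sigma^1_1$ is used only for measurability of $A$; ``measurable'' would do.)

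\textbf{Part (2).} This is a transfinite recursion of length $\omega_1$ using $\mathrm{CH}$. Enumerate the null $G_\delta$ sets as $(N_\alpha)_{\alpha<\omega_1}$. At stage $\alpha$, with $S_\alpha:=\bigcup_{\beta<\alpha}P_\beta$ a countable union of null Borel sets (hence null), pick a perfect null $Q_\alpha\subseteq\mathbb R\setminus S_\alpha$ and set $P_\alpha:=(N_\alpha\setminus S_\alpha)\cup Q_\alpha$ --- Borel, null, uncountable, and disjoint from all earlier $P_\beta$. One checks the $P_\alpha$ partition $\mathbb R$ (every real lies in some $N_\alpha$, hence in some $P_\gamma$ with $\gamma\le\alpha$) and that each $N_\alpha$ meets only the countably many parts $\{P_\beta:\beta\le\alpha\}$. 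Using $\mathrm{CH}$, fix a bijection $\rho:\omega_1\to\mathbb R$ and let $f(x):=\rho(\alpha)$ for the unique $\alpha$ with $x\in P_\alpha$. Then $f^{-1}(\rho(\alpha))=P_\alpha$ is uncountable Borel null, $\mathrm{range}(f)=\mathbb R$, each null set (being inside some $N_\alpha$) maps into a countable set, and hence a set with uncountable image cannot be null.

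\textbf{Parts (3) and (4).} I would prove the single statement ``if $A$ is nonnull, $f$ is measurable on $A$, and $f$ maps null subsets of $A$ to meager sets, then $f[A]$ is meager''; part (4) is the case $A=\mathbb R$ (or: apply (3) to the closed pieces of Luzin's theorem, the null remainder absorbed by the hypothesis), and the ``constant'' clauses follow from connectedness of $f[A]$ for continuous $f$. For the proof, suppose $f[A]$ non-meager. Pass to a measurable hull $\widetilde A\supseteq A$ and form the ($\sigma$-)finite Borel measure $\nu(B):=\lambda(f^{-1}(B)\cap\widetilde A)$ on the codomain. Any $\sigma$-finite Borel measure on $\mathbb R$ admits $\mathbb R=M\cup Z$ with $M$ meager $F_\sigma$ and $Z$ Borel with $\nu(Z)=0$ (remove atoms, then cover a countable dense set by intervals of tiny $\nu$-mass). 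Now $S:=f[A]\setminus M\subseteq Z$ is non-meager (as $f[A]$ is, $M$ meager), $f^{-1}(S)\cap A\subseteq f^{-1}(Z)\cap\widetilde A$ is null, and $f[f^{-1}(S)\cap A]=S$ since $S\subseteq f[A]$ --- a null subset of $A$ with non-meager image, contradiction.

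\textbf{Part (5).} Here measure and category are interchanged relative to (3), and the pushforward-measure trick does \emph{not} dualize; this asymmetry is the main obstacle. The plan: since $f$ has the Baire property, there is a comeager $G_\delta$ set $G$ with $f|_G$ continuous, and $f[\mathbb R\setminus G]$ is null because $\mathbb R\setminus G$ is meager, so it suffices to show $f[G]$ is null --- i.e. the category-dual of (3): if $A$ is non-meager, $f|_A$ continuous, and $f$ maps meager subsets of $A$ to null sets, then $f[A]$ is null. For this, pass (after discarding subintervals on which $A$ is meager, whose contribution to $f[A]$ is null) to an interval $I'$ with $A\cap I'$ dense in $I'$ and $f[A\cap I']$ non-null, say of outer measure $>2\eta>0$; choose a countable dense $\{x_n\}\subseteq A\cap I'$ and, by continuity, relatively open $W_n\ni x_n$ with $\lambda(f[W_n])$ small enough that $\sum_n\lambda(f[W_n])<\eta$. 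Then $N:=(A\cap I')\setminus\bigcup_n W_n$ is nowhere dense, hence meager in $\mathbb R$ and contained in $A$, while $f[N]\supseteq f[A\cap I']\setminus\bigcup_n f[W_n]$ has outer measure $>2\eta-\eta=\eta>0$ --- contradicting the hypothesis. The delicate points are the Baire-category form of Luzin's theorem, the reduction to a subinterval of density, and the summable budget $\sum 2^{-n}\eta$ that keeps the leftover meager set's image of positive measure; the ``constant'' corollary is again connectedness.
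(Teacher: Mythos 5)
Your proposal is correct, but for parts (1), (3), (4) and (5) it takes a genuinely different route from the paper, whose explicit aim in this section is to derive these classical facts by \emph{relativized computability}. For (1) the paper fixes $x$ with $f$ computable in $x$ and $A \in \Sigma^1_1(x)$, takes a $\Delta^1_1(\KO^{x\oplus z})$-generic $g$ for each $z\in A$, puts $z$ into a $\Delta^1_1(g)$-null set, and uses that a countable $\Sigma^1_1(x\oplus g)$ set consists of reals hyperarithmetic in $x\oplus g$ to conclude $f(z)\leq_h x$; your pushforward-measure argument (kill the atoms, then extract a perfect set of $\nu_1$-measure zero inside the uncountable analytic image) reaches the same conclusion by purely classical means, and you correctly note it needs only measurability of $A$. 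For (3) the paper uses that a $2$-$x$-random $r$ cannot compute a $2$-$x$-generic, so $f[A\cap\{2\text{-}x\text{-randoms}\}]$ lands in a meager set, and (4) is reduced to (3) via Luzin's theorem; your measure--category orthogonality argument ($\mathbb R = M\cup Z$ with $M$ meager and $\nu(Z)=0$ for the pushforward $\nu$) handles both at once and avoids Luzin's theorem. For (5) the paper only says ``dual to (4)''; you supply an actual proof (Baire property gives continuity on a comeager $G_\delta$, then a covering argument with $\sum_n\lambda(f[W_n])<\eta$ leaves a nowhere dense set with non-null image), which is valuable, though your claim that the duality genuinely breaks is a bit overstated: the paper's computability proof of (3) \emph{does} dualize cleanly (sufficiently generic reals compute no randoms), which is what ``dual'' refers to; it is only your pushforward trick that fails to dualize. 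Part (2) is essentially the paper's transfinite construction, slightly cleaner in that you make the blocks $P_\alpha$ explicitly uncountable by inserting a perfect null set $Q_\alpha$. Two small points to tidy: you need to localize to a bounded window before forming $\nu$ in (3)/(4) as well as in (1), since the $M\cup Z$ decomposition genuinely requires $\sigma$-finiteness; and the perfect set $Q$ in (1) should be taken homeomorphic to Cantor space so that ``relatively clopen'' pieces exist. Neither affects correctness.
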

\begin{proof}

(1). Fix a real $x$ so that $f$ is computable in $x$ and $A$ is $\Sigma^1_1(x)$.  Now for any real $z\in A$, let $g$ be a $\Delta^1_1( \KO^{x\oplus z})$-generic real.  Then $z$ cannot be Martin-L\" of random relative to $g$ and so there there must be a $\Delta^1_1(g)$-null set $G$ so that $z\in G\cap A$. By the assumption,  $f(G\cap A)$ is a  $\Sigma^1_1(x\oplus g)$-countable set and so every real in $f(G\cap A)$ is hyperarithmetically below $x\oplus g$. In particular, $f(z)\leq_h   x\oplus g$.  Since $f$ is computable in $x$, we also have that $f(z)\leq_h x\oplus z$. Then $f(z)\leq_h x$ since $g$ is $\Delta^1_1( \KO^{x\oplus z})$-generic real. By the arbitrarility of $z$, the range of $f$ restricted to $A$ is countable.  So if $A$ is an interval, then  range of $f$ restricted to $A$  is  a constant function.

(2). Fix an enumeration of nonempty $G_{\delta}$-null sets $\{G_{\alpha}\}_{\alpha<\aleph_1}$ and  all the reals $\{y_{\alpha}\}_{\alpha<\aleph_1}$. We define $f$ and $\{\beta_{\alpha}\}_{\alpha<\aleph_1}$ by induction on $\alpha$.

\bigskip

At stage $0$, define $f(x)=y_0$ for any $x\in G_0$. Define $\beta_0=0$.

At stage $\alpha<\aleph_1$, let $\beta_{\alpha}$ be the least ordinal $\gamma$ so that $G_{\gamma}\setminus \bigcup_{\alpha'<\alpha}(\bigcup_{\gamma'\leq \beta_{\alpha'} }G_{\gamma'})$ is uncountable. Define $f(x)=y_{\alpha}$ for any $x\in G_{\gamma}\setminus \bigcup_{\alpha'<\alpha}(\bigcup_{\gamma'\leq \beta_{\alpha'} }G_{\gamma'})$.

\bigskip

Clearly the range of $f$ is $\mathbb{R}$. Moreover, for any $\alpha<\aleph_1$, $f^{-1}(y_{\alpha})=G_{\beta_{\alpha}}\setminus \bigcup_{\alpha'<\alpha}(\bigcup_{\gamma'\leq \beta_{\alpha'} }G_{\gamma'})$ is an uncountable Borel null set.  Now for any null set $A$, there must be some $\alpha<\aleph_1$ so that $A\subseteq G_{\alpha}$. By the construction, $f(A)\subseteq f(G_{\alpha})\subseteq \{y_{\beta}\mid \beta\leq \alpha\}$ is a countable set.

(3). Fix a real $x$ so that $f$ is computable in $x$ restricted to $A$.  Fix a $2$-$x$-random real $r\in A$. Then $f(r)\leq_T x\oplus r$.  But $f(r)$ cannot be $2$-$x$-generic (see \cite{NST}). So the range  $f$ restricted to $A\cap\{r\mid r \mbox{ is  a }2-x\mbox{-random}\}$ is meager. But $A\setminus\{r\mid r \mbox{ is  a }2-x\mbox{-random}\}$ is a null set. So, by the assumption on $f$, the range of $f$ restricted to $A$ is meager.

(4). Suppose that $f$ is measurable function and maps a null set to a meager set. Without loss of generality, we may assume that the domain of $f$ is $[0,1]$. Then there is a sequence closed sets $\{F_n\}_{n\in \omega}$ so that $[0,1]\setminus \bigcup_{n\in \omega}F_n$ is null and $f$ restricted to $F_n$ is continuous for every $n$.   By (3), the range of $f$ restricted to $F_n$ is a meager set. So the range of $f$ restricted to $\bigcup_{n\in \omega} F_n$ is also a meager set. Note that $[0,1]\setminus \bigcup_{n\in \omega}F_n$ is null. So the range of $f$ is meager.  In particular, if $f$ is continuous with the property, then $f$ is constant.

(5). This is dual to (4).
\end{proof}

\section{Outlook}
\label{sec:outlook}
The most prominent avenue of future research seems to be the resolution of Question \ref{question:delta11}, asking for a separation (or equivalence proof) of $\Delta^1_1$-randomness reflection and $\Delta^1_1(\KO)$-randomness reflection. There are a few further aspects that merit further investigation, though.

\paragraph*{Topological properties}
While we have not been systematic in exploring the impact of topological properties of the domain (and maybe codomain) of the functions we explore, we observe that our proofs differ in the requirements they put on the spaces involved. {For example, the majority of the arguments presented in Sections }\ref{sec:mlr} and \ref{sec:fibers} {are relying just on the theory of randomness, and are thus applicable to any space where randomness works as usual (see }\cite{hoyrup5}).
In Section \ref{sec:kurtz}, (local) compactness of the domain is a core ingredient in our arguments.
In Section \ref{sec:boundedvariation} we do use particular properties of the reals, in particular connectedness. Further investigation of how topological properties of spaces relate to how randomness reflection behaves for functions on them seems warranted.

\paragraph*{Formalizing randomness reflection}
With the exception of Theorem \ref{theo:generichardness}, we have only considered symmetric notions of randomness reflection: Whenever $f(x)$ is random in some sense, we demand that $x$ is random in the very same sense. While this seems natural, a downside is that we do not get trivial implications between different notions of $\mathbf \Pi^0_2$-type randomness reflection.
We could consider the full \emph{square} of reflection notions, $(K,L)$-randomness reflection being that whenever $f(x)$ is $K$-random, then $x$ is $L$-random for randomness notions $K$,$L$. An extremal version also makes sense, where we just ask for when the image of all non-randoms under $f$ has positive measure. Whenever the latter property holds for some randomness notion $L$, then $f$ cannot have $(K,L)$-randomness reflection for any randomness notion $K$ at all. We typically prove non-randomness reflection in this manner.

It seems too early to pass judgement on what precise formulations of randomness reflection will ultimately be the most fruitful.

\paragraph*{Functions beyond measurability}
So far, the most general class of functions we considered for Luzin's (N) were the measurable functions. If we consider unrestricted functions in full generality, it is unsurprising that we quickly move beyond the confines of ZFC. For example, we are wondering whether Corollary \ref{corr:banach} holds for all functions having Luzin's (N)? An investigation into such questions is on its way by Yinhe Peng and the third author.

\bibliographystyle{eptcs}
\bibliography{references}

\section*{Acknowledgements}
This project was begun while all three authors were
in attendance at the Oberwolfach Seminar on
Computability Theory in January 2018.  The second
and third authors collaborated on it while attending
the IMS/NUS workshop Recursion Theory, Set Theory
and Interactions in May-June 2019.  The second author
was also partially
supported by the Cada R. and Susan Wynn Grove Early Career Professorship in Mathematics.  {The third author was partially supported by National Natural Science Fund of China, No. 11671196.}







\end{document}